\documentclass{amsart}

\usepackage{amssymb}
\usepackage{eepic}
\usepackage{color}
\usepackage{mathtools,xparse}
\usepackage[linktocpage=true]{hyperref}
\usepackage[utf8x]{inputenc}

\usepackage{comment}
\usepackage[usenames,dvipsnames,table]{xcolor}
\usepackage{tikz}

\newtheorem{thm}{Theorem}[section]
\newtheorem{cor}[thm]{Corollary}
\newtheorem{lem}[thm]{Lemma}

\newtheorem{prop}[thm]{Proposition}
\newtheorem{rem}[thm]{Remark}
\newtheorem{example}[thm]{Example}
\theoremstyle{definition}

\newtheorem{que}[thm]{Question}
\newtheorem{conj}[thm]{Conjecture}

\numberwithin{equation}{section}

\newcommand{\CHp}{\mathrm{\Lambda}_{p,\mathrm{cb}}^{\mathrm{Schur}}}
\newcommand{\Z}{\mathbf{Z}}

\newcommand{\cH}{\mathcal{H}}
\newcommand{\cM}{\mathcal{M}}
\newcommand{\cN}{\mathcal{N}}
\newcommand{\N}{\mathbf{N}}
\newcommand{\F}{\mathbf{F}}
\newcommand{\R}{\mathbf{R}}

\newcommand{\C}{\mathbf{C}}
\newcommand{\SL}{\mathrm{SL}}
\newcommand{\SO}{\mathrm{SO}}
\newcommand{\PSL}{\mathrm{PSL}}

\newcommand{\schwartz}{\mathcal{S}}

\DeclareMathOperator{\Tr}{Tr}

\DeclareMathOperator{\diag}{diag}

\newcommand\vecteur{\vec{u}}
\newcommand\fonction{f}

\newcommand{\LL}{\mathcal{L}}

\newcommand{\sphere}{\ensuremath{\mathbf{S}}}

\author{Mikael de la Salle}
\title[Kakeya conjecture and $\LL(\SL_n\Z)$]{Kakeya conjecture and High-Rank Lattice von Neumann algebras}
\date{\today}
\begin{document}
\maketitle
\begin{abstract}
  If the non-commutative $L^p$ space of $\SL_{n}(\Z)$ has the completely bounded approximation property for some non-trivial value of $p$, then some form of the Kakeya conjecture holds in dimension $d$, for all $d\leq \frac{n+1}{2}$.

  The proof relies on a spherical analogue of the following question in Euclidean harmonic analysis, that we raise and investigate: does a radially symmetric Fourier multiplier that is bounded on $L_p(\R^d)$ for some $p \neq 2$ necessarily have a continuous symbol? We leave the question open, but we prove that the primitive of such function is smooth in the sense of Zygmund, give some necessary conditions for $L_p$-boundedness in terms of Besov spaces and Littlewood-Paley decomposition for the symbol, and observe that a negative answer implies some form of the Kakeya conjecture in dimension $d$. We then provide spherical forms of these results, which, when combined with a refinement of Lafforgue's rank $0$ reduction, leads to the claimed result.
\end{abstract}

\section{Introduction}

A Banach space has the approximation property AP if the identity operator on it is a limit of finite rank maps for the compact-open topology. A notoriously difficult question has been to find spaces that fail the AP. It was raised by Banach \cite{MR1357166}, advertised by Grothendieck \cite{MR0075539,MR94682}, and solved by Enflo, who constructed the first examples \cite{MR402468}. Since then, many other examples have been found, but only one \emph{natural}: $B(\ell_2)$, the space of all bounded linear operators on a Hilbert space  \cite{MR631090}. A candidate for the first natural separable space without AP is $A=C^*_\lambda(\SL_3(\Z))$. This is still open, but Vincent Lafforgue and the author proved \cite{LafforguedlS} that it fails a natural operator-space variant, the operator approximation property OAP, where one asks for the identity on $A$ to be a limit of finite rank maps $A\to A$ for the compact-open topology on $\mathcal{K}\otimes A$, the tensor product of $A$ with the $C^*$-algebra of compact operators on a Hilbert space.

Stronger, the non-commutative $L_p$ space of the von Neumann algebra $\LL \SL_3(\Z))$ fails the OAP as soon as $\Big|\frac 1 p -  \frac 1 2\Big| > \frac 1 4$ \cite{LafforguedlS}. An equivalent way to phrase this result is the absence of $L_p$-Fourier synthesis with operator coefficients for $\SL_3(\Z)$ in this range of values of $p$: there is no way to reconstruct a general element in $L_p(\LL \SL_3\Z \otimes B(\ell_2))$ from its Fourier coefficients, see \cite[\S 2.1]{MR4680356}. With Tim de Laat we generalized this result to $\SL_{2d-1}(\Z)$, where the condition on $p$ becomes $\Big|\frac 1 p - \frac 1 2\Big| > \frac{1}{2d}$ \cite{MR3781331}. Analogous statements for non-archimedian analogues such as $\SL_{2d-1}(\F_p[T])$ have previously been obtained \cite{LafforguedlS}. As I explain for example in \cite{MR4680356}, there was a serious obstacle to make the argument work for $\Big|\frac 1 p - \frac 1 2\Big| \leq \frac{1}{2d}$, and the following question was left open.
\begin{que}\label{que:OPANCLp} Does the non-commutative $L_p$ space of $\LL \SL_{2d-1}(\Z)$ have the OAP for some $p \neq 2$? For all $\Big|\frac 1 p - \frac 1 2\Big| \leq \frac{1}{2d}$?
\end{que}
The importance of this question is that a positive answer would imply that the von Neumann algebras $\LL \PSL_n \Z$ and $\LL \PSL_m(\Z)$ are not isomorphic as soon as $n$ and $m$ are far apart (resp $|n-m|\geq 2$), as predicted by Connes' famous rigidity conjecture.

It was Javier Parcet who first made the observation that the condition relying $p,d$ in this question are identical to the conditions appearing in the Bochner-Riesz conjecture in Euclidean harmonic analysis. This numerical coincidence motivated some activity trying to attack Question~\ref{que:OPANCLp} with ideas originating from Euclidean harmonic analysis \cite{ParcetRicarddlS,MR4660138,MR4882285}, see also the surveys \cite{MR4680356,parcet2025impactschurmultipliersharmonic}. In this paper we go one step further by proving the first formal implication with the realm of conjectures around the Bochner-Riesz/Kakeya conjecture.

A subset of the Euclidean space $\R^d$ is a \emph{Kakeya set} if it contains a unit segment in every direction. Besicovich showed that for $d\geq 2$, Kakeya sets exist, that are small in the sense that they have Lebesgue measure $0$. But the Kakeya conjecture predicts that they are still quite large in the sense that their dimension is $d$. They are various precise conjectures depending on what sense we mean dimension: Minkowski upper or lower, Hausdorff... They can all be expressed combinatorically in terms of configuration of tubes in $\R^d$. The precise form of the conjecture we will work with is the following. A $\delta$-tube $R$ in $\R^d$ is the image of $[0,1]\times B_{\R^{d-1}}(0,\delta)$ by an affine isometry of $\R^d$; in that case we note $\overline{R}$ the image of $[2,3]\times B_{\R^{d-1}}(0,\delta)$. Define $\fonction_d(\delta)$ as the infimum of the real numbers $\varepsilon>0$ such that there is a finite family of $\delta$-tubes $\{R_j\}$ such that the $\overline{R}_j$ are pairwise disjoint but
  \begin{equation*} \frac{\Big| \bigcup_j R_j \Big|}{\sum_j |R_j|} \leq \varepsilon.
  \end{equation*}
\begin{conj}\label{conj:Kakeya} For every $\varepsilon>0$, $\sup_{\delta > 0} \fonction_d(\delta)\delta^{-\varepsilon}=\infty$.
\end{conj}
The Kakeya conjecture is a classical theorem in dimension $2$ \cite{MR272988}, and a solution (including Conjecture~\ref{conj:Kakeya}) has recently been given in dimension $3$ in a celebrated breakthrough by Wang and Zahl \cite[Theorem 1.12]{wang2025volumeestimatesunionsconvex}; in higher dimension it remains open. There are lots of interesting survey texts on the subject, for example \cite{hickman2025kakeyaconjecturedoescome}. Although we do not prove a formal implication, we think of Conjecture~\ref{conj:Kakeya} as a variant of the conjecture that Kakeya sets have Minkowski upper dimension $d$, which is equivalent to the same condition but only for families of $\delta$-tubes whose direction form a maximal $\delta$-separated subset of $\sphere^{d-1}$, see the discussion in \S~\ref{subsec:besicovich}. As pointed out to me by Hong Wang, Conjecture~\ref{conj:Kakeya} is also closely related to the tube doubling conjecture (\cite[\S 1.6]{wang2025volumeestimatesunionsconvex}).

The next theorem is the the main result of this paper. It explains why Question~\ref{que:OPANCLp} is difficult.

\begin{thm}\label{thm:main} Let $d \geq 2$. If $L^p(\LL \SL_{2d-1}(\Z))$ has the operator space approximation property for some $p \neq 2$, then Conjecture~\ref{conj:Kakeya} holds in dimension $d$.
\end{thm}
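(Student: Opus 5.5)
We argue by contraposition. Suppose Conjecture~\ref{conj:Kakeya} fails in dimension $d$: there is $\varepsilon_0>0$ with $\fonction_d(\delta)\le C\delta^{\varepsilon_0}$ for all $\delta$. The plan has three stages. First, this tube configuration gives a Fefferman-type construction that forbids a certain regularity of $L_p$-bounded radial multipliers. Second, a spherical analogue of this statement transfers it to $\SO(d)$-bi-invariant (or $K$-bi-invariant) multipliers on a suitable Lie group. Third, the OAP for $L^p(\LL\SL_{2d-1}(\Z))$ produces, through a refinement of Lafforgue's rank $0$ reduction, multipliers that are bounded yet too irregular, which is a contradiction.

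\emph{Euclidean step.} Let $m$ be a radial symbol with $T_m$ bounded on $L_p(\R^d)$, $p\neq 2$. We will show that if $\fonction_d(\delta)\lesssim\delta^{\varepsilon_0}$, then $m$ must be continuous, with a quantitative modulus: $|m(r)-m(s)|\lesssim |r-s|^{\eta}\|T_m\|$ for some $\eta=\eta(p,\varepsilon_0)>0$ on $r,s\in[1,2]$.

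The argument follows Fefferman's ball multiplier proof. Assume $m$ jumps, or oscillates in a Besov sense, at scale $\delta$ near radius $r$. Take the tubes $R_j$ with disjoint $\overline R_j$ and small union. Test $T_m$ on $\sum_j e^{i r\xi_j\cdot x}\mathbf 1_{R_j}$, where the $\xi_j$ are the directions of the tubes. The jump of $m$ at radius $r$ acts on each modulated tube like a half-plane multiplier in direction $\xi_j$, so it transports mass from $R_j$ to $\overline R_j$. The disjointness of the $\overline R_j$ then bounds the output norm from below. Meanwhile $L_p$-concentration of the input on the small union, with the ratio $\lesssim\delta^{\varepsilon_0}$, bounds the input norm from above. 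Using $p>2$ (or duality for $p<2$) gives $\|T_m\|\gtrsim\delta^{-c\varepsilon_0}\times(\text{size of jump at scale }\delta)$. Summing over a Littlewood--Paley decomposition of the symbol should yield the Hölder bound. The reason this works is that a failure of the Kakeya conjecture is exactly a Besicovitch-type configuration with a polynomial gain.

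\emph{Spherical step.} Prove the analogous statement for functions on $\sphere^{d-1}$, or for $\SO(d)$-bi-invariant multipliers. The local tangent-space approximation of the sphere reduces this to the Euclidean step at small scales. Concretely: a bounded multiplier whose symbol depends only on the angle $\langle x,y\rangle$ must be Hölder continuous in that angle, locally uniformly away from the poles.

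\emph{Lattice step.} Assume $L^p(\LL\SL_{2d-1}(\Z))$ has the OAP. The standard argument of \cite{LafforguedlS,MR3781331} then gives finitely supported approximating Schur/Fourier multipliers, uniformly completely bounded on $S_p$. Passing to $\SL_{2d-1}(\R)$ via transference and averaging over $K=\SO(2d-1)$ gives $K$-bi-invariant multipliers converging to $1$. Lafforgue's rank $0$ reduction restricts these to an embedded copy of $\SO(d)$ acting on $\sphere^{d-1}$, sitting in the block embedding $\SL_d\times\SL_{d-1}\subset\SL_{2d-1}$. This restriction produces functions $\varphi(\langle x,y\rangle)$ that are completely bounded Schur multipliers on $S_p(L_2(\sphere^{d-1}))$ and are bounded on the correct Fourier-multiplier side. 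The spherical step makes them uniformly Hölder in the angle, and this regularity is what is needed in the previous arguments to show that $\varphi(\delta)$ converges as $\delta\to 0$ uniformly along the family. That forces the limit at infinity of the $K$-bi-invariant functions to be stable. This contradicts their convergence to the compactly supported approximations tending to $1$, as in \cite{LafforguedlS}.

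The refinement of the rank $0$ reduction is needed because previously the range $|\frac1p-\frac12|>\frac1{2d}$ came from Stein--Tomas type decay, which we no longer have. We must replace that decay by regularity derived from the $L_p$-boundedness alone, and we must keep the dimension $d$ of the sphere matching the $2d-1$ of the group.

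\emph{Main obstacle.} I expect the hardest part to be the Euclidean/spherical step, specifically getting a \emph{quantitative}, Hölder-type modulus from an arbitrary tube family in the form of Conjecture~\ref{conj:Kakeya}. The tubes there have arbitrary directions rather than a $\delta$-separated set. For this reason I would first work with Besov-type necessary conditions on the Littlewood--Paley pieces of the symbol, and then sum them.
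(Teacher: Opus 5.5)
\textbf{The spherical step uses the wrong sphere.} Your architecture is the paper's: a Fefferman/Meyer tube argument gives Littlewood--Paley (hence H\"older) regularity of the symbol, a spherical Schur-multiplier version follows, and rank $0$ reduction finishes, where H\"older continuity at $\langle\xi,\eta\rangle=0$ is more than enough. But the sphere you chose breaks the implication you want.

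Near two points at angle $\theta$, an angle-dependent Schur multiplier on $\sphere^k$ behaves like a family of directional Fourier multipliers on the tangent space $\R^k$. So the only tube configurations you can test it against live in $\R^k$, and its regularity is governed by $\fonction_k$. With $\SO(d)$ acting on $\sphere^{d-1}$, you would need $\fonction_{d-1}$ small, i.e.\ failure of Kakeya in dimension $d-1$. This does not follow from failure in dimension $d$: a configuration in $\R^{d-1}$ yields one in $\R^d$ by taking products, not conversely, so the trivial comparison $\fonction_d\lesssim\fonction_{d-1}$ goes the wrong way. As written, your argument proves at best ``OAP $\Rightarrow$ Kakeya in dimension $d-1$''.

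The case $k=1$ shows how tight this link is. On $\sphere^1$ the symbol $m(\cos(\alpha-\beta))$ is Toeplitz, there is no Kakeya phenomenon, and by transference the indicator of an arc gives an $S_p$-bounded multiplier with discontinuous $m$.

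The sphere you need is $\sphere^d=\SO(d+1)/\SO(d)$. This is what rank $0$ reduction in $\SL_{2d-1}$ actually provides: Theorem~\ref{thm:rank0reductionImproved} is stated for $MS_p(L_2(\sphere^d))$, and for $d=2$ it is $\SO(3)/\SO(2)\simeq\sphere^2$ inside $\SL_3$. This is the source of the matching between $\SL_{2d-1}$, $\R^d$ and the threshold $\frac1{2d}$. Your own remark that the sphere's dimension must match $2d-1$ is right; the choice $\sphere^{d-1}$ contradicts it.

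\textbf{The spherical step is also missing its mechanism.} Most of the work lies here, and ``local tangent-space approximation'' does not supply it. From a single Schur multiplier on $\sphere^d$ you must extract a square-function inequality for the $N$ rotated directional multipliers, with quantitative errors. The paper does this in four moves:
\begin{itemize}
\item It uses charts $x\mapsto\frac{e_{d+1}+x/r}{|e_{d+1}+x/r|}$ at two points at angle $\theta$ (Lemma~\ref{eq:Taylor_expansion_in_r}).
\item It applies Schur--Fourier transference with cut-offs $\varphi_R$ at an intermediate scale $R$.
\item It puts all the $j$-dependent rotations on one side, so that the block multiplier is a restriction of the spherical one (Lemma~\ref{lem:square_inequality_Schur} via Lemma~\ref{lem:restriction}).
\item It balances the resulting errors $\frac1{\delta R}+\frac{R^2}{r\sin\theta}$ by choosing $R=(r\sin\theta/\delta)^{1/3}$.
\end{itemize}

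Relatedly, the tube width $\delta$ is not the scale $1/r$ at which the symbol is probed. The curvature error forces $r\delta^2\gg 1$ in the Euclidean case and $r\delta^2\sin\theta\gg1$ on the sphere. Under $\fonction_d(\delta)\lesssim\delta^{\varepsilon_0}$, a polynomial coupling such as $\delta=r^{-1/3}$ still yields H\"older bounds. So this conflation is harmless once the correct sphere and the transference are in place.
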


More precisely, we prove that $\frac{\fonction_d(\delta)^{|\frac 1 p - \frac 1 2|} \log |\log \delta|}{\delta}$ is not integrable at $0$, see Corollary~\ref{cor:Spbddness_implies_integrable_modulus}. This theorem is proved by combining several independent results that we now describe.

\subsection{Rank $0$ reduction}

The starting point is the idea that I call \emph{rank $0$ reduction} in \cite{MR4680356} and which originates in Vincent Lafforgue's work on strong property (T) \cite{Lafforgue08}. It is an idea that allows one to study complicated groups such as $\SL_N(\R)$ or arithmetic subgroups therein through their compact subgroups. The form that is relevant here was understood in \cite{LafforguedlS} for $\SL_3$ and then later extended \cite{MR3781331} for $\SL_{N\geq 3}$, and for which a concise exposition is given in \cite{MR4680356} (see also \cite{MR3047470,MR3035056} for other simple Lie groups of rank $\geq 2$). It provided the following relationship between the structure of high rank lattice von Neumann algebras and harmonic analysis on the sphere. Define the operator $T_\delta$ on $L_2(\sphere^d)$ by $T_\delta f(x)$ is the average of $f$ on $\{y \in\sphere^d, \langle x,y\rangle=\delta\}$. If $d, p$ are such that $T_\delta$ belongs to the Schatten class $S_p$ (that is $\Tr |T_\delta|^p<\infty$) and the map $\delta\in (-1,1) \mapsto T_\delta \in S_p$ is Hölder-continuous, then rank $0$ reduction allows us to deduce some strong non-approximability of the von Neumann algebra of $\SL_{2d-1}(\Z)$: the associated non-commutative $L_p$ space does not have the OAP. The same holds for other lattices in $\SL_{2d-1}(\R)$, and any lattice in a connected simple Lie group of rank $\geq \max(9,2d-1)$. It was proven in \cite{LafforguedlS,MR3781331} that this condition on the map $\delta\mapsto T_\delta$ holds as soon as $p > \frac{2d}{d-1}$. Moreover, this condition is also sufficient (in case of $d=2$, this was claimed without proof in \cite[end of \S~2.3]{MR4680356}, and proven in much greater generality of arbitrary rank $1$ symmetric spaces of compact type by Guillaume Dumas in \cite[Theorem 3.14]{zbMATH07962858}). I have regarded this fact as an indication that rank $0$ reduction cannot be used to say anything for $2<p \leq \frac{2d}{d-1}$.

The next result illustrates that I was wrong. It improves on \cite{LafforguedlS,MR3781331} in two ways. The first improvement is the straightforward but important observation that the argument in  \cite{LafforguedlS,MR3781331} did not really use the continuity of the map $\delta \mapsto T_\delta$, but rather a consequence which is that some class of Schur multipliers have H\"older-continous symbols. Recall that for a measure space $(X,\mu)$ and a bounded measurable function $m:X\times X\to \C$, the Schur multiplier with symbol $m$ is the operator on $S_2(L_2(X,\mu))$ sending the operator $A$ with kernel $(A_{x,y})_{x,y\in X}$ to the operator with kernel $(m(x,y) A_{x,y})_{x,y \in X}$. If it extends (necessarily uniquely) to a bounded operator on the Schatten class $S_p$, we write $\|m\|_{MS_p(L_2(X,\mu))}$ for its norm. The second improvement is that the needed regularity is much weaker than H\"older-continuity: for example a modulus of continuity of order $|\log \delta|^{-\alpha}$ or even $(\log |\log \delta|)^{-1-\alpha}$ with $\alpha>0$ is enough.

\begin{thm}\label{thm:rank0reductionImproved} Let $1 \leq p \leq \infty$ and $d \geq 2$. Assume that there is a non-decreasing function $\varepsilon:[0,1]\to [0,\infty)$ such that:
  \begin{itemize}
  \item for every continuous function $\varphi:[-1,1]\to \R$ and every $\delta \in [-1,1]$,
  \[ |\varphi(0) - \varphi(\delta)| \leq \varepsilon(|\delta|) \|\varphi(\langle \cdot,\cdot\rangle)\|_{MS_p(L_2(\sphere^{d}))},\]
  \item $\delta \mapsto \frac{\varepsilon(\delta)}{\delta |\log \delta|}$ is integrable at $0$.
  \end{itemize}

  Then $L_p(\LL\Gamma)$ does not have the operator space approximation property for $\Gamma= \SL_{2d-1}(\Z)$, a lattice in $\SL_{2d-1}(\R)$, or (if $d=2$ or $d\geq 5$) in a connected simple Lie group of rank $\geq 2d-1$.
  \end{thm}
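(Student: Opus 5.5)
We follow the rank $0$ reduction scheme of \cite{LafforguedlS,MR3781331}, as presented in \cite{MR4680356}, and replace its Hölder-continuity input by the hypothesis of the theorem. Suppose $L_p(\LL\Gamma)$ has the OAP. By the standard transference (\cite[\S 2]{LafforguedlS}), there is then a net of finitely supported functions $\psi_i$ on $\Gamma$ converging pointwise to $1$ with $\sup_i\|\psi_i\|_{MS_p(\ell_2\Gamma)}<\infty$; by the usual averaging tricks they can be replaced by functions on $G=\SL_{2d-1}(\R)$ that are continuous, $K$-bi-invariant and tend to $0$ at infinity. Here $K=\SO(2d-1)$. Since $G$ contains $\Gamma$ as a lattice, the Schur norm passes from $\Gamma$ to $G$ after inducing, exactly as in the cited works. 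It therefore suffices to prove the following. \emph{If $\psi$ is a $K$-bi-invariant function on $G$ with $\|\psi\|_{MS_p(L_2 G)}\le 1$, then $\psi$ has a limit at infinity, with explicit control on the convergence.} Given this, the $\psi_i$ satisfy $|\psi_i(1)-\lim_\infty\psi_i|\le C$ uniformly; since $\psi_i(1)=1$ and $\lim_\infty\psi_i=0$, we may rescale to get a contradiction with the uniform bound, as in \cite{LafforguedlS}.

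The core is the rank $0$ reduction. Take $U\simeq\SO(d)$ embedded in a corner of $K$ (this is the reason for using $2d-1$ and not $d$), together with a family of diagonal elements $D(s,t)$. For $u\in U$, the value $\psi(D u D')$ depends only on one matrix coefficient $\langle u e_1,e_1\rangle$. Restricting the Schur multiplier to $\{(uD,\, D'v)\}$ then produces a Schur multiplier on $\sphere^{d}$ of the form $\varphi(\langle x,y\rangle)$ with $\|\varphi(\langle\cdot,\cdot\rangle)\|_{MS_p(L_2\sphere^d)}\le 1$. Here $\varphi(\delta)=\psi(k(\delta)\text{-dependent element})$, with $\varphi(0)$ and $\varphi(\delta)$ corresponding to two points of the Cartan chamber at bounded-times-$\log(1/\delta)$ distance along a chosen path. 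The hypothesis now gives $|\psi(g)-\psi(g')|\le\varepsilon(|\delta|)$ for pairs whose Cartan projections are related in this way. This replaces the step in \cite{MR3781331} where continuity of $\delta\mapsto T_\delta$ was used; the earlier argument only ever evaluated the multiplier at pairs $(0,\delta)$.

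Next I would chain these estimates along a sequence in the Weyl chamber going to infinity, first in a ``singular'' direction and then in the regular directions, as in \cite{LafforguedlS}. At scale $\lambda$ in the Cartan coordinates one can take $\delta\approx e^{-\lambda}$, but moving from $\lambda$ to the next point costs an increment comparable to $\lambda$ in log-scale. Choosing dyadic steps $\lambda_{k+1}=2\lambda_k$ with $\delta_k=e^{-\lambda_k}$ makes the total variation $\sum_k\varepsilon(e^{-2^k})$. Since $\varepsilon$ is non-decreasing, this sum is bounded by a constant times $\int_0\frac{\varepsilon(\delta)}{\delta|\log\delta|}\,d\delta$: the substitution $\delta=e^{-2^s}$ gives $d\delta/(\delta|\log\delta|)=\log 2\, ds$. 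Hence the sequence $\psi(D(\lambda_k))$ is Cauchy, and the tail estimates give a limit at infinity along the whole chamber.

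The main obstacle is this bookkeeping. One must make sure that every pair of chamber points that are linked by the rank $0$ reduction indeed differ by a spherical coordinate of size $\approx e^{-\text{dist}}$ and not a larger one, so that the $\frac{1}{\delta|\log\delta|}$ integrability is exactly what is needed. One must also check that the chaining between different walls of the chamber does not lose extra factors. For the other lattices and for higher-rank simple groups ($d=2$ or $d\ge5$), I would invoke the embeddings of \cite{MR3781331,MR3047470,MR3035056} of $\SL_{2d-1}(\R)$-type configurations, with $\SO(d)$ inside $K$, unchanged.
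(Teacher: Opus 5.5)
Your overall architecture is the paper's: rank $0$ reduction to a radial Schur multiplier on the sphere, using the hypothesis only at pairs $(0,\delta)$, telescoping along a geometric progression on the ray $D(v)$, and comparing $\sum_n\varepsilon(e^{-c\rho^n v})$ with $\int_0\frac{\varepsilon(\delta)}{\delta|\log\delta|}\,d\delta$ by monotonicity.

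The genuine gap is your first step. The OAP of $L_p(\LL\Gamma)$ does \emph{not} provide finitely supported $\psi_i\to 1$ with $\sup_i\|\psi_i\|_{MS_p}<\infty$. That is a bounded approximation property (it would give $\CHp(\Gamma)<\infty$). The OAP carries no uniform bound: already for reduced group $C^*$-algebras the OAP is the Haagerup--Kraus AP, which is strictly weaker than weak amenability. So your contradiction only rules out the bounded property. It is also misplaced: the rank $0$ reduction gives no information at the identity, and $|1-0|\le C$ is not absurd; you must test $K$-averages at $D(v)$ with $v\to\infty$.

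What reaches the OAP is that \eqref{eq:bound_on_psi} holds for \emph{every} $K$-biinvariant $C_0$ multiplier in the unit ball, with a summable right-hand side independent of $\psi$. This uniformity is what \cite[\S 3]{LafforguedlS} and \cite{MR3781331} exploit. Alternatively, it is what makes the functionals $T\mapsto\sum_\gamma m_v(\gamma)\tau(T(\lambda_\gamma)\lambda_\gamma^*)$ at the end of Section~\ref{sec:AP_Schur} Cauchy in the dual of the completely bounded maps, so that their limit separates the identity from finite-rank maps. Your uniform ``explicit control'' can be fed into that mechanism, but not into a bounded-net argument.

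The ``bookkeeping'' you postpone is the actual content of the proof of \eqref{eq:bound_on_psi}, and parts of your sketch fail as stated.

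The compact block must be $\SO(d+1)$, so that the multiplier lives on $\SO(d+1)/\SO(d)=\sphere^d$, where the hypothesis is assumed. With $\SO(d)$ you land on $\sphere^{d-1}$, where it does not apply.

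The key computation (for $d=2$, exponent notation) is $\varphi(0)=\psi(\diag(-t/2,-t/2,t))$ and $\varphi(\delta)=\psi(\diag(r,s,t))$ with $\delta=\frac{\sinh(r+t/2)}{\sinh(-3t/2)}\le e^{t-s}$. So the smallness of $\delta$ is governed by the distance $s-t$ to a wall, not by the distance between the two chamber points.

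Consider a step $D(v)\to g\to D(\rho v)$ made of one move fixing $t$ and one (by symmetry) fixing $r$. The intermediate point $g$ must stay at distance $\gtrsim v$ from the walls. Your dyadic ratio $\rho=2$ forces $g=\diag(2v,-v,-v)$, or $\diag(v,v,-2v)$ in the other order. Either lies on a wall and gives only $\varepsilon(1)$. The paper takes $g=\diag(\frac32v,-\frac v2,-v)$, obtaining $|\psi(D(v))-\psi(D(\frac32v))|\le 4\varepsilon(e^{-v/2})$. For general $d$ it uses the ratio $\frac{d+1}{d}$ with $d-1$ consecutive moves, giving $4(d-1)\varepsilon(e^{-v/d})$.

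Finally, no limit along the whole chamber is needed. For $\psi\in C_0$, telescoping along $D((1+c)^nv)$ directly gives $|\psi(D(v))|\le C\sum_{n\ge 1}\varepsilon(e^{-(1+c)^nv/C})$, which tends to $0$ uniformly in $\psi$.
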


Theorem~\ref{thm:rank0reductionImproved} opens the possibility to prove the lack of OAP of $L_p(\LL \Gamma)$ for new values of $p$ by proving continuity of Schur multiplier symbols by other means than by considering the spectral decomposition of the operators $T_\delta$. This is exactly what we will do in the rest of the paper.

\subsection{Continuity for the symbol of radial Schur and Fourier multipliers}
Theorem~\ref{thm:rank0reductionImproved} motivates the following question, to which we devote much of the paper.
\begin{que}\label{question:mainSchur} Is there a discontinuous function $m:(-1,1)\to \R$ such that the Schur multiplier with symbol $(\xi,\eta) \in \sphere^d \times \sphere^d \mapsto m(\langle \xi,\eta\rangle)$ is $S_p$-bounded for some $p \neq 2$?
\end{que}

It is now understood that there are strong analogies between Schur multipliers and Fourier multipliers, but Schur multipliers are often more difficult to study. A recent example is \cite{MR4882285}, where, using Schur multipliers, we extended Fefferman's ball multiplier theorem \cite{zbMATH03370942} to arbitrary Lie groups and characterized the idempotent $L_p$-bounded Fourier multipliers in terms of one-dimensional dynamics, see also the survey \cite{parcet2025impactschurmultipliersharmonic}. A Fourier multiplier is a linear map sending a function $f$ on $\R^d$ to the "function" whose Fourier transform is the pointwise multiplication $\varphi \hat f$, for some function $\varphi \colon \R^d\to \C$ called the symbol of the Fourier multiplier. It is said to $L_p$-bounded if it maps $L_p(\R^d)$ to itself, and radial if $\varphi(\xi)$ only depends on the Euclidean norm $|xi|$. Here and throughout the paper we use the convention $\hat f(\xi) = \int f(x) e^{-2i\pi \langle x,\xi\rangle} dx$ for the Fourier transform.

This motivates, as a baby case of Question~\ref{question:mainSchur}, the following question. It is very natural from the non-commutative perspective, but it does not seem to have been extensively studied.
\begin{que}\label{question:mainfourier} Is there a discontinuous function $m:(0,\infty)\to \R$ such that the Fourier multiplier with symbol $\xi \in \R^d  \mapsto m(|\xi|)$ is bounded on $L_p(\R^d)$ for some $p \neq 2$?
\end{que}

The study of radial multipliers that are $L_p$-bounded is a classical subject that has been much investigated, and it is well understood that there are two very different regimes:

\begin{itemize}
  \item The regime when $p$ is far from $2$ (that is $\big| \frac 1 p - \frac 1 2\big| > \frac 1 {2d}$). This regime has been very much investigated. Many questions remain but, at least conjecturally, the picture is rather clear (and important cases of the conjecture have been proved \cite{MR2784663}). For example, in the simple case when the support of $m$ is a compact subset of $(0,\infty)$ : if $p<2$ the $L_p$ boundedness of the radial Fourier multiplier with symbol $m(|\cdot|)$ should be equivalent to the (apparently much weaker condition) that $m(|\cdot|)$ is the Fourier transform of an $L_p$ function. In particular, the $L_p$-boundedness of the radial Fourier multiplier with symbol $m(|\cdot|)$ forces some Hölder continuity of $m$ outside of $0$, and Question~\ref{question:mainfourier} has a negative answer in this regime.

 \item The regime when $p$ is close to $2$ (that is $\big| \frac 1 p - \frac 1 2\big| \leq \frac 1 {2d}$). Here the main result that I am aware of is Fefferman's celebrated Ball multiplier theorem \cite{zbMATH03370942}, which in our setting says that the indicator function of the euclidean ball is not an $L_p$-bounded Fourier multiplier for any $p\neq 2$.
\end{itemize}

The main results of this paper address Questions~\ref{question:mainfourier} and~\ref{question:mainSchur}. While they are stated for arbitrary values of $p$ and $d$, they are mostly relevant in the regime where $p$ is close to $2$.

The first main result is a qualitative answer to Question~\ref{question:mainfourier}. Recall that a function $F:\R\to\C$ is smooth in Zygmund's sense  \cite{zbMATH03096429} if
\[ \lim_{h\to 0} \sup_{t \in \R} \frac{|F(t+h)+F(t-h)-2F(t)|}{h} = 0.\]
\begin{thm}\label{thm:mainqualitative} Let $m:(0,\infty) \to \R$ a bounded measurable function. Assume that, for some $d \geq 2$ and some $p\neq 2$, the Fourier multiplier with symbol $m(|\cdot|)$ is bounded on $L_p(\R^d)$. Then the function $F(t) = \int_0^t m(\exp(s)) ds$ is a smooth function in Zygmund's sense.% Equivalently $\varphi:t\in \R \mapsto m(\exp t)$ belongs to $b_{\infty,\infty}^0(\R)$.
\end{thm}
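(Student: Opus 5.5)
The plan is to argue by contradiction. We combine a blow-up (compactness) argument, which turns a failure of Zygmund smoothness into a non-trivial one-dimensional multiplier acting in every direction, with Fefferman's Besicovitch-set argument from the ball multiplier theorem. Set $\varphi(t)=m(e^t)$. Failure of Zygmund smoothness of $F$ means there are $c>0$, $h_k\to 0$ and $t_k\in\R$ with
\[ \Big|\frac1{h_k}\int_{t_k}^{t_k+h_k}\varphi-\frac1{h_k}\int_{t_k-h_k}^{t_k}\varphi\Big|\geq c. \]
Put $r_k=e^{t_k}$ and $\psi_k(s)=\varphi(t_k+h_k s)$. These functions are bounded by $\|m\|_\infty$, so after passing to a subsequence $\psi_k\to\psi$ weak-$*$ in $L_\infty(\R)$. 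The displayed inequality passes to the limit and gives $|\int_0^1\psi-\int_{-1}^0\psi|\geq c$. In particular $\psi$ is not a.e. constant, even after mollification by a suitable fixed bump.

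The first main step is to show that $\psi(\langle u,\cdot\rangle)$ is an $L_p(\R^d)$-bounded Fourier multiplier, with norm at most $\|m(|\cdot|)\|_{M_p}$, for every unit vector $u$; by rotation invariance it suffices to take $u=e_1$. We translate the symbol in frequency by $r_k e_1$, which preserves the norm since it is conjugation by a modulation, and then dilate isotropically by $r_kh_k$, also norm-preserving. The resulting symbol is
\[ \eta\mapsto m\big(r_k|e_1+h_k\eta|\big)=\varphi\big(t_k+\log|e_1+h_k\eta|\big)=\psi_k\big(\eta_1+O(h_k|\eta|^2)\big). \]
Multiplier norms are lower semicontinuous under a.e. or weak-$*$ convergence of symbols, tested against Schwartz functions. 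Hence the limit symbol $\psi(\eta_1)$ is a multiplier. Some care is needed to justify weak-$*$ convergence of the composed symbols: one pairs against $\hat f\,\overline{\hat g}$ with $f,g$ Schwartz and uses that the change of variables $\eta\mapsto \eta_1+O(h_k|\eta|^2)$ is a smooth perturbation of the projection on compacts. Applying the same step at other base radii, we also get $\psi(\langle u,\cdot\rangle - a)$ for translations $a$, which is just a modulation.

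The second step is a Meyer-type vector-valued lemma. For unit vectors $u_1,\dots,u_N$ and functions $f_j$, we want
\[ \Big\|\Big(\sum_j|T_{\psi(\langle u_j,\cdot\rangle)}f_j|^2\Big)^{1/2}\Big\|_p\lesssim \|m(|\cdot|)\|_{M_p}\Big\|\Big(\sum_j|f_j|^2\Big)^{1/2}\Big\|_p. \]
The proof should follow Fefferman and Meyer. By Marcinkiewicz--Zygmund, the bounded radial multiplier has an $\ell_2$-valued extension. We then modulate $f_j$ to frequency $Ru_j$ and rescale, so that near $Ru_j$ the radial symbol, after the blow-up above with the same sequence $(r_k,h_k)$, converges to $\psi(\langle u_j,\cdot\rangle)$ simultaneously for all $j$. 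A limiting argument finishes this step.

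The final and hardest step is the Besicovitch contradiction. In dimension $d\geq 2$, for every $\varepsilon>0$ there are $\delta$-tubes $R_j$ with pairwise disjoint $\overline R_j$ and $|\bigcup R_j|\leq\varepsilon\sum|R_j|$. Take $f_j$ to be modulated bumps adapted to $R_j$, oriented along $u_j$. Fefferman's argument uses half-plane multipliers, which translate mass from $R_j$ into $\overline R_j$. Here we only know that $\psi$ is non-constant, so I would first replace $\psi$ by $\psi*\rho$ minus a suitable constant. This makes its one-dimensional kernel $\check\psi$ a fixed nonzero distribution, not a multiple of $\delta_0$. Then, by choosing the tube length scale (i.e.\ dilating), we arrange that $T_{\psi(\langle u_j,\cdot\rangle)}f_j$ has modulus $\gtrsim 1$ on a fixed proportion of $\overline R_j$. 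The non-constancy coming from the Zygmund failure is exactly what guarantees some displacement of mass at some scale, though it may be only at scale $1$ along the tube. Hölder's inequality on $\bigcup R_j$ then gives $\sum|R_j|\lesssim \varepsilon^{|\frac12-\frac1p|}\sum|R_j|$ (or its dual for $p<2$), a contradiction as $\varepsilon\to0$.

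The main obstacle is making the mass-transfer estimate work for an arbitrary non-constant bounded $\psi$ rather than a jump. I would first try to exploit the dilation freedom: $\check\psi\neq c\,\delta_0$, so some Schwartz $g$ satisfies $\check\psi*g\neq cg$. Tubes of length $L\gg1$ can then be used, with the bump placed so that the output is large on the translate.
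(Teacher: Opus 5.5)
Your first two steps are sound and essentially the paper's. The opening contradiction argument is a direct proof of the one direction of Proposition~\ref{prop:characterization_of_binftyinfty0} that is actually needed. The blow-up of $m(r_k|e_1+h_k\eta|)$ is Lemma~\ref{lem:WOT-convergence}, and the common-dilation, different-modulation square-function trick is Lemma~\ref{lem:Meyer}.

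The gap is the last step, which you flag yourself. The claim that, at some scale, $T_{\psi(\langle u_j,\cdot\rangle)}f_j$ carries a fixed proportion of its mass on $\overline{R_j}$ while $f_j$ lives on $R_j$ is the whole content of the theorem, and your justification does not give it. The H\"older argument on $\bigcup_j R_j$ needs two things:
\begin{itemize}
\item $f_j$ supported in $R_j$, so a compactly supported profile $f$ on an interval $I$ (not Schwartz or modulated bumps);
\item $T_\psi f\neq 0$ on the disjoint translate $I+2|I|$, i.e.\ transport of mass over a distance comparable to $|I|$.
\end{itemize}
Knowing $\check\psi\neq c\,\delta_0$, or $\check\psi*g\neq cg$ for some Schwartz $g$, only says that $T_\psi$ is not a multiple of the identity. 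For instance, $\check\psi=\delta_0'$ is not a multiple of $\delta_0$, yet the corresponding operator is local at every scale. Likewise, if $\check\psi$ is compactly supported near $0$, your long tubes $L\gg 1$ see nothing on the translates. Mollifying $\psi$ and subtracting a constant changes none of this.

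What is missing is a one-dimensional rigidity statement, and the paper proves it in contrapositive form.
\begin{enumerate}
\item Lemma~\ref{lem:supportTm} shows, via Besicovitch and H\"older, that $T_\psi f$ vanishes on $I\pm 2|I|$ for every $f$ supported in any interval $I$. This uses that $T_{\psi(\langle\cdot,u_j\rangle)}$ acts only in the $u_j$-variable, so $T_jf_j$ is exactly $(T_\psi f)(\langle x,u_j\rangle-a_j)$ times the transverse profile.
\item Enlarging $I$ on one side and cutting $f$ into short pieces upgrades this to $\mathrm{supp}\,T_\psi f\subset I$.
\item Lemma~\ref{lem:peetre} then forces $\psi$ to be constant.
\end{enumerate}

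Your direct route can also be completed, but the decisive input is the \emph{boundedness} of $\psi$, not mere non-constancy. A distribution supported in $\{0\}$ has polynomial Fourier transform, so, since $\psi$ is bounded and non-constant, $\mathrm{supp}\,\check\psi$ contains some $a\neq 0$. The steps are then:
\begin{itemize}
\item Dilate so that $|a|=2$.
\item Take $f$ an approximate identity centred at $\frac12$, supported in $(0,1)$, and $g$ a bump near $a+\frac12$, supported in $(2,3)$ or $(-2,-1)$.
\item Then $\int (T_\psi f)\,g=\langle\check\psi,\tilde f*g\rangle\neq 0$ with $\tilde f(x)=f(-x)$, because $\tilde f*g$ approximates a test function near $a$ on which $\check\psi$ is nonzero.
\item The case $a<0$ is handled by using the directions $-u_j$ in the multipliers, as in the footnote to Lemma~\ref{lem:supportTm}.
\end{itemize}
So the tube scale is dictated by a point of $\mathrm{supp}\,\check\psi\setminus\{0\}$, not by taking $L\gg 1$. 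With this input, the product structure of $T_jf_j$ and your H\"older step close the argument.
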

The prototypical example of a bounded measurable function whose primitive is not smooth is the indicator function of $(0,1)$, so this theorem generalizes Fefferman's ball multiplier theorem. The proof is an adaptation of its proof. It also uses a functional analytic characterization of functions satisfying the conclusion of the theorem (Proposition~\ref{prop:characterization_of_binftyinfty0}). It is worth pointing out that this theorem does not answer Question~\ref{question:mainfourier} because there are discontinuous bounded measurable functions with smooth primitive, see Example~\ref{ex:sinloglog}. This example indicates that candidates for Question~\ref{question:mainfourier} have to be looked among highly oscillating functions. The next question points out functions that lie at the edge of current techniques. This question is already open in dimension $2$.
\begin{que} Can a function equal to $\xi \mapsto \sin(\log \log 1/|1 - |\xi|^2| |)$ on a neighbourhood of the unit sphere define a bounded $L_p$-multiplier for some $p\neq 2$?
\end{que}

The second main result is a quantitative answer, that makes the preceding more precise. Observe indeed that bounded measurable functions with smooth primitive coincide with $L_\infty(\R)\cap b_{\infty,\infty}^0(\R)$, see Proposition~\ref{prop:characterization_of_binftyinfty0}. Recall that, if $(W_n)_{n \geq 0}$ is a Littlewood-Paley partition of unity (see Section~\ref{sec:LP}), the inhomogeneous Besov spaces on $\R$, $B_{p,q}^s(\R)$ and $b_{p,\infty}^s(\R)$ are the spaces of tempered distributions
\[ B_{p,q}^s(\R) = \big\{ f \mid (2^{ns}\|W_n\ast f\|_p)_{n\geq 0} \in \ell_q\big\},\]
and
\[ b_{p,\infty}^s(\R) = \big\{ f \mid (2^{ns}\|W_n\ast f\|_p)_{n\geq 0} \in c_0\big\}.\]

\begin{thm}\label{thm:Lpbddness_LP_quantitative} Let $1<p<\infty$ and $m:(0,\infty) \to \R$ a bounded measurable function such that $T$, the Fourier multiplier with symbol $m(|\cdot|)$ is bounded on $L_p(\R^d)$. Then $\varphi:t\in \R \mapsto m(\exp t)$ satisfies
  \[ \|W_n\ast \varphi\|_\infty \leq C_d \|T\|_{L_p \to L_p} \inf_{\delta \in (0,1)} \left(\fonction_d(\delta)^{\big|\frac 1 p - \frac 1 2\big|} + \frac{1}{2^n \delta^2} \right)\]
for all $n \geq 0$. In particular, if $d=2$ and $n \geq 1$,
  \[ \|W_n\ast \varphi\|_\infty \leq C'_2 \|T\|_{L_p \to L_p} n^{-\big|\frac 1 p - \frac 1 2\big|} \]
  and if $F(t)=\int_0^t \varphi(s) ds$ and $h>0$,
  \[ \sup_{t \in \R} \frac{|F(t+h)+F(t-h)-2F(t)|}{h} \leq C'_2 \|T\|_{L_p \to L_p} |\log h|^{-|\frac 1 p - \frac 1 2|}.\]
\end{thm}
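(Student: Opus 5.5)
We adapt Fefferman's ball multiplier argument, quantified through $\fonction_d(\delta)$, and reduce the resulting Littlewood--Paley bound to a pointwise statement about $\varphi$.

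\emph{Step 1: reduction to a pointwise estimate.} Fix $n$ and $t_0\in\R$. The quantity $W_n\ast\varphi(t_0)$ is an average of $m(e^{t_0-s})$ against a kernel $W_n(s)$ of scale $2^{-n}$ with vanishing integral. Since the multiplier norm is invariant under dilations $\xi\mapsto \lambda\xi$, we may normalize $e^{t_0}=1$. It then suffices to bound $\int m(e^{-s})W_n(s)\,ds$, which is a smoothed ``derivative'' of $m$ near the unit sphere at frequency scale $2^n$. By the same dilation invariance, averaging $m(\lambda|\cdot|)$ against $W_n(\log\lambda)\,d\lambda/\lambda$ yields the radial multiplier with symbol $\psi(|\xi|)=(W_n\ast\varphi)(\log|\xi|)$. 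Its $L_p$ norm is at most $\|W_n\|_1\|T\|\le C\|T\|$. The symbol $\psi$ is smooth at scale $2^{-n}$, and we want to show it is small at the point $|\xi|=1$.

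\emph{Step 2: Fefferman-type testing.} Let $\{R_j\}$ be a family of $\delta$-tubes as in the definition of $\fonction_d(\delta)$, with $|\bigcup R_j|\le \varepsilon\sum|R_j|$ and the $\overline R_j$ disjoint. Take $f_j$ to be a modulated bump adapted to $R_j$: $\hat f_j$ is concentrated in a $\delta^{-1}\times \delta^{-2}$-type plate tangent to a sphere of radius $\sim\delta^{-2}$ in direction $v_j$ (after rescaling). Then $T f_j\approx \psi(1) f_j$ translated, which is large on $\overline R_j$, up to an error that we control using the smoothness of $\psi$. The curvature error of the sphere over a cap of width $\delta^{-1}$ at radius $\delta^{-2}$ is $O(1)$ in $|\xi|$, which is relative size $\delta^{2}$. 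This is where the term $1/(2^n\delta^2)$ should come from: $\psi$ varies by at most $2^n\cdot(\text{relative radial spread})$, and one chooses scales so that the mismatch is $\lesssim 1/(2^n\delta^2)$ in the normalization of the statement. Following Fefferman, we use the vector-valued estimate: $T$ bounded on $L_p$ implies, by Khintchine/Marcinkiewicz--Zygmund, $\|(\sum|Tf_j|^2)^{1/2}\|_p\le C\|T\|\,\|(\sum|f_j|^2)^{1/2}\|_p$. Take $p<2$ by duality. The right-hand side is supported on $\bigcup R_j$, so Hölder gives the bound $\le |\bigcup R_j|^{1/p-1/2}(\sum|R_j|)^{1/2}$. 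The left-hand side is at least $|\psi(1)|\,(\sum|R_j|)^{1/p}$ minus error terms, because the $\overline R_j$ are disjoint. Dividing, we get $|\psi(1)|\lesssim \|T\|\,\varepsilon^{1/p-1/2}+\|T\|/(2^n\delta^2)$. Taking the infimum over families and over $\delta$ gives the main inequality.

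\emph{Step 3: corollaries for $d=2$.} Córdoba's theorem \cite{MR272988}, in its quantitative form, gives $\fonction_2(\delta)\lesssim 1/|\log\delta|$. Choosing $\delta=2^{-n/4}$ makes $1/(2^n\delta^2)=2^{-n/2}$, which is negligible compared with $n^{-|1/p-1/2|}$. For the Zygmund statement, write $F(t+h)+F(t-h)-2F(t)=\int \varphi(t-s)\,\Delta_h(s)\,ds$, where $\Delta_h$ is a triangle kernel of integral $0$ and total mass $\lesssim h$. Decompose $\varphi=\sum_n W_n\ast\varphi$. For $2^n\ge h^{-1}$, bound trivially by $h\|W_n\ast\varphi\|_\infty$, and sum only over $n$ near $|\log h|$. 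The terms with $2^n\gg h^{-1}$ are handled using the almost orthogonality $\widehat{\Delta_h}(\xi)=O((h\xi)^{-2})$, and those with $2^n\ll h^{-1}$ using $\widehat{\Delta_h}(\xi)=O(h^3\xi^2)$. Both geometric sums are dominated by the terms with $n\approx|\log_2 h|$, which gives $h|\log h|^{-|1/p-1/2|}$.

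\emph{Main obstacle.} Step 2 is the hard part: making the error $Tf_j-\psi(1)\tilde f_j$ quantitatively small on $\overline R_j$ with exactly the $1/(2^n\delta^2)$ loss, and uniformly in the tube geometry. I would first try wave packets with Schwartz tails and estimate the error in $L_2$ on each packet via Plancherel, then use $L_2$-to-$L_p$ on sets of size $|R_j|$.
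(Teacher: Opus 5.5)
There is a genuine gap in Step~2, and it is conceptual rather than technical: the heuristic ``$Tf_j\approx\psi(1)f_j$ translated'' is false. If the symbol is essentially constant on the frequency support of $f_j$, then $Tf_j\approx\psi(1)f_j$. That function lives on $R_j$, not on $\overline{R_j}$, so the Fefferman lower bound gives nothing. Mass moves a distance comparable to the tube length only when the symbol oscillates radially across the plate at the dual scale.

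In the correct normalization (the paper's), the symbol is $\psi(|\vecteur_j+\xi/r|)$ with $r=2^n$, and the plates have radial thickness $1$ and tangential width $\delta^{-1}$ on the sphere of radius $r$. There the multiplier acts on $f_j$, approximately, as convolution along $\vecteur_j$ with the kernel $K$ of $t\mapsto(W_n\ast\varphi)(t/2^n)$, which is supported in $\{\frac12\le|x|\le 2\}$. The only approximation is flattening the sphere, and its sag $\delta^{-2}/r$ is exactly the error $1/(2^n\delta^2)$. Your bookkeeping (sphere of radius $\delta^{-2}$, variation $2^n\delta^2$) is the reciprocal regime, where the error is small precisely because nothing moves.

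Even in the right regime, a pair $(R_j,\overline{R_j})$ sees $K$ only through a one-sided window: for $f$ on $[-1,0]$ and $g$ on $[1,2]$, through a function supported in $[1,3]$ on one side. By contrast $\psi(1)=(W_n\ast\varphi)(0)=\int K$ involves all of $\{\frac12\le|x|\le2\}$. So $\|(\sum_j|Tf_j|^2)^{1/2}\|_p\gtrsim|\psi(1)|(\sum_j|R_j|)^{1/p}$ cannot hold in general.

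The paper replaces the lower bound by a pairing with $g_j$ supported on $\overline{R_j}$. The square function inequality and H\"older give $|\sum_j\int (T_jf_j)\overline{g_j}|\lesssim N\varepsilon^{1/p-1/2}$. A change of variables (Lemma~\ref{lem:technical}) evaluates each term as $\int\varphi(x/r)\widehat h(x)\,dx+O(1/(|r|\delta^2))$ with $h=f\ast g^*$. This is Lemma~\ref{lem:Lpbddness_quantitative}: it holds for one fixed $h$ supported in $[-3,-1]$, but for all $r\in\R^*$.

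The idea missing from your plan is Proposition~\ref{prop:from_one_h_to_all}. The set of $h$ obeying such a bound is an ideal of $C^\infty_c(\R)$, stable under dilations of both signs (negative $r$ amounts to testing in the opposite direction along the tubes). Hence it contains $C^\infty_c(\R\setminus\{0\})$, in particular $w(|\cdot|)$, which yields $W_n\ast\varphi$. Your Step~1 (averaging dilates to get a band-limited radial multiplier) is correct, but it does not replace this reconstruction.

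Two smaller points in Step~3. First, the upper bound $\fonction_2(\delta)\lesssim 1/|\log\delta|$ comes from the Besicovitch construction (Keich, \eqref{eq:Keich}); C\'ordoba's and Davies' results bound unions of tubes from below, the opposite direction. Second, $\Delta_h=\chi_{[-h,0]}-\chi_{[0,h]}$ is a difference of boxes, not a triangle, and $|\widehat{\Delta_h}(\xi)|\lesssim\min(h^2|\xi|,|\xi|^{-1})$. With these rates (or Bernstein applied to $W_n\ast F$ as in \eqref{eq:from_LP_decay_to_modulus}) your two geometric sums do give $h|\log h|^{-|\frac1p-\frac12|}$.
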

As a consequence, we get that if Conjecture~\ref{conj:Kakeya} fails for $\varepsilon$, then this forces $\varphi$ to belong to $B^{\alpha}_{\infty,\infty}(\R)$ for some $\alpha=\frac{\varepsilon|p-2|}{4p+\varepsilon|p-2|}>0$, or equivalently if $\alpha<1$, $\varphi$ to be $\alpha$-H\"older continuous (see Lemma~\ref{lem:infi_for_particular_fd}). More generally,
\begin{cor}\label{cor:Lpbddness_LP_quantitative} Assume that there is a $L_p$-bounded radial multiplier on $\R^d$ with discontinuous symbol. Then $\int_0^1 \frac{\fonction_d(\delta)^{|\frac 1 p - \frac 1 2|}}{\delta}=\infty$.
\end{cor}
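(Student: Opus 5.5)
We prove the contrapositive. Assume $\int_0^1 \fonction_d(\delta)^{|\frac 1 p - \frac 1 2|}\frac{d\delta}{\delta}<\infty$ and let $T$ be an $L_p$-bounded radial multiplier with symbol $m(|\cdot|)$. We will show that $\varphi(t)=m(\exp t)$ agrees almost everywhere with a continuous function. Then $m$ is (a.e. equal to) a continuous function on $(0,\infty)$, which contradicts the hypothesis. The tool is Theorem~\ref{thm:Lpbddness_LP_quantitative}. Since $\varphi=\sum_{n\geq 0} W_n\ast\varphi$ in the sense of distributions and each $W_n\ast\varphi$ is smooth, it is enough to prove
\[ \sum_{n\geq 0}\|W_n\ast\varphi\|_\infty<\infty. \]
The series then converges uniformly, its sum is continuous, and this sum coincides with $\varphi$ almost everywhere.

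To bound the series, we choose in Theorem~\ref{thm:Lpbddness_LP_quantitative} the value $\delta=\delta_n=2^{-n/4}$. This gives
\[ \|W_n\ast\varphi\|_\infty\le C\big(\fonction_d(2^{-n/4})^{\beta}+2^{-n/2}\big), \qquad \beta=\Big|\frac1p-\frac12\Big|. \]
The terms $2^{-n/2}$ are summable. It therefore remains to show that $\sum_n \fonction_d(2^{-n/4})^\beta<\infty$.

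Here we use a monotonicity property: $\fonction_d$ is essentially nondecreasing in $\delta$, up to constants. To see this, take $\delta'\le\delta$ and a family of $\delta$-tubes achieving ratio $\varepsilon$. Cover each $\delta$-tube by roughly $(\delta/\delta')^{d-1}$ parallel $\delta'$-tubes whose cross-sections form a packing, and keep only those whose shifted copies $\overline R$ remain disjoint. The union of the new tubes is contained in the old union, while the total volume is comparable. Hence $\fonction_d(\delta')\lesssim \fonction_d(\delta)$.

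This property is the step I expect to be the main obstacle. The disjointness of the $\overline{R}_j$ must survive the subdivision. The sub-tubes of one $\delta$-tube have disjoint $\overline{R}$'s, since their cross-sections are disjoint. Sub-tubes coming from different original tubes are also fine, because their $\overline{R}$'s lie inside the original $\overline{R}_j$, which are pairwise disjoint. So the argument should go through, but the constants (a packing loses a dimensional factor) have to be tracked carefully.

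Granting $\fonction_d(\delta')\le C\fonction_d(\delta)$ for $\delta'\le\delta$, we compare the sum with the integral. On the interval $[2^{-(n+1)/4},2^{-n/4}]$ we have $\fonction_d(2^{-(n+1)/4})^\beta \lesssim \fonction_d(\delta)^\beta$, and this interval has $d\delta/\delta$-measure $\frac{\log 2}{4}$. It follows that
\[ \sum_n \fonction_d(2^{-n/4})^\beta\lesssim \fonction_d(1)^\beta+\int_0^1\fonction_d(\delta)^\beta\frac{d\delta}{\delta}<\infty. \]
This concludes the proof.

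If the monotonicity turns out to be delicate, there is an alternative. One can apply the infimum in Theorem~\ref{thm:Lpbddness_LP_quantitative} over all $\delta\in[2^{-(n+1)/4},2^{-n/4}]$, where the second term stays $\lesssim 2^{-n/2}$. This bounds $\|W_n\ast\varphi\|_\infty$ by $C(\inf_{\delta\in I_n}\fonction_d(\delta)^\beta+2^{-n/2})$, and the infimum over $I_n$ is at most the $d\delta/\delta$-average of $\fonction_d^\beta$ over $I_n$. Summing these averages again gives the integral. This route avoids monotonicity altogether, and I would in fact use it as the main argument.
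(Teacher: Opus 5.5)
Your proof is correct. The averaging route you say you would use as the main argument is essentially the paper's. In Corollary~\ref{cor:Lpbddness_LP_quantitative_precise} the infimum in Theorem~\ref{thm:Lpbddness_LP_quantitative} is bounded by the $d\delta/\delta$-average of $\fonction_d(\delta)^{|\frac 1 p-\frac 1 2|}$ over $[2^{-(n+1)/3},2^{-n/3}]$. These bounds are then summed via Lemma~\ref{lem:continuity_from_LittlewoodPaley}, and the paper additionally tracks the tails to get an explicit modulus of continuity. Your subdivision argument giving $\fonction_d(\delta')\lesssim_d \fonction_d(\delta)$ for $\delta'\le\delta$ is also valid, but, as you noticed, not needed.
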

When  $\int_0^1 \frac{\fonction_d(\delta)^{|\frac 1 p - \frac 1 2|}}{\delta}<\infty$, we derive explicit formulas for the modulus of continuity (see Corollary~\ref{cor:Lpbddness_LP_quantitative_precise}).

Finally, developping the ideas that we understood with Parcet and Tablate \cite{MR4882285} and previous transference techniques \cite{MR2866074,CaspersdlS}, we can adapt the proofs to obtain the following spherical analogue of Theorem~\ref{thm:Lpbddness_LP_quantitative}. %
\begin{thm}\label{thm:Spbddness_LP_quantitative} There is a constant $C_d$ such that for every bounded measurable function $m:(-1,1)\to \C$,
  \[ |W_n \ast (m\circ \cos)(\theta)| \leq C_d \|m(\langle \cdot,\cdot\rangle)\|_{MS_p(L_2(\sphere^d))} \left( \inf_{\delta>0} \fonction_d(\delta)^{\big|\frac 1 p - \frac 1 2\big|} + \frac{1}{|\delta^2 2^n \sin \theta|^{\frac 1 3}}\right)\]
for all $n \geq 0$. In particular if $d\geq 2$ and $n\geq 1$,
  \[ |W_n \ast (m\circ \cos)(\theta)| \leq C'_2 \|m(\langle \cdot,\cdot\rangle)\|_{MS_p(L_2(\sphere^2))} \max(1,n+\log|\sin \theta|)^{\big|\frac 1 p - \frac 1 2\big|}.\]
\end{thm}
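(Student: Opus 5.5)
We transfer the Euclidean argument behind Theorem~\ref{thm:Lpbddness_LP_quantitative} to the sphere, using Schur multipliers. We work in the angular variable: $\psi=m\circ\cos$ is a function of the geodesic distance $\theta=d(\xi,\eta)\in(0,\pi)$ on $\sphere^d$. The Littlewood--Paley piece $W_n\ast\psi(\theta)$ is the pairing of $\psi$ with a modulated wave packet of width $2^{-n}$ centred at $\theta$. So we need to test the Schur multiplier $m(\langle\cdot,\cdot\rangle)$ against a configuration localized near the sphere of radius $\theta$ around a point, at scale $2^{-n}$.

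\textbf{Step 1: local Euclidean model.} Fix $\theta$ and a scale $r=2^{-n}$. Choose a cap $U\subset\sphere^d$ of angular radius comparable to $\delta^{-1}r$, or a similar scale. Pair it with the antipodal-type region $V$ of points at distance roughly $\theta$ from $U$. On $U\times V$ the geodesic distance $d(\xi,\eta)$, to first order around a base pair, looks like a Euclidean distance $|x-y|$ between points of $\R^d$. The deviation is a curvature term of size $\mathrm{diam}^2/\sin\theta$.

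Restricting a Schur multiplier to $U\times V$ does not increase its norm. By the now-standard transference (Schur multipliers with symbol $\varphi(x-y)$ versus Fourier multipliers, as in \cite{MR2866074,CaspersdlS,MR4882285}), a Schur multiplier whose symbol is approximately a radial function of $x-y$ controls the corresponding Fourier-type construction. Concretely, we run Fefferman's argument directly in Schur form. We take the family of $\delta$-tubes $R_j$ realizing $\fonction_d(\delta)$, rescaled to the cap. The $R_j$ play the role of the "source" regions and the disjoint $\overline{R}_j$ play the role of the targets, translated so that the separation between them is $\theta$ in angle.

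For each $j$ we build an operator $A_j$ whose kernel is $e^{2i\pi 2^n\langle \xi-\eta,u_j\rangle}$ times indicators, in the spirit of Fefferman's modulated functions. Applying the multiplier multiplies $A_j$ by $\psi(d(\xi,\eta))$. After the modulation this extracts $W_n\ast\psi(\theta)$, up to an error controlled by the curvature mismatch.

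\textbf{Step 2: norm estimates.} The $S_p$ norms are compared via an interpolation/Hölder argument between $S_2$ and $S_\infty$ (or $S_1$). On the input side, the disjointness of the $\overline{R}_j$ makes $\sum A_j$ almost orthogonal, with no loss. On the output side, the lower bound comes from concentration on $\bigcup R_j$, whose measure is at most $\fonction_d(\delta)\sum|R_j|$. This produces the factor $\fonction_d(\delta)^{|1/p-1/2|}$, exactly as in the Euclidean proof. We then use duality to cover $p<2$.

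\textbf{Step 3: the error term, the main obstacle.} The flat approximation of $d(\xi,\eta)$ on tube-shaped regions of length $1$ and width $\delta$ (in the rescaled cap) incurs a phase error. After modulation at frequency $2^n$, this error behaves like $2^n\times(\text{curvature})\times(\text{tube geometry})$, with the curvature of the distance function at separation $\theta$ of order $1/\sin\theta$.

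We must choose the cap size to balance two constraints. It has to be large enough that $\delta$-tubes at frequency $2^n$ resolve the packet, which gives the $\frac{1}{2^n\delta^2}$ term of the Euclidean case. It also has to be small enough that the curvature error is tolerable. Optimizing the cap size $\rho$ in a two-term bound of the form $\frac{1}{2^n\delta^2\rho}+\frac{\rho^2}{\sin\theta}$, or similar, yields the exponent $\frac13$ and the term $|\delta^2 2^n\sin\theta|^{-1/3}$. Controlling this mismatch rigorously is the hard step. We would write $d(\xi,\eta)=|x-y|+O(\rho^2/\sin\theta)$ in suitable normal coordinates and absorb the error using the Lipschitz bound $\|\partial_\theta(W_n\ast\psi)\|_\infty\lesssim 2^n\|\psi\|_\infty$, together with $\|\psi\|_\infty\le\|m(\langle\cdot,\cdot\rangle)\|_{MS_p}$.

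\textbf{Step 4: the case $d=2$.} The second bound follows from Córdoba's estimate $\fonction_2(\delta)\lesssim|\log\delta|^{-1}$ (the dimension-$2$ Kakeya theorem \cite{MR272988}). We choose $\delta$ with $\delta^2 2^n\sin\theta\approx(n+\log\sin\theta)^{3}$, so that both terms are of order $\max(1,n+\log|\sin\theta|)^{-|1/p-1/2|}$. Small values of $n+\log\sin\theta$ are handled by the trivial bound $\|W_n\ast\psi\|_\infty\lesssim\|\psi\|_\infty$.
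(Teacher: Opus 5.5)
\textbf{There is a genuine gap in Step~2, the step that must produce $\fonction_d(\delta)^{|\frac1p-\frac12|}$.} Your outline has the right skeleton: localize at a pair of points at angle $\theta$, pass to $\R^d$, run Fefferman's argument with tubes realizing $\fonction_d(\delta)$, pay a curvature error of order $1/\sin\theta$, and optimize with exponent $\frac13$. But Step~2 has no working mechanism as you describe it.

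You place the tubes on the sphere and argue by disjointness and by concentration on $\bigcup_j R_j$. Support and measure do not control Schatten norms: there is no analogue of $\|F\|_p\le |E|^{\frac1p-\frac12}\|F\|_2$ for kernels supported on a small set. For instance, a block-diagonal kernel made of $M$ rank-one projections on intervals of length $\frac1M$ has support of measure $\frac1M$ but $\|K\|_p/\|K\|_2=M^{\frac1p-\frac12}$. Moreover, a kernel $e^{2i\pi 2^n\langle\xi-\eta,u_j\rangle}$ times indicators of product sets is rank one, so it cannot exhibit any $S_p$ gain. Nor can you transfer the multiplier itself: its symbol is only approximately of the form $\phi(x-y)$, and $S_p$-multiplier norms are not stable under uniformly small perturbations of the symbol. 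What gets transferred are the test functions.

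In the paper, the tubes live in the physical space dual to the rescaled coordinates $x,y\in\R^d$ on the two caps. The test operators are windowed Toeplitz kernels $A_j=(\varphi_R(x)^{\frac1p}\widehat{f_j}(x-y)\varphi_R(y)^{\frac1p})$ and $B_j=(\varphi_R(x)^{\frac1q}\widehat{g_j}(x-y)\varphi_R(y)^{\frac1q})$. The Kakeya information enters only through the \emph{complete} contractivity of $f\mapsto A_f$ from $L_p(\R^d)$ to $S_p$, proved by interpolating between $p=1$ and $p=\infty$. This gives $\|\sum_j A_j\otimes e_{1,j}\|_p\le \|(\sum_j|f_j|^2)^{\frac12}\|_{L_p}$, after which Lemma~\ref{lem:consequence_Kakeya} applies verbatim.

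You also need a Schur analogue of Meyer's lemma, so that one multiplier norm controls all $N$ directions simultaneously. The paper treats $(S_j)_j$ as a single Schur multiplier on $L_2(\R^d\times\{1,\dots,N\})$ with symbol $((x,i),(y,j))\mapsto \widetilde{m}_r(U_jx,U_jy)$. This symbol has the form $m(\langle F(x),G_j(y)\rangle)$ for smooth maps into $\sphere^d$ with absolutely continuous push-forwards, so its norm is $\le 1$ by Lemma~\ref{lem:restriction}(2). Restricting to a subset $U\times V$ is not enough: the rotated target regions overlap whenever directions repeat or are close, which the definition of $\fonction_d$ allows.

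\textbf{Other steps that need correcting.}

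\emph{The first-order model is directional, not radial.} With $\xi=\frac{e_{d+1}+x/r}{|e_{d+1}+x/r|}$ and $\eta=V_\theta\frac{e_{d+1}-y/r}{|e_{d+1}-y/r|}$, one has $r(d(\xi,\eta)-\theta)=x_d-y_d+O(\frac{|x|^2+|y|^2}{r\sin\theta})$ (Lemma~\ref{eq:Taylor_expansion_in_r}). It is this half-plane-type Toeplitz structure, rotated by $U_j$, that reproduces Fefferman's argument. The distortion must be moved onto the smooth factor $\widehat h$ by a change of variables, as in Lemma~\ref{lem:change_of_variable_Schur}. A Lipschitz bound on $W_n\ast\psi$ does not apply, because the distortion depends on the integration variables.

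\emph{The error terms are not the ones you wrote.} They are the F\o{}lner error $\frac{1}{\delta R}$ (the kernel spreads over $|x'-y'|\lesssim \delta^{-1}$ while the window has radius $R$) and the curvature error $\frac{R^2}{r\sin\theta}$. Choosing $R=(r\sin\theta/\delta)^{1/3}$ gives $(r\delta^2\sin\theta)^{-1/3}$. Your expression $\frac{1}{2^n\delta^2\rho}+\frac{\rho^2}{\sin\theta}$ optimizes to $(2^{2n}\delta^4\sin\theta)^{-1/3}$ instead, and the cap size in your Step~1 makes the F\o{}lner error of order $1$.

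\emph{The construction does not output $W_n\ast\psi(\theta)$ directly.} It only controls $\int m(\cos(\theta+t/r))\widehat h(t)\,dt$ for $h=f\ast g^*$ supported in $(-3,-1)$. Passing to $W_n$ and $W_n'$ uses Proposition~\ref{prop:from_one_h_to_all}. Its ideal argument requires the $\theta$-dependent bound $G_r(\theta)=A+B(r|\sin\theta|)^{-1/3}$ to satisfy $MG_r\le KG_r$ (Example~\ref{ex:HardyLittlewood}).

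\emph{Step~4 cites the wrong result.} The estimate $\fonction_2(\delta)\lesssim|\log\delta|^{-1}$ is an \emph{upper} bound, and it comes from Keich's Besicovitch construction \eqref{eq:Keich}. The Kakeya theorem of C\'ordoba and Davies gives the opposite inequality.
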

As for Fourier multipliers, this theorem implies that such Schur multipliers have continuous symbols with explicit modulus if $\fonction_d(\delta)$ decays fast enough. Together with Theorem~\ref{thm:rank0reductionImproved}, this implies Theorem~\ref{thm:main}. See Corollary~\ref{cor:Spbddness_LP_quantitative_precise} and \ref{cor:Spbddness_implies_integrable_modulus}, .

\subsection*{Organization} The first section after this introduction is devoted to various preliminaries on Schur multipliers, non-commutative $L_p$ spaces, the Besicovich construction, Besov spaces and partitions of unity. It consists mostly of reminders, although Propositions~\ref{prop:characterization_of_binftyinfty0} and \ref{prop:from_one_h_to_all} might not already appear in print. Section~\ref{sec:AP_Schur} is devoted to the proof of Theorem~\ref{thm:rank0reductionImproved}. The next two sections are devoted to the study of radial Fourier multipliers in the Euclidean space. They are fully independent from operator algebras and non-commutative analysis. First in section~\ref{sec:qualitative} we prove Theorem~\ref{thm:mainqualitative}. The proof is similar to that in Fefferman's Ball Multiplier Theorem and can be efficiently divided in several independent steps. The next section~\ref{sec:quantitative} proves Theorem~\ref{thm:Lpbddness_LP_quantitative} and its corollaries. The proof exploits several of the previous ingredients, but the steps have to be all performed at once. Finally in the last section we prove Theorem~\ref{thm:Spbddness_LP_quantitative} and its corollaries.
\subsection*{Acknowledgements} I thank Javier Parcet and Eduardo Tablate for numerous and useful interactions and encouragements, Charles Fefferman for inspiring discussions on Question~\ref{question:mainfourier} during my stay at the IAS in the year 2023-2024, and Jean-Yves Chemin, Petru Mironescu and Liding Yao for enlightening exchanges about function spaces, in person or online. I am also grateful to Mark Lewko and Hong Wang for useful comments about the Kakeya conjectures. The first spark that led to this work was initiated during a stay at the ICMAT in the Severo Ochoa Laboraties. Part of this work was also performed during the author's visit to the Isaac Newton Institute during the 2025 programme \emph{Operators, Graphs, Groups}, partially supported by a grant from the Simons Foundation. The author's research was also partially supported by the ANR project ANR-24-CE40-3137.
\section{Preliminaries}

%% \[ \inf_{0<\delta} \delta^{\alpha} + \frac{1}{A \delta^\beta} \leq 2 A^{-\frac{\alpha}{\alpha+\beta}}.\]

%% \[ \inf_{0<\delta<1} |\log \delta|^{-\alpha} + \frac{A}{\delta^{\beta}} \leq C(\alpha,\beta) (\log A)^{-\alpha}. \]

\subsection{Schur multipliers}
If $\cH$ is a complex Hilbert space, and $1\leq p<\infty$, we denote by
$S_p(\cH)$ (or simply $S_p$ is $\cH$ is understood) the Schatten
$p$-class, which is the space of bounded linear operators $T:\cH \to \cH$ such
that $\Tr((T^*T)^{p/2})<\infty$. It is a Banach space for the norm
$\|T\|_p = \Tr((T^*T)^{p/2})^{\frac 1 p}$, and $S_2 \cap S_p$ is
always dense in $S_p$. If $p=\infty$ $S_\infty(\cH)$ is the space of
compact operators on $\cH$ with operator norm.

Consider now the special case when $\cH=L_2(X,\mu)$ for a measure space $(X,\mu)$. Then the space $S_2(L_2(X,\mu))$ coincides isometrically with the space $L_2(X\times X,\mu \otimes \mu)$, where we identify $a \in L_2(X\times X,\mu\otimes \mu)$ with the operator $A$ defined by $Af(x) = \int a(x,y) f(y) d\mu(y)$. Therefore if $m \in L_\infty(X\times X,\mu \otimes \mu)$, then the linear map sending $A = (a(x,y))$ to $S_m(A):= (m(x,y) a(x,y))$ is a bounded operator on $S_2(L_2(X,\mu))$ that is called the Schur multiplier with symbol $m$. If $S_m$ maps $S_2 \cap S_p$ into $S_2 \cap S_p$ and this map extends (necessarily uniquely) continuously to $S_p$, we say that $S_m$ is $S_p$-bounded and we still write $S_m$ for the extension and $\|m\|_{MS_p}$ for its norm.

We shall need the following basic useful fact about restrictions.
\begin{lem}\label{lem:restriction} Let $X,Y$ be locally compact spaces with $\sigma$-finite Radon measures $\mu,\nu$, and $m:X\times X\to \C$ and $f,g:Y\to X$ be Borel measurable functions. Assume either that (1) $m,f,g$ are continuous and $\mu$ has full support, or that (2) the images of $\nu$ by $f$ and $g$ are absolutely continuous with respect to $\mu$.
  \[ \|m\circ (f\times g)\|_{MS_p(L_2(Y,\nu))} \leq \|m\|_{MS_p(L_2(X,\mu))}.\]
\end{lem}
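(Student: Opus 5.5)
We treat the two hypotheses separately, using the same principle in both: a Schur multiplier of the form $m\circ(f\times g)$ on $L_2(Y,\nu)$ can be realised as a compression of $S_m$ on $L_2(X,\mu)$, or as a limit of such compressions.

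\emph{Case (2).} Let $\mu_f=f_*\nu$ and $\mu_g=g_*\nu$. These are absolutely continuous with respect to $\mu$, and $\sigma$-finite after restricting to where the densities are finite. Set $\rho_f = d\mu_f/d\mu$ and $\rho_g=d\mu_g/d\mu$. We will first show that $\|m\|_{MS_p(L_2(X,\mu_f),L_2(X,\mu_g))}\leq \|m\|_{MS_p(L_2(X,\mu))}$, understood as a multiplier on operators $L_2(\mu_g)\to L_2(\mu_f)$. The map $U_f\colon h\mapsto \rho_f^{1/2}h$ is an isometry $L_2(\mu_f)\to L_2(\mu)$, and similarly for $U_g$. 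An operator $A$ from $L_2(\mu_g)$ to $L_2(\mu_f)$ with kernel $a(x,y)$ is sent to $U_fAU_g^*$, whose kernel with respect to $\mu$ is $\rho_f(x)^{1/2}a(x,y)\rho_g(y)^{1/2}$. Since $U_f,U_g$ are isometries, $A\mapsto U_fAU_g^*$ is isometric on $S_p$, and it commutes with multiplication of kernels by $m$. This gives the claim.

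Next, $h\mapsto h\circ f$ is an isometry $V_f\colon L_2(X,\mu_f)\to L_2(Y,\nu)$, and similarly $V_g$. For $B$ with kernel $b$ on $X\times X$, the operator $V_fBV_g^*$ has kernel $b(f(x),g(y))$ with respect to $\nu$. This covers only kernels that factor through $f\times g$, so we also need a conditional-expectation step. Write $E_f=V_fV_f^*$, the conditional expectation onto $f^{-1}(\text{Borel})$. For a general kernel $c$ on $Y\times Y$, the kernel $m(f(x),g(y))c(x,y)$ is not of that factored form. The remedy is to pass to the fibers, via a disintegration of $\nu$ along $f$ and along $g$: we realise $L_2(Y,\nu)$ as $L_2(X,\mu_f;L_2(\text{fibre}))$ and $m\circ(f\times g)$ as $m\otimes 1$ acting on $S_p(L_2(\mu_g;\mathcal K),L_2(\mu_f;\mathcal K))$. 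The bound then follows from the complete boundedness of $S_p$-bounded Schur multipliers, i.e.\ $\|m\otimes \mathrm{id}_{S_p}\|=\|m\|_{MS_p}$, which is a known fact. If this is not yet available in the paper, we would get it from the standard argument that Schur multipliers commute with amplification after identifying $S_p(\ell_2\otimes H)$ with matrices. We expect this disintegration and complete boundedness step to be the main technical point.

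\emph{Case (1).} We approximate by the discrete case. By density, it is enough to bound $\|S_{m\circ(f\times g)}(A)\|_p$ for $A$ with continuous compactly supported kernel, and by lower semicontinuity of the $S_p$ norm under weak convergence it suffices to handle finite-rank approximants. Take points $y_1,\dots,y_N$ with weights $\nu(\text{cell}_i)$ from a fine partition of the support. Because $\mu$ has full support, the points $f(y_i)$ and $g(y_j)$ can be approximated by small balls $B_i\ni f(y_i)$ and $B'_j\ni g(y_j)$ of positive $\mu$-measure. The normalised indicator functions of these balls span subspaces on which $S_m$ acts, up to an error $o(1)$ coming from the continuity of $m$, as the finite matrix multiplier $(m(f(y_i),g(y_j)))_{i,j}$. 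This is the compression argument of case (2), now applied to the finite measure $\sum\delta_{y_i}$, so the finite matrix multiplier has norm at most $\|m\|_{MS_p}$. Letting the mesh go to $0$, the matrix-valued Riemann sums converge in $S_p$ to $S_{m\circ(f\times g)}(A)$ by continuity, which proves the inequality. The only delicate point here is the uniform approximation on compacts, which is routine given continuity of $m$, $f$ and $g$.
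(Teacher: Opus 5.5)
\textbf{There is a genuine gap: the step you call a known fact is an open problem.}

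Your first compression step in case (2), conjugating by $U_f,U_g$, is correct. The disintegration also correctly reduces the claim to the bound $\|S_m\otimes \mathrm{id}_{S_p(\mathcal K)}\|\leq \|m\|_{MS_p(L_2(X,\mu))}$. Equivalently, the Schur multiplier with symbol $m\otimes 1$ on $L_2(X\times\{1,\dots,k\})$ must have norm at most $\|m\|_{MS_p}$.

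This is where the proof breaks. Identifying $S_p(\ell_2\otimes H)$ with block matrices only shows that $S_m\otimes\mathrm{id}$ is again a Schur multiplier, with symbol $m\otimes 1$. It gives no control of its norm. Whether $S_p$-bounded Schur multipliers are automatically completely bounded with the same norm is a well-known open question of Pisier for $1<p<\infty$, $p\neq 2$; it holds for $p=1,2,\infty$.

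The step is unavoidable exactly when $f$ or $g$ is not injective. That is the situation in which the paper uses the lemma: in Lemma~\ref{lem:square_inequality_Schur} the left map does not depend on the index $i$.

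Case (1) has the same hole. If $f(y_i)=f(y_{i'})$ with $i\neq i'$, your balls $B_i,B_{i'}$ coincide and the normalised indicators are not orthonormal. The matrix $(m(f(y_i),g(y_j)))_{i,j}$ then has repeated rows: it is an inflation of a restriction of $m$, so its norm is again a cb-type quantity.

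In fact, if $\mu$ may have atoms, take $X$ finite with counting measure, $Y=X\times\{1,\dots,k\}$, and $f=g$ the projection. Both (1) and (2) hold, and the inequality reads $\|S_m\otimes \mathrm{id}_{S_p^k}\|\le \|S_m\|$. So completing your argument in that generality would answer Pisier's question. The paper does not prove the lemma itself; it quotes \cite[Theorem 1.19]{LafforguedlS} and \cite[Lemma 2.1]{MR4882285}.

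\textbf{How to repair it.} What makes the argument work is either completely bounded norms or non-atomicity of $\mu$; the latter holds in all of the paper's applications. With cb norms your proof goes through essentially as written.

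If $\mu$ has full support and no atoms, no point of $X$ is isolated. In case (1) you can then move coincident points $f(y_i)$ to distinct nearby points with disjoint neighbourhoods of positive measure. The error is absorbed by continuity of $m$, since for an $N\times N$ matrix one has $\|M\|_{MS_p}\le\sum_{i,j}|M_{ij}|$.

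In case (2), with $m$ merely measurable, you need an inflation argument, for instance on the sphere:
\begin{itemize}
\item Split each cell of a fine partition of $X$ into $k$ pieces of equal $\mu$-measure. This gives a bijection $\Phi:X\times\{1,\dots,k\}\to X$ pushing $\mu\otimes\#$ to $k\mu$ and moving points by at most the mesh.
\item The multiplier with symbol $m\circ(\Phi\times\Phi)$ is unitarily conjugate to $S_m$.
\item As the mesh tends to $0$, $m\circ(\Phi\times\Phi)\to m\otimes 1$ locally in measure (approximate $m$ by continuous functions).
\item Lower semicontinuity of the $S_p$ norm under weak-operator convergence then gives $\|S_{m\otimes 1}\|\le\|S_m\|$.
\end{itemize}

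\textbf{A smaller error.} You cannot discard the set where $d\mu_f/d\mu=\infty$, because its preimage may have positive $\nu$-measure. Instead replace $\nu$ by an equivalent finite measure; this does not change Schur multiplier norms on $L_2(Y,\nu)$.
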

\begin{proof} Under the assumption (1), the lemma is an immediate consequence of \cite[Theorem 1.19]{LafforguedlS}. Under the assumption (2), this is \cite[Lemma 2.1]{MR4882285}.
\end{proof}

\subsection{Non-commutative $L_p$ spaces and completely bounded maps}\label{subsection:NCLp}
The definition of Schatten classes recalled above is a particular case of the more general construction of $L_p$-spaces of a von Neumann algebra, and so is the classical notion of $L_p$ space over a measure space. Such general non-commutative $L_p$ spaces or maps between them will not appear often (only in \S~\ref{sec:AP_Schur} and in the proof of Lemma~\ref{lem:square_inequality_Schur}), so we only briefly recall some definitions for von Neumann algebras that admit normal semifinite faithful (nsf) traces, which are the only algebras encountered in this paper. We refer for example to \cite{PisierOS,PisierXu} for general references. Let $(\cM,\tau)$ be a von Neumann algebra with a nsf trace $\tau$. The set $\{x \in \cM \mid \tau((x^*x)^p)<\infty\}$ is a weak-* dense ideal in $\cM$, and the quantity $\|x\|_p:= \tau((x^*x)^p)^{\frac 1 p}$ is a norm on it. The completion is the non-commutative $L_p$ space of $\cM$, and is denoted $L_p(\cM,\tau)$, or simply $L_p(\tau)$. When $\cM=B(\cH)$ with the usual trace $\Tr$, we recover $S_p(\cH)$. When $\cM=L_\infty(X,\mu)$ for a semifinite measure space $(X,\mu)$ with trace $\tau(f)=\int f d\mu$, we recover $L_p(X,\mu)$.

Pisier discovered \cite{MR1648908} that $L_p(\cM,\tau)$ admits a natural operator space structure, which allows one to talk about completely bounded maps between $L_p(\cM,\tau)$ and another operator space. He also proved that for linear maps $u:L_p(\cM,\tau_N) \to L_p(\cN,\tau_\cN)$ between non-commutative $L_p$ spaces with the same $p$, it has a much simpler characterization:
\[ \|u\|_{cb} = \sup_n \|\mathrm{id}_{M_n} \otimes u \|_{L_p(M_n(\cM),\Tr\otimes \tau_\cM) \to L_p(M_n(\cN),\Tr\otimes \tau_\cN)},\]
where $\Tr\otimes \tau_\cM(x)=\sum_{i=1}^n \tau_{\cM}(x_{i,i})$ is the natural trace on $M_n(\cM)$.

\subsection{The Besicovich construction and comparison with usual forms of the Kakeya conjecture}\label{subsec:besicovich}
Let $0<\delta<1$. A $\delta$-tube $R$ in $\R^d$ is the image of $[0,1]\times B(0,\delta) = \{(x,y) \in \R\times \R^{d-1} \mid 0\leq x \leq 1, |y|<\delta\}$ by an affine isometry of the Euclidean space $\R^d$, together with an orientation that is given by the vector $\vecteur(R)$ pointing from the image of $(0,0)$ to the image of $(1,0)$. Given a $\delta$-tube $R$, we write
\[\overline{R} = R+2\vecteur(R).\]
The precise choice of $2$ in the definition $\overline{R} = R+2\vecteur(R)$ is not important. All the results of the paper remain true if it is replaced by another number $>1$ and more generally if $\overline R:=\overline{R}^{[a,b]}$ is the image of $[a,b] \times B(0,\delta)$ for some $a<b$, only the constants differ. 

For every $\delta \in (0,1)$, in the introduction we defined $\fonction_d(\delta)$ as the infimum of the real numbers $\varepsilon>0$ such that there is a finite family of $\delta$-tubes $\{R_j\}$ such that the $\overline{R}_j$ are pairwise disjoint but
  \begin{equation}\label{eq:gooddeltatubes} \frac{\Big| \bigcup_j R_j \Big|}{\sum_j |R_j|} \leq \varepsilon.
  \end{equation}
  More generally we write $\fonction_d^{[a,b]}(\delta)$ if we require that the $\overline{R}_j^{[a,b]}$ are pairwise disjoint.
  
  Keich's construction \cite{zbMATH01344340} of Besicovitch sets gives that
  \begin{equation}\label{eq:Keich}\frac{c}{|\log \delta|} \leq \fonction_2(\delta) \leq \frac{C}{|\log \delta|}\end{equation}
  and the same holds, with different $c,C$ for $\fonction_d^{[a,b]}$ when $1<a<b\leq \infty$. In higher dimension not much in known as far as upper bounds are concerned, except the trivial bound $\fonction_d \lesssim \fonction_2$.

  It is worth pointing out that the requirement that the $\delta$-tubes $\overline{R_j}$ are pairwise disjoint is not exactly what is needed for the constructing Kakeya sets, but it is a feature of the constructions of Kakeya sets (at least those known to me). The natural requirement is that the directions of the $\delta$-tubes forms a maximal $\delta$-net, so that ${\sum_j |R_j|}\simeq 1$. Morally, the two conditions are similar: if two $\delta$-tubes  have directions that make an angle $\geq C \delta$ and they intersect (which should happen for many different pairs if we want \eqref{eq:gooddeltatubes} to be small), then their translates are pairwise disjoint. But the conditions do not coincide, so we can define a variant $\widetilde{\fonction}_d(\delta)$ as the smallest real number $\varepsilon>0$ such that there is a family of $\delta$-tubes satisfying \eqref{eq:gooddeltatubes} whose directions form a maximal $\delta$-separated subset of $\sphere^{d-1}$. Then it is easy to see that Conjecture~\ref{conj:Kakeya} for $\widetilde{\fonction}_d$ is equivalent to the conjecture that all Kakeya sets have upper Minkowski dimension $d$. More generally, all Kakeya sets have upper (lower) dimension $\geq d-c$ if and only if
  \[ \limsup \widetilde \fonction_d(\delta) \delta^{-c'} = \infty\ \  (\mathrm{respectively }  \liminf \widetilde \fonction_d(\delta) \delta^{-c'}=\infty)\]
  for every $c'>c$.

As a naive observer of the question, I have hard time imagining how it could be that $\widetilde{\fonction}_d$ is much smaller than $\fonction_d$, so I would expect that the asymptotic behaviour of $\fonction_d(\delta)$ as $\delta\to 0$ indeed captures the dimension of all Kakeya sets. In particular, I would expect that Conjecture~\ref{conj:Kakeya} implies that all Kakeya sets have Minkowski upper dimension $d$, and Conjecture~\ref{conj:Kakeya} with $\liminf$ instead of $\limsup$ implies that all Kakeya sets have Minkowski lower dimension $d$.

I give some straightforward evidence in this sense. %The argument is a bit informal, and to make it precise we need to allow other values than $2$ in the definition $\overline{R}=R+2\vecteur(R)$.
Let $R,R'$ be two $\delta$-tubes with $\delta$-separated directions, $L,L'$ the supporting affine lines and $\vecteur,\vecteur' \in \sphere^{d-1}$ the supporting directions. Then for their translates $\overline{R}^{[a,b]}$ and $\overline{R'}^{[a,b]}$ to intersect, we need two things: (1) $L$ and $L'$ must be almost coplanar in the sense that $d(L,L') \leq 2\delta$, and (2) there is a point in $L^\delta \cap (L')^\delta$ at distance $\geq a-1$ from $R$ and $R'$. This does not happen if $d(R,R') \leq C d(\vecteur,\vecteur')$ and $a$ is large enough. As a consequence, we obtain
\begin{lem} Every Kakeya set of the form $\bigcup_{u \in \sphere^{d}} p(u)+[0,1]u$ for a Lipschitz function $p:\sphere^{d}\to \R^d$ have Minkowski upper dimension $\geq d_+$ and Minkowski lower dimension $\geq d-$, where
  \[ d_+ = d-\inf_{a>1}\liminf_{\delta\to 0} \frac{\log \fonction_d^{[a,\infty]}(\delta)}{\log \delta},\]
\[  d_- = d-\inf_{a>1} \limsup_{\delta\to 0} \frac{\log\fonction_d^{[a,\infty]}(\delta)}{\log \delta}.\]
\end{lem}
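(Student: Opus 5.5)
The plan is to discretize at scale $\delta$ and produce, from the Kakeya set $K=\bigcup_{u} p(u)+[0,1]u$, a family of $\delta$-tubes whose translates $\overline{R}^{[a,\infty]}$ are pairwise disjoint. The ratio in \eqref{eq:gooddeltatubes} will then be bounded by $\delta^{-d}|K^\delta|$ up to constants. Here $K^\delta$ denotes the $\delta$-neighbourhood of $K$, and the Minkowski dimensions are read off from $|K^\delta|\approx \delta^{d-\dim}$. (I read $\sphere^{d}$ in the statement as $\sphere^{d-1}$, the sphere of directions in $\R^d$.)

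\textbf{Step 1: choosing the tubes.} Fix $a>1$ and a large constant $C$, to be chosen depending on $a$ and on the Lipschitz constant $L$ of $p$. Let $\{u_j\}$ be a maximal $C\delta$-separated subset of $\sphere^{d-1}$, so that there are $N\simeq \delta^{-(d-1)}$ points. Let $R_j$ be the $\delta$-tube with axis $p(u_j)+[0,1]u_j$. Each $R_j$ lies in $K^\delta$. Hence
\[ \frac{|\bigcup_j R_j|}{\sum_j |R_j|} \leq \frac{|K^\delta|}{N c_d\delta^{d-1}} \lesssim |K^\delta|. \]
Once disjointness of the translates is established, this gives $\fonction_d^{[a,\infty]}(\delta)\lesssim |K^\delta|$. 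Taking logarithms, dividing by $\log\delta<0$, and using $\liminf/\limsup$, we get
\[ \frac{\log|K^\delta|}{\log \delta}\leq \frac{\log \fonction_d^{[a,\infty]}(\delta)}{\log\delta}+o(1). \]
The upper Minkowski dimension is $d-\liminf \log|K^\delta|/\log\delta$, and the lower Minkowski dimension is $d-\limsup \log|K^\delta|/\log\delta$. This yields both bounds for each $a$, and then we take the infimum over $a>1$.

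\textbf{Step 2: disjointness, the main obstacle.} Take $i\neq j$ and set $\theta=|u_i-u_j|\geq C\delta$. Suppose a point $x$ lies in $\overline{R_i}^{[a,\infty]}\cap \overline{R_j}^{[a,\infty]}$. Then $x=p(u_i)+s u_i+O(\delta)=p(u_j)+tu_j+O(\delta)$ with $s,t\geq a$. Subtracting gives
\[ s u_i - t u_j = p(u_j)-p(u_i)+O(\delta), \]
whose norm is at most $L\theta+2\delta\leq (L+1)\theta$.

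On the other hand, $|su_i-tu_j|\geq \min(s,t)\,|u_i-u_j|\cdot c$ for some absolute $c>0$; more precisely $|su_i-tu_j|\gtrsim \max(|s-t|,\min(s,t)\theta)$. This yields $a\theta c\leq (L+1)\theta$, which is a contradiction as soon as $a>(L+1)/c$.

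So disjointness holds only for $a$ large compared with $L$. This is exactly the caveat in the preceding discussion: the argument requires $d(R,R')\leq C\,d(u,u')$ together with $a$ large. I expect this to be the delicate point.

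\textbf{Step 3: passing from large $a$ to the statement.} The statement takes the infimum over $a>1$, and a bound for one admissible $a$ suffices, since the infimum is at most the value at that $a$. To make $a$ effectively large without changing the Lipschitz constant, I would instead rescale the problem. Shrink the tubes to segments of length $\eta$: replace $[0,1]u$ by a subsegment of length $\eta$, still contained in $K$. After dilating by $1/\eta$, the map $p/\eta$ has Lipschitz constant $L/\eta$, which unfortunately gets worse. So instead I would just accept a fixed large $a_0=a_0(L)$ and note that
\[ \frac{\log\fonction^{[a,\infty]}_d}{\log\delta} \]
is monotone in $a$, because a larger $a$ gives a weaker constraint and hence smaller $\fonction$. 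Then $\inf_{a>1}$ is achieved as $a\to\infty$, and $a_0$ suffices.

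Finally, tubes of radius $\delta$ with separation $C\delta$ change $N$ only by a constant factor, which is harmless after taking logarithms.
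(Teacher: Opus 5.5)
Your Steps 1 and 2 follow the paper's argument. The $\delta$-tubes around the segments $p(u_j)+[0,1]u_j$, with separated directions, lie in $K^\delta$, so the ratio in \eqref{eq:gooddeltatubes} is $\lesssim |K^\delta|$. Comparing $|p(u_i)-p(u_j)|\le L\theta$ with $|su_i-tu_j|\ge\sqrt{st}\,\theta\ge a\theta$ then shows that the $\overline{R_j}^{[a,\infty]}$ are pairwise disjoint once $a$ exceeds some $a_0(L)$. The paper likewise only claims disjointness ``if $a$ is large enough''.

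\textbf{The gap is in Step 3: your monotonicity claim is backwards.} A larger $a$ is indeed a weaker constraint, so $\fonction_d^{[a,\infty]}(\delta)\le 1$ is non-increasing in $a$. But since $\log\delta<0$, the quantity $X_a:=\liminf_{\delta\to 0}\log\fonction_d^{[a,\infty]}(\delta)/\log\delta$ is then non-decreasing in $a$. Hence $\inf_{a>1}X_a=\lim_{a\to 1^+}X_a$, not a limit as $a\to\infty$. So $d_+=\sup_{a>1}(d-X_a)$ is the strongest of the bounds $d-X_a$. The bound you prove, $d-X_{a_0}$ with $a_0=a_0(L)$ large, is weaker than $d-X_a$ for every $a<a_0$.

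Your first justification has the same reversal. From $\inf_a X_a\le X_{a_0}$ you get $d_+\ge d-X_{a_0}$. Combined with $\overline{\dim}_M K\ge d-X_{a_0}$, this says nothing about $\overline{\dim}_M K$ versus $d_+$. The same applies to $d_-$ with $\limsup$.

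Nor can Step 2 simply be run at $a$ close to $1$. For $p(u)=-\lambda u$ (Lipschitz constant $\lambda$), every half-tube $\overline{R_j}^{[a,\infty]}$ contains the origin as soon as $a\le\lambda$. So what your argument, and the paper's sketch, actually establishes is $\overline{\dim}_M K\ge d-X_{a_0(L)}\ge d-\sup_{a>1}X_a$, and similarly for the lower dimension. That is, it proves the statement with $\sup_{a>1}$ (equivalently $a\to\infty$) in place of $\inf_{a>1}$.

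That version is enough for the paper's remark that Conjecture~\ref{conj:Kakeya} for $\fonction_d^{[a,b]}$ and all $1<a<b$ forces such Kakeya sets to have upper Minkowski dimension $d$. To reach the statement as literally written, you would need a new ingredient that no monotonicity argument can supply. One option is a choice of tubes whose $[a,\infty]$-translates are disjoint for every $a>1$. The other is a comparison of $\fonction_d^{[a,\infty]}$ across different values of $a$.
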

In particular, Conjecture~\ref{conj:Kakeya} for $\fonction_d^{[a,b]}$ and for all $1<a<b$ implies that every such Kakeya set has Minkowski upper dimension $d$. Perhaps the same holds for sticky Kakeya sets as defined in \cite{Wang_2025}, and maybe even for all Kakeya sets, but we leave that question to experts.

 %% To state it, we need to introduce, for every $d \geq 2$, some function $\fonction_d\colon (0,1) \to (0,1)$ that measures the strength of Besicovich sets in $\R^d$ (see the beginning of Section~\ref{sec:quantitative} CHANGE!). Let us briefly recall that Besicovich's theorem is that $\lim_{\delta \to 0} \fonction_d(\delta)=0$, and more precisely Keich's construction of Besicovich sets asserts that
%% \[ \fonction_d(\delta) \leq \frac{C}{\log(2/\delta)},\]
%% whereas Conjecture~\ref{conj:Kakeya} is equivalent to that
%% \[ \forall \varepsilon>0, \sup_{\delta \in (0,1)} \delta^{-\varepsilon} \fonction_d(\delta)>0.\]
%% \begin{proof}[Proof of the implication] Assume that 
%% \[ \forall \varepsilon>0, \sup_{\delta \in (0,1)} \delta^{-\varepsilon} \fonction_d(\delta)>0.\]
%% We prove that every Kakeya set in $\R^d$ has upper Minkowski dimension $d$. Let $X \subset \R^d$ be a Kakeya set, and $X(\delta)$ its $\delta$-neighbourhood. We have to prove that $\limsup \frac{\vol(X(\delta))}{- \log \delta} \geq 0$. Let $\varepsilon>0$. Without loss of generality we can assume that $X$ is a union of unit segments whose center belongs to $[-1/2,1/2]^d$ (indeed, if for $k \in \Z^d$ we denote by $C(k,X)$ the union of all unit intervals included in $X$ and whose center belongs to $k+[-1/2,1/2]^d$, then $Y:=\bigcup_{x \in \Z^d} C(k,X)-k$ is a Kakeya set with the required properties, and $\vol(Y(\delta)) \leq 2^d \vol(X(\delta))$).
%% \end{proof}

\subsection{Besov spaces and Littlewood-Paley decomposition}\label{sec:LP}

Write $\schwartz(\R)$ for the Schwartz space on $\R$.
  
Let $w_0,w \in C^\infty_c(\R)$ be nonnegative $C^\infty$ functions with compact support contained in $[-2,2]$ and $[\frac 1 2,2]$ respectively and such that, if for $n\geq 1$ we set $w_n(x) = w(|x|/2^n)$ we have
\begin{equation}\label{eq:Littlewood-Paley} \forall x \in \R, w_0(x) + \sum_{n\geq 1} w_n(x)=1.
\end{equation}
  Let $W_n$ be the Schwartz function whose Fourier transform is $w_n$. The Besov space $B_{p,q}^s(\R)$ is defined as the space of tempered distributions such that $W_n \ast f \in L_p(\R)$ for all $n$ and
  \[ (2^{ns}\|W_n\ast f\|_p)_{n\geq 0} \in \ell_q.\]
  When $q=\infty$, we define $b_{p,\infty}^s(\R)$ as the subspace of $B_{p,\infty}^s(\R)$ satisfying
\[ (2^{ns}\|W_n\ast f\|_p)_{n\geq 0} \in c_0.\]
It is very elementary that bounded continuous functions are in $b_{\infty,\infty}^0(\R) \cap L_\infty(\R)$. The example below shows that the inclusion is strict. It is due to Brezis and Nirenberg, it was kindly communicated to me by Liding Yao on Mathoverflow.
\begin{example}\cite[Example 4]{MR1354598}\label{ex:sinloglog} The function $f(t) = \sin(\log |\log |t||) \varphi(t)$, where $\varphi$ is a $C^\infty$ function with support contained in $(-1,1)$ and $\varphi(0)>0$ is bounded measurable, belongs to $b_{\infty,\infty}^0(\R)$ but is not continuous.
\end{example}
In the next statement, $\sigma(L_\infty,L_1)$ is the weak-* topology on $L_\infty(\R)$.
\begin{prop}\label{prop:characterization_of_binftyinfty0} Let $f \in L_\infty(\R)$. The following are equivalent:
  \begin{enumerate}
  \item\label{item:besov0} $f \in b_{\infty,\infty}^0(\R)$,
  \item\label{item:w*cv} all $\sigma(L_\infty,L_1)$-limits of sequences of the form $f(s_n + \frac{\cdot}{r_n})$ with $s_n,r_n \in \R$ and $r_n \to \infty$ are constants.
  \item\label{item:Zygmund} The function $F(t)=\int_0^t f(s) ds$ is smooth in Zygmund's sense:
    \begin{equation}\label{eq:zygmund} \lim_{h \to 0}\sup_x \frac{|F(x+h)+F(x-h)-2F(x)|}{h}=0.
  \end{equation}
  \end{enumerate}
\end{prop}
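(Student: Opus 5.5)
I will prove the cycle \eqref{item:besov0}$\Rightarrow$\eqref{item:w*cv}$\Rightarrow$\eqref{item:besov0}, and then \eqref{item:besov0}$\Leftrightarrow$\eqref{item:Zygmund}. All three conditions are invariant under translations and dilations $f\mapsto f(s+\cdot/r)$ with $r\geq 1$, so the argument consists in comparing them at a single scale.

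\emph{\eqref{item:besov0}$\Rightarrow$\eqref{item:w*cv}.} Let $g_k=f(s_k+\cdot/r_k)$ converge weak-* to $g$. Fix $\psi\in\schwartz(\R)$ with $\hat\psi$ supported in a compact set $K\subset\R\setminus\{0\}$. Then $\int g_k\psi=\int f(s_k+x/r_k)\psi(x)dx$ is a pairing of $f$ against a function whose Fourier transform lives at frequencies of size $\approx r_k$. Since $\sum_n w_n=1$ and only boundedly many $w_n$ meet $r_kK$, we can write $\int g_k\psi$ as a finite sum of terms $\int (W_n\ast f)\cdot(\text{rescaled }\psi)$, each bounded by $\|W_n\ast f\|_\infty\|\psi\|_1$ with $2^n\approx r_k\to\infty$. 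These tend to $0$. Hence $\hat g$ vanishes on $\R\setminus\{0\}$, so $\hat g$ is supported at $0$ and $g$ is a polynomial. Being bounded, $g$ is constant.

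\emph{\eqref{item:w*cv}$\Rightarrow$\eqref{item:besov0}.} Argue by contradiction: suppose $|W_{n_k}\ast f(t_k)|\geq\eta>0$ with $n_k\to\infty$. Set $g_k=f(t_k+2^{-n_k}\cdot)$. Since $W_n(x)=2^{n}W_1'(2^{n}x)$ for a fixed Schwartz function $W'$ (with $\hat W'=w$), we get $W_{n_k}\ast f(t_k)=\int g_k(x)W'(-x)\,dx$ up to normalization. By Banach--Alaoglu (the unit ball of $L_\infty$ is weak-* sequentially compact since $L_1$ is separable), pass to a subsequence with $g_k\to g$ weak-*. Then $|\int gW'(-\cdot)|\geq\eta$, while $g$ is constant by \eqref{item:w*cv} and $\int W'=w(0)=0$. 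This is a contradiction.

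\emph{\eqref{item:besov0}$\Leftrightarrow$\eqref{item:Zygmund}.} The expression $\frac{F(x+h)+F(x-h)-2F(x)}{h}$ equals $f\ast\rho_h(x)$ with $\rho_h=h^{-1}(\mathbf 1_{[-h,0]}-\mathbf 1_{[0,h]})$ up to sign. This $\rho_h$ is an $L_1$-normalized dilate of a fixed mean-zero kernel $\rho$, with $\hat\rho(\xi)=O(|\xi|)$ near $0$ but only $O(1/|\xi|)$ at infinity. I would use the weak-* criterion \eqref{item:w*cv} rather than Littlewood--Paley sums, because $\rho$ is not Schwartz.

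For \eqref{item:w*cv}$\Rightarrow$\eqref{item:Zygmund}: if $|f\ast\rho_{h_k}(x_k)|\geq\eta$ with $h_k\to0$, rescale by $h_k$ around $x_k$ and extract a weak-* limit $g$. Since $\rho\in L_1$, we get $|\int g\rho(-\cdot)|\geq\eta$, yet $g$ is constant and $\int\rho=0$, a contradiction.

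For \eqref{item:Zygmund}$\Rightarrow$\eqref{item:w*cv}: let $g$ be a weak-* limit of rescalings, with $G$ its primitive. The Zygmund quotients of $G$ at every fixed $h$ vanish, since they are limits of quotients of $F$ at scales $h/r_k\to0$. So $G(x+h)+G(x-h)=2G(x)$ for all $x,h$. As $G$ is continuous, it is affine, and $g$ is constant. This direction is easy.

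\emph{Main obstacle.} The delicate step is the first implication, namely turning the $c_0$ decay of $\|W_n\ast f\|_\infty$ into the vanishing of $\int g_k\psi$. The scaling $r_k$ need not be a power of $2$, and the paired function is not exactly one Littlewood--Paley piece. To handle this, I would write $\psi_k=\psi_k\ast\sum_{|n-\log_2 r_k|\leq C}W_n$, which is legitimate because $\sum_n w_n=1$ on $r_kK$. Then $\int f\psi_k=\sum_n\int (W_n\ast f)\,\tilde\psi_k$, with boundedly many terms and uniform $L_1$ bounds on $\psi_k$.
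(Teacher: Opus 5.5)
Your proposal is correct, and it is organized differently from the paper's proof.

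\textbf{The paper's route.}
\begin{itemize}
\item For (1)$\Leftrightarrow$(3), it applies Lemma~\ref{lem:Bernstein} to $F_n=W_n\ast F$. This turns $\|W_n\ast f\|_\infty\to 0$ into $2^n\|W_n\ast F\|_\infty\to 0$. It then invokes the classical Littlewood--Paley characterization of Zygmund smoothness: a citation for one direction, and the Taylor/Bernstein estimate \eqref{eq:from_LP_decay_to_modulus} for the other.
\item For (2)$\Leftrightarrow$(3), it combines weak-* compactness with Lemma~\ref{lem:density}. That lemma says the differences $\frac1h(\chi_{[x,x+h]}-\chi_{[x-h,x]})$ span a dense subspace of $L_1^0$, and is proved via Hahn--Banach and martingale convergence. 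With it, (2) becomes literally the uniform vanishing of the Zygmund quotients.
\end{itemize}

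\textbf{Your route.}
\begin{itemize}
\item You link (1) and (2) directly. For (1)$\Rightarrow$(2), frequency localization shows that every weak-* limit $g$ has $\widehat g$ supported at $0$, so $g$ is a bounded polynomial, hence constant.
\item For (2)$\Rightarrow$(1), you test the rescaled functions against the mean-zero kernel $V$ with $\widehat V=w(|\cdot|)$.
\item In place of the density lemma, you observe that a continuous solution of $G(x+h)+G(x-h)=2G(x)$ is affine. This is the same ``annihilator of these differences is constant'' fact, obtained more elementarily.
\end{itemize}

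\textbf{Comparison.} Your argument is self-contained and purely soft; it needs no Zygmund-class theory from the literature. What it gives up is any rate. The paper's proof yields the quantitative inequality \eqref{eq:from_LP_decay_to_modulus}, which bounds the Zygmund quotients of $F$ by Littlewood--Paley pieces. That inequality is reused later to obtain the explicit modulus $|\log h|^{-|\frac1p-\frac12|}$ in Theorem~\ref{thm:Lpbddness_LP_quantitative}, and a compactness argument cannot supply it.

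\textbf{Minor slip.} The dilation identity should read $W_n(x)=2^nV(2^nx)$ with $\widehat V=w(|\cdot|)$, not ``$W_1'$'' with $\widehat{W'}=w$. This does not affect the argument.
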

The proof relies on the following lemma:
\begin{lem}\label{lem:density} The linear space spanned by the functions
  \[ \{ \frac{1}{h}(\chi_{[x,x+h]} - \chi_{[x,x-h]}) \mid x\in \R, h \in (0,1]\}\]
  is dense in $L_1^0(\R)$.
\end{lem}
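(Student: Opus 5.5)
The plan is to argue by duality (Hahn--Banach). The functions $\frac 1 h(\chi_{[x,x+h]}-\chi_{[x-h,x]})$ are integrable with integral $0$, so they lie in $L_1^0(\R)=\{u\in L_1(\R)\mid \int u=0\}$. This space is the kernel of the functional $u\mapsto\int u$, so its dual is $L_\infty(\R)/\C\mathbf 1$. Hence it suffices to show the following: if $g\in L_\infty(\R)$ satisfies
\[ \int_x^{x+h} g(s)\,ds=\int_{x-h}^x g(s)\,ds \quad \text{for all } x\in\R,\ h\in(0,1], \]
then $g$ is a.e. equal to a constant. Such a $g$ annihilates all of $L_1^0(\R)$, so it is zero as a functional, and the span is dense.

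Let $G(t)=\int_0^t g(s)\,ds$, which is Lipschitz and in particular continuous. The hypothesis becomes the local mean value identity
\[ G(x+h)+G(x-h)=2G(x)\quad \text{for all } x\in\R,\ 0<h\leq 1. \]
Next I show that $G$ is affine on every interval $[a,b]$ of length at most $1$. Let $H=G-L$, where $L$ is the affine function agreeing with $G$ at $a$ and $b$. Then $H$ is continuous, satisfies the same identity (since affine functions do), and vanishes at $a$ and $b$.

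Suppose $M=\max_{[a,b]}H>0$. The set $K=\{x\in[a,b]\mid H(x)=M\}$ is closed, nonempty, and contained in $(a,b)$. Take $x_0=\min K$ and $h=x_0-a\in(0,1]$. Then $2M=H(x_0+h)+H(x_0-h)=H(x_0+h)+H(a)=H(x_0+h)\leq M$, a contradiction. The same argument applied to $-H$ shows $\min H\geq 0$, so $H\equiv0$ on $[a,b]$. This maximum-principle step is the only place needing care; the key point is to choose $h$ so that $x_0-h$ lands exactly on an endpoint. Since intervals of length $1$ cover $\R$ and overlap, the affine pieces coincide on overlaps, so $G$ is affine on all of $\R$.

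Finally, $g=G'$ a.e. is constant, which completes the proof.
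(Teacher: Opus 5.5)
\textbf{A gap in the maximum-principle step.} The flaw is at the point you identify as the only one needing care. With $h=x_0-a$ you do get $x_0-h=a$. But the inequality $H(x_0+h)\le M$ needs $x_0+h=2x_0-a\in[a,b]$, i.e.\ $x_0\le (a+b)/2$. Suppose the first maximum point lies in the right half of the interval, say $[a,b]=[0,1]$ and $x_0=0.8$, so that $x_0+h=1.6$. Then $M$, being a maximum over $[a,b]$ only, gives no control on $H(x_0+h)$, and no contradiction follows.

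The repair is one line: take $h=\min(x_0-a,\,b-x_0)\in(0,1/2]$. Then both points $x_0\pm h$ lie in $[a,b]$ and one of them is an endpoint, so $2M\le M+0$. With this choice you do not even need $x_0=\min K$. Alternatively, keep $x_0=\min K$ and take any $0<h\le\min(x_0-a,b-x_0)$, so that $H(x_0-h)<M$ and $H(x_0+h)\le M$.

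You should also say that for complex-valued $g$ you apply the argument to its real and imaginary parts, which satisfy the same identities. With these points fixed, the rest of your argument is correct: the Hahn--Banach reduction, the passage to the midpoint identity for $G$, the gluing over overlapping unit intervals, and $g=G'$ a.e.

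\textbf{Comparison with the paper's route.} Your route differs from the paper's. The paper fixes a grid $x_0+2^{-n}\Z$ and notes that the annihilation condition, at grid midpoints with $h=2^{-n-1}$, says exactly that the conditional expectations $f_n$ on the dyadic $\sigma$-algebras satisfy $f_{n+1}=f_n$. The martingale convergence theorem then gives $f=f_0$, so $f$ is constant on each cell $[x_0+k,x_0+k+1)$, and running this for $x_0=0$ and $x_0=1/2$ glues the constants together. You instead integrate once and reduce to the statement that a continuous function satisfying the local midpoint identity is affine, proved by a maximum principle.

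Your argument is more elementary: it uses only continuity of the primitive and Lebesgue differentiation, with no martingale theorem. The paper's argument uses only dyadic steps $h=2^{-n-1}$ at the points of two shifted grids, so it proves density of a countable subfamily. Your maximum-principle argument genuinely needs arbitrary $h$, since $h$ is dictated by the unknown location of the maximum. You could recover the paper's sharper statement within your own framework: the dyadic midpoint identities force $G$ to agree with linear interpolation at all grid points of each unit cell, and continuity of $G$ finishes. This is essentially the paper's argument in integrated form.
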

\begin{proof}  By Hahn-Banach, we have to prove that the only functions $f \in L_\infty(\R)$ (the dual of $L_1(\R)$) such that
  \[ \int \frac{1}{h}(\chi_{[x,x+h]} - \chi_{[x-h,x]}) f=0\]
  for every $x \in \R$, $h \in (0,1]$ are the almost everywhere constant functions. Fix $x_0\in \R$ and for $n \in \N$ let $\mathcal{A}_n = \sigma([x_0+k2^{-n}, x_0+(k+1) 2^{-n}[, k \in \Z)$. Let $f_n$ be the conditional expectation of $f$ with respect to $\mathcal{A}_n$, so that the preceding says that $f_n=f_{n+1}$ for every $n\geq 0$. As a consequence, we get that $f_0 = \lim_n f_n = f$ almost everywhere by the martingale convergence theorem. Applying this for $x_0=0$ and $x_0=\frac 1 2$, we see that this forces $f$ to be constant both on intervals $[k,k+1]$ and $[k-1/2,k+1/2]$ for $k \in \Z$, which forces $f$ to be constant.
\end{proof}
We also need the following standard fact, essentially Bernstein's inequality:
\begin{lem}\label{lem:Bernstein}
  There is a constant $C$ such that for any $r>0$ and any function $g \in L_1(\R)$ whose Fourier transform is supported in $[-2r,-r/2]\cup[r/2,2r]$,
  \[ \frac{r}{C} \|g\|_\infty \leq \|g'\|_\infty \leq C r \|g\|_\infty.\]
\end{lem}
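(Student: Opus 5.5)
The upper bound comes from the Fourier-support hypothesis and Young's inequality with a rescaled Schwartz kernel; the lower bound comes from applying the same argument to an antiderivative of $g$.

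\emph{Upper bound.} Fix $\psi \in \schwartz(\R)$ whose Fourier transform equals $1$ on $[-2,-1/2]\cup[1/2,2]$ and vanishes near $0$ (for instance supported in $[-4,-1/4]\cup[1/4,4]$). Put $\psi_r(x) = r\psi(rx)$, so $\widehat{\psi_r}(\xi)=\widehat\psi(\xi/r)$. Since $\hat g$ is supported where $\widehat{\psi_r}=1$, we have $g = \psi_r \ast g$. Differentiating gives $g' = \psi_r' \ast g$, and $\psi_r'(x) = r^2\psi'(rx)$ has $L_1$ norm $r\|\psi'\|_1$. Young's inequality then yields
\[ \|g'\|_\infty \leq r\|\psi'\|_1 \|g\|_\infty .\]

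\emph{Lower bound.} The same argument is applied to an antiderivative. Let $\chi\in\schwartz(\R)$ be defined by $\hat\chi(\xi) = \widehat\psi(\xi)/(2i\pi \xi)$. This is a Schwartz function on the Fourier side because $\widehat\psi$ vanishes near $0$. Set $\chi_r(x)=\chi(rx)$, so that $\widehat{\chi_r}(\xi) = r^{-1}\hat\chi(\xi/r)$. Then
\[ \widehat{\chi_r\ast g'}(\xi) = r^{-1}\frac{\widehat\psi(\xi/r)}{2i\pi\xi/r}\,2i\pi\xi\,\hat g(\xi) = \widehat\psi(\xi/r)\hat g(\xi) = \hat g(\xi),\]
so $g = \chi_r \ast g'$. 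Since $\|\chi_r\|_1 = r^{-1}\|\chi\|_1$, Young's inequality gives $\|g\|_\infty \leq r^{-1}\|\chi\|_1\|g'\|_\infty$.

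Both constants are independent of $r$ by scaling, so one may take $C=\max(\|\psi'\|_1,\|\chi\|_1)$.

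\emph{Justification of the identities.} For $g\in L_1$ these hold as tempered distributions. If $\|g\|_\infty=\infty$ or $\|g'\|_\infty=\infty$, the inequality is read accordingly: $g=\psi_r\ast g$ shows $g$ is smooth, and both $g$ and $g'$ are bounded by $\|\psi_r\|_\infty\|g\|_1$-type estimates. The main point to verify carefully is that $\hat\chi$ is Schwartz, which holds because $\widehat\psi$ vanishes near the origin; the rest is routine.
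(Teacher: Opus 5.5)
Your proof is correct and is essentially the paper's argument: the paper also writes $g'=\phi\ast g$ and $g=\psi\ast g'$, using Schwartz kernels whose Fourier transforms equal $2i\pi\xi$ and $1/(2i\pi\xi)$ on the frequency annulus, and concludes with Young's inequality. The only difference is cosmetic: the paper first rescales to $r=1$, whereas you carry the dilated kernels $\psi_r,\chi_r$ through the argument, which amounts to the same thing.
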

\begin{proof} Considering the function $x\mapsto g(x/r)$, we can assume that $r=1$. Let $\phi,\psi \in \mathcal{S}(\R)$ such that $\hat \phi(t) = -2i\pi t$ and $\hat \psi(t) = \frac{-1}{2i\pi t}$ on $[-2,-1/2] \cup [1/2,2]$. The functions $g'$ and $\phi \ast g$ have the same Fourier transform so they are equal, and similarly $g =\psi \ast g'$, so the Lemma holds with $C=\max(\|\psi\|_1,\|\psi\|_1)$.
\end{proof}
\begin{proof}[Proof of Proposition~\ref{prop:characterization_of_binftyinfty0}] Let $f \in L_\infty(\R)$ and $F$ be the primitive of $f$ vanishing at $0$, and $F_n=W_n \ast F$. It follows from Lemma~\ref{lem:Bernstein} with $g= F_n$ and $n\geq 1$ that $f \in b_{\infty,\infty}^0(\R)$ if and only if $\lim_n 2^{n} \|W_n \ast F\|_\infty =0$, which is well-known to be equivalent to \eqref{eq:zygmund}. We recall the argument for completeness. The implication $\lim_n 2^{n} \|W_n \ast F\|_\infty =0 \implies$\eqref{eq:zygmund} is for example \cite[Corollary 2.7]{MR2628799}. For the converse write $F = \sum_{n\geq 0}F_n$. Using Taylor expansion and Bernstein's inequality as in Lemma~\ref{lem:Bernstein}, bound
  \[ |F_n(x+h)+F_n(x-h) - 2 F_n(x)| \leq \min(4 \|F_n\|_\infty, h^2 \|F_n''\|_\infty) \leq \min(4 , h^2C^2 2^{2n}) \|F_n\|_\infty\]
  to obtain
\begin{multline}\label{eq:from_LP_decay_to_modulus} \sup_x \frac{|F(x+h)+F(x-h) - 2 F(x)|}{h} \leq \sum_{n \geq 0} \min(\frac{4}{h} , h C^2 2^{2n}) \|W_n \ast F\|_\infty.
\end{multline}

   On the other hand, for the natural duality $L_1(\R)^*=L_\infty(\R)$, $L_1^0(\R)$ is the annihilator in $L_1(\R)$ of the constants. Together with the weak-* compactness of the unit ball of $L_\infty$, this implies that a bounded net $(h_i)$ in $L_\infty(\R)$ has all its accumulation points constant if and only if $\lim_i \int h_i g=0$ for every $g \in L_1^0$. We can even take $g$ in a subset of $L_1^0$ spanning a dense subspace, for example the functions given by Lemma~\ref{lem:density}. Here we see that \eqref{item:w*cv} holds if and only if for every $x\in \R, h \in (0,1]$,
    \[ \lim_{r\to 0} \sup_y \int f(y+\frac{t}{r}) \frac{1}{h}(\chi_{[x,x+h]}(t) - \chi_{[x,x-h]}(t)) dt = 0,\]
    This is indeed equivalent to \eqref{eq:zygmund} because
    \[ \int f(y+\frac{t}{r}) \frac{1}{h}(\chi_{[x,x+h]}(t) - \chi_{[x,x-h]}(t)) dt = \frac{F(z+h') + F(z-h')-2F(z)}{h'}\]
    with $z=y+\frac{x}{r}$ and $h'=\frac{h}{r}$.
\end{proof}
We end this subsection with a standard fact that we will use later, and that generalizes the classical fact that $B_{\infty,\infty}^\alpha(\R)$ is the space of H\"older-continuous functions if $0<\alpha<1$.
\begin{lem}\label{lem:continuity_from_LittlewoodPaley} Let $\varphi:\R\to \R$ be measurable and locally integrable such that $\sum_n \|W_n \ast \varphi\|_\infty<\infty$. Then $\varphi$ is essentially bounded continuous, and for almost every $s,t$,
  \[ |\varphi(s)-\varphi(t)|\lesssim \sum_n \min(2^n|s-t|,1) \|W_n \ast \varphi\|_\infty. \]
\end{lem}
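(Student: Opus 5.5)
The plan is to write $\varphi$ as the sum of its Littlewood--Paley pieces and estimate each piece separately. Set $\varphi_n = W_n\ast\varphi$. By \eqref{eq:Littlewood-Paley}, $\sum_n w_n = 1$, so $\varphi = \sum_{n\geq 0}\varphi_n$ in the sense of tempered distributions. This requires $\varphi$ to be tempered; local integrability alone does not give that, so I will either assume it or localise first. The localisation works as follows: multiply $\varphi$ by a bump $\chi\in C_c^\infty$ and note that $\|W_n\ast(\chi\varphi)\|_\infty$ is controlled by $\sum_k \|W_k\ast\varphi\|_\infty$ times constants, with good decay in $|n-k|$. This is a standard paraproduct-type estimate and I expect it to be only a technical point.

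The convergence step comes next. Each $\varphi_n$ is a smooth bounded function because $W_n\in\schwartz(\R)$. The hypothesis $\sum_n\|\varphi_n\|_\infty<\infty$ then makes the series $\sum_n\varphi_n$ converge uniformly to a bounded continuous function $\psi$. Uniform convergence implies convergence in $\schwartz'(\R)$, so $\psi=\varphi$ as distributions. Since $\varphi$ is locally integrable, this gives $\varphi=\psi$ almost everywhere. In particular $\varphi$ is essentially bounded, with $\|\varphi\|_\infty\le\sum_n\|\varphi_n\|_\infty$, and it agrees almost everywhere with a continuous function.

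For the modulus of continuity, I take $s,t$ with $\varphi(s)=\psi(s)$ and $\varphi(t)=\psi(t)$ (this holds for almost every pair) and bound
\[ |\psi(s)-\psi(t)| \le \sum_n |\varphi_n(s)-\varphi_n(t)|. \]
Each term is at most $\min\bigl(2\|\varphi_n\|_\infty,\ |s-t|\,\|\varphi_n'\|_\infty\bigr)$. For $n\ge1$ the Fourier transform of $\varphi_n$ is supported in $[-2^{n+1},-2^{n-1}]\cup[2^{n-1},2^{n+1}]$, so the Bernstein inequality gives $\|\varphi_n'\|_\infty\le C2^n\|\varphi_n\|_\infty$. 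Lemma~\ref{lem:Bernstein} is stated for $g\in L_1$, but its proof only uses the convolution identity $\varphi_n'=\phi\ast\varphi_n$ with $\phi\in\schwartz$, and that identity holds for bounded $\varphi_n$ as well. For $n=0$ the spectrum lies in $[-2,2]$, and the same argument with a Schwartz $\phi$ satisfying $\hat\phi(\xi)=-2i\pi\xi$ on $[-2,2]$ gives $\|\varphi_0'\|_\infty\le C\|\varphi_0\|_\infty$. Summing these bounds yields $|\varphi(s)-\varphi(t)|\lesssim\sum_n\min(2^n|s-t|,1)\,\|W_n\ast\varphi\|_\infty$.

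The argument is routine throughout. The only step needing care is the first one: justifying that $\varphi$ is tempered so that the Littlewood--Paley reconstruction applies. If the hypothesis is read as implicitly including temperedness, as the definition of $W_n\ast\varphi$ requires, there is no real obstacle.
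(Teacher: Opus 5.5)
Your argument is correct and is the paper's proof: write $\varphi=\sum_n W_n\ast\varphi$, bound each piece by $\min\bigl(2\|W_n\ast\varphi\|_\infty,\,|s-t|\,\|(W_n\ast\varphi)'\|_\infty\bigr)$ and apply Bernstein's inequality. You also spell out details the paper leaves implicit: the uniform convergence, the a.e.\ identification, Bernstein for bounded (not $L_1$) functions, and the $n=0$ case. Reading temperedness into the hypothesis is the right call, since the paper only applies the lemma to bounded $\varphi$ such as $m\circ\exp$ and $m\circ\cos$; your localisation alternative would be circular, because controlling $W_n\ast(\chi\varphi)$ through the $W_k\ast\varphi$ already presupposes the reconstruction $\varphi=\sum_k W_k\ast\varphi$.
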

\begin{proof}
  $\varphi$ decomposes as $\sum_n W_n \ast \varphi$, and for $n\geq 0$
  \begin{align*} |(W_n \ast \varphi)(s) - (W_n \ast \varphi)(t)| &\leq \min(2 \|W_n \ast \varphi\|_\infty, \|(W_n \ast \varphi)'\|_\infty|s-t|)\\
    & \leq \min(2,2^n C |s-t|) \|W_n \ast \varphi\|_\infty
  \end{align*}
by Bernstein's inequality (as in Lemma~\ref{lem:Bernstein}).
%and $W'_n = W_n'\ast(W_{n-1}+W_n+W_{n-1})$, so the lemma follows from the bound
%\begin{align*} \|W_n' \ast \varphi\|_\infty & \leq  \|W_n'\|_1 \|(W_{n-1}+W_n+W_{n-1}) \ast \varphi\|_\infty\\ & \lesssim 2^n \sum_{n-1 \leq k \leq n+1} \|W_k \ast \varphi\|_\infty.\qedhere
%\end{align*}
\end{proof}
\subsection{Partitions of unity arguments}
If $f:\R\to \R_+$ is a locally integrable function, let $Mf$ denote Hardy-Littlewood maximal function
  \[ M f(x) = \sup_{t>0} \frac{1}{2t} \int_{-t}^t f(x-s) ds.\]
  We will need the easy inequality for $g \in \schwartz(\R)$:
  \begin{equation}\label{eq:HLmaximalfunction} \int |{g}(s)| f(x+s) ds  \leq \int |g'(t)| \int_{-|t|}^{|t|} f(x-s)ds dt \leq 2\|t g'\|_{L_1} M f(x).
  \end{equation}
  
\begin{prop}\label{prop:from_one_h_to_all} Let $(G_r)_{r>0}$ be a family of functions $G_r:\R\to \R_+$ satisfying the two conditions:
  \begin{itemize} \item for every $\lambda>0$, there is $K_\lambda$ such that $G_{\lambda r} \leq K_\lambda G_r$ for every $r>0$,
  \item there is $K>0$ such that $M G_r \leq K G_r$ for every $r>0$.
  \end{itemize}

  Then for every $h \in C^\infty_c(\R)\setminus\{0\}$, there is a constant $C=C(h,K,K_\lambda)$ such that the following holds: if $F:\R\to \R$ is a locally integrable function such that
  \[ \Big|\int F(x\pm\frac s r) \widehat h(s) ds\Big| \leq G_r(x)\]
  for every $r>0$ and $x \in \R$. Then for every $n \geq 1$,
  \[ |W_n \ast F(x)| \leq C G_{2^n}(x)\]
  and
  \[ |(W_n \ast F)'(x)| \leq 2^n C G_{2^n}(x).\]
\end{prop}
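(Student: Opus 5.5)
The plan is to write each Littlewood--Paley piece $W_n\ast F$ as a finite sum of Schwartz-kernel convolutions of the quantities controlled by the hypothesis, and then to use \eqref{eq:HLmaximalfunction} together with the two assumptions on $(G_r)$.

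\emph{Rewriting the hypothesis.} Substituting $u=s/r$ gives
\[ \int F(x\pm \tfrac s r)\widehat h(s)\,ds = (F\ast H^{\pm}_r)(x),\qquad H^{\pm}_r(u)=r\,\widehat h(\mp r u).\]
The Fourier transform of $H^{\pm}_r$ is $h(\pm\cdot/r)$, up to the reflection fixed by the sign conventions. So the hypothesis reads $|F\ast H^\pm_r|\le G_r$ for all $r>0$, where $\widehat{H^{\pm}_r}$ is $h$ dilated by $r$ and possibly reflected. Since $h\neq 0$ is continuous, there is a compact interval $[a,b]$ with $0<a<b$ on which $|h(\xi)|\ge c>0$ or $|h(-\xi)|\ge c>0$. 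Here we use that if $h$ is nonzero at $0$ it is also nonzero at nearby points $\neq 0$, and the $\pm$ in the statement lets us handle both signs of frequencies. By the same $\pm$ freedom, reflecting $h$ covers both $[a,b]$ and $[-b,-a]$.

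\emph{Partition of unity.} The function $w$ is supported in $\{\tfrac12\le|\xi|\le 2\}$. Cover this set by finitely many intervals $\pm\lambda_j(a,b)$, where $\lambda_1,\dots,\lambda_N>0$ depend only on $a,b$, and choose a smooth partition $w=\sum_j\psi_j$ with $\psi_j\in C_c^\infty(\pm\lambda_j(a,b))$. Set $\theta_j(\xi)=\psi_j(\xi)/h(\pm\xi/\lambda_j)$. This is a fixed function in $C_c^\infty$, because the denominator is bounded below by $c$ on the support of $\psi_j$. Put $r_j=\lambda_j2^n$ and let $\Theta_j$ be the inverse Fourier transform of $\theta_j$, with $\Theta_{j,n}(u)=2^n\Theta_j(2^nu)$. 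Then
\[ w_n(\xi)=\sum_j \theta_j(\xi/2^n)\,h(\pm \xi/r_j), \qquad\text{hence}\qquad W_n\ast F=\sum_j \Theta_{j,n}\ast (H^{\pm}_{r_j}\ast F).\]

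\emph{Pointwise bounds.} The bound \eqref{eq:HLmaximalfunction} is scale invariant: the quantity $\|t\,\Theta_{j,n}'\|_{L_1}=\|t\,\Theta_j'\|_{L_1}$ does not depend on $n$. Therefore
\[|W_n\ast F(x)|\le \sum_j\int|\Theta_{j,n}(u)|\,G_{r_j}(x-u)\,du\le 2\sum_j \|t\Theta_j'\|_1\, MG_{r_j}(x)\le 2K\sum_j\|t\Theta_j'\|_1K_{\lambda_j}\,G_{2^n}(x).\]
The last step uses $MG_r\le KG_r$ and then $G_{\lambda_j 2^n}\le K_{\lambda_j}G_{2^n}$. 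For the derivative, $(W_n\ast F)'=\sum_j \Theta_{j,n}'\ast(H^\pm_{r_j}\ast F)$ with $\Theta_{j,n}'(u)=2^{2n}\Theta_j'(2^nu)$. Applying the same argument with $\Theta_j'$ in place of $\Theta_j$ yields the extra factor $2^n$. The constant $C$ depends only on $h$ (through $a,b,c$ and the $\theta_j$), on $K$, and on the finitely many $K_{\lambda_j}$.

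\emph{Expected obstacle.} The only delicate point is justifying the identity $W_n\ast F=\sum_j\Theta_{j,n}\ast(H^\pm_{r_j}\ast F)$ when $F$ is merely locally integrable. I would interpret it distributionally, assuming $F$ is tempered, which is implicit in the statement since $W_n\ast F$ must make sense. Alternatively, one can verify it directly by Fubini: $W_n=\sum_j\Theta_{j,n}\ast H^\pm_{r_j}$ as Schwartz functions, and the double integrals converge absolutely thanks to the Schwartz decay of the kernels combined with the pointwise bounds $|F\ast H^\pm_{r}|\le G_r$. Everything else is routine bookkeeping of constants.
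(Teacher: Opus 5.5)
Your proof is correct and is essentially the paper's argument made explicit. The paper shows that the set of admissible $h$ is a dilation-stable ideal of $C^\infty_c(\R)$, with stability under multiplication coming, as in your bound, from \eqref{eq:HLmaximalfunction} and $MG_r\le KG_r$, and then invokes Lemma~\ref{lem:ideals_in_Cinftc} to reach $w(|\cdot|)$ and $t\,w(|t|)$; you instead directly divide a partition of unity of $w$ by nonvanishing dilates of $h$.

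One small caveat concerns your alternative Fubini justification. It would need absolute integrability of $|F|$ against the kernels, which the bounds on $F\ast H_r^{\pm}$ do not provide. So keep the distributional reading with $F$ tempered instead; $F$ is bounded in all the paper's applications, and the paper tacitly assumes this too.
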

We will apply this proposition in two different situations. The first, obvious, is when $G_r(x)=A+B/r$. The second is
\begin{example}\label{ex:HardyLittlewood} For every $A,B >0$, the functions $G_r(x) = A+\frac{B}{(r|\sin x|)^{\frac 1 3}}$ satisfies the assumptions of Proposition~\ref{prop:from_one_h_to_all} with constants $K_\lambda$ and $K$ independent from $A,B$.
\end{example}
\begin{proof} The fact that $G_{\lambda r} \leq K_\lambda G_r$ holds with $K_\lambda=\max(\lambda^{1/3},\lambda^{-1/3})$ is clear. For $G_r$ to satisfy $M G_r \leq K G_r$ with constant $K$, it is enough to have $K\geq 1$ and that $Mf(x) \leq K f(x)$, for $f(x) = \frac{1}{|\sin x|^{\frac 1 3}}$. To justify this, by the symmetries of $f$ we can assume that $x \in [0,\pi/2]$. If $0<t\leq x/2$, we have $0<\sin(x/2)\leq \sin(x -s)$ for every $s \in [-t,t]$ and
  \[ \frac{1}{2t} \int_{-t}^t f(x-s) ds \leq f(x/2) =O( f(x)).\]
  If $x/2<t$, we have $[x-t,x+t]\subset [-3t,3t]$ so
  \[ \frac{1}{2t} \int_{-t}^t f(x-s) ds \leq \frac{1}{2t} \int_{-3t}^{3t} f(s) ds= O\Big(1+t^{-1/3}\Big) =O(f(x)).\qedhere\]
\end{proof}

The proof will use the following lemma.
\begin{lem}\label{lem:ideals_in_Cinftc} Let $E \subset C^\infty_c(\R)$ be a nonzero ideal (a vector subspace such that $f g \in E$ for every $f \in C^\infty_c(\R)$ and $g \in E$). If $E$ is stable by dilations $f \mapsto f(\lambda \cdot)$ for every $\lambda \in \R\setminus\{0\}$, then $E$ contains $C^\infty_c(\R\setminus\{0\})$.
\end{lem}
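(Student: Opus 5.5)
Fix a nonzero $g\in E$ and a point $a$ where $g(a)\neq 0$. The plan has three steps: first produce an element of $E$ that is nonvanishing near a point $a\neq 0$, then move that point anywhere in $\R\setminus\{0\}$ by dilations, and finally reach every compactly supported function by a partition of unity.

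\emph{Reduction to $a\neq 0$.} Since $g$ is continuous, we may move $a$ slightly inside the open set $\{g\neq 0\}$. This guarantees $a\neq 0$.

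\emph{Local invertibility at every point of $\R\setminus\{0\}$.} By continuity, $g$ is nonvanishing on a neighbourhood $U=(a-\eta,a+\eta)$ of $a$, with $0\notin U$. For every $\psi\in C^\infty_c(U)$, the function $\psi/g$ is in $C^\infty_c(\R)$, so $\psi=(\psi/g)\,g\in E$. Thus $C^\infty_c(U)\subset E$. Now use dilation stability: for $\lambda\neq 0$, $f\in E$ implies $f(\lambda\,\cdot)\in E$. If $\psi\in C^\infty_c(U)$, then $\psi(\lambda\,\cdot)$ is supported in $\lambda^{-1}U$, and every $\phi\in C^\infty_c(\lambda^{-1}U)$ is of this form. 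Hence $C^\infty_c(\lambda^{-1}U)\subset E$ for every $\lambda\in\R\setminus\{0\}$. Given any $b\neq 0$, choose $\lambda=a/b$; then $\lambda^{-1}U$ is an open interval containing $b$ and not containing $0$. Negative $\lambda$ handles the opposite half-line, so every point of $\R\setminus\{0\}$ has an open neighbourhood $V_b$ with $C^\infty_c(V_b)\subset E$.

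\emph{Partition of unity.} Let $\phi\in C^\infty_c(\R\setminus\{0\})$. Its support $K$ is compact and avoids $0$, so it is covered by finitely many $V_{b_1},\dots,V_{b_k}$. Take a smooth partition of unity $\chi_1,\dots,\chi_k$ subordinate to this cover on a neighbourhood of $K$, with $\chi_i\in C^\infty_c(V_{b_i})$ and $\sum_i\chi_i=1$ on $K$. Then $\phi=\sum_i\chi_i\phi$ with $\chi_i\phi\in C^\infty_c(V_{b_i})\subset E$. Since $E$ is a vector space, $\phi\in E$.

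There is no serious obstacle in this argument. The only point needing care is the first step, making sure the chosen nonvanishing point is not $0$, since dilations fix $0$ and cannot move a neighbourhood of the origin elsewhere. This is also why the conclusion is only about $C^\infty_c(\R\setminus\{0\})$.
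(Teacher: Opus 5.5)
Your proof is correct, and it is organized differently from the paper's. Both arguments rest on the same three ingredients: dividing by a nonvanishing element of $E$, moving sets around by dilations, and compactness of the support. The difference is in how you pass from local to global.

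The paper globalizes before dividing. It replaces a nonzero $f\in E$ by $\bar f f=|f|^2\in E$, which is nonnegative. Finite sums of its dilates then cannot cancel, so for each compact $K\subset\R\setminus\{0\}$ one obtains a single $F\in E$ with $F>0$ on $K$. Every $\phi$ supported in $K$ is then $(\phi/F)\,F\in E$, with only one division.

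You do it the other way round. You divide first, near a single point $a\neq 0$, to get all of $C^\infty_c(U)$. You then transport this local statement by dilations and glue the pieces with a partition of unity.

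The paper's route is shorter and needs no partition of unity, but it relies on the positivity trick to prevent cancellation when summing dilates. Yours avoids positivity entirely; cancellation is handled by the partition of unity instead. It also makes explicit a point that is implicit in the paper: the region where the chosen element is nonvanishing must avoid the origin, since dilations fix $0$.
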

\begin{proof}
  $E$ contains a nonzero nonnegative function (namely $|f|^2= \overline{f}f$ for a nonzero $f \in E$), so using dilations and sums we see that for every compact subset $K \subset \R\setminus\{0\}$, $E$ contains a nonnegative function that is strictly positive on $K$. This implies that $E$ contains all $C^\infty$ functions supported in $K$.
\end{proof}

\begin{proof}[Proof of Proposition~\ref{prop:from_one_h_to_all}] The set $E$ of functions $h \in C^\infty_c(\R)$ for which there is a constant $C$ such that 
  \[ \Big|\int F(x\pm\frac s r) \widehat h(s) ds \Big| \leq C G_r(x)\]
  is clearly a vector space. It is nonzero by assumption. It is stable by dilations by factor $\lambda>0$ be the first hypothesis, and by dilations by factor $\lambda<0$ because the assumption is with $\pm s/r$. We claim that it is also stable by multiplication by $C^\infty_c(\R)$: if $h \in E$ and $g \in C^\infty_c(\R)$ is arbitrary,
  \begin{align*} \Big|\int F(x+\frac s r) \widehat{gh}(s) ds \Big| & = \Big|\iint \widehat{g}(t) \widehat{h}(s-t) F(x + \frac s r) ds dt\Big|\\
    & \leq \int |\widehat{g}(t)| CG_r(x+\frac t r) dt \leq C'G_r(x).
  \end{align*}
  The first inequality is the assumption $h\in E$, and the second \eqref{eq:HLmaximalfunction} with $C'=2C\|t\widehat{g}'\|_{L_1}$. By Lemma~\ref{lem:ideals_in_Cinftc}, we deduce that $E$ contains $C^\infty_c(\R^*)$. In particular, if $w$ is the function as in the definition of Besov spaces, $E$ contains the function $h_1:t\mapsto w(|t|)$ as well as $h_2:t\mapsto t w(|t|)$. This proves the lemma, because $W_n=2^n \widehat{h_1}(2^n \cdot)$ and $W_n'=-2i\pi 2^{2n} \widehat{h_2}(2^n \cdot) $, so
  \begin{align*}
    W_n \ast F(x) & = 2^n \int F(x-t) \widehat{h_1}(2^nt) dt = \int F(x-\frac{t}{2^n}) \widehat{h_1}(t) dt\\
    (W_n\ast F)'(x) & = -2i\pi 2^{2n}\int F(x-t) \widehat{h_2}(2^nt) dt = -2i\pi 2^n \int F(x-\frac{t}{2^n}) \widehat{h_2}(t) dt.\qedhere
  \end{align*}
\end{proof}

\section{From approximation properties of von Neumann algebras to regularity of Schur multipliers}\label{sec:AP_Schur}
This section is devoted to the proof of  Theorem~\ref{thm:rank0reductionImproved}. We first recall very briefly basic definitions required to make sense to the statement. Given a discrete group $\Gamma$, its von Neumann algebra $\LL\Gamma$ is the space of all bounded left-convolution operators on $\Gamma$, that is the set of bounded operators $a$ on $\ell_2(\Gamma)$ such that there is $\xi \in \ell_2(\Gamma)$ such that $a \eta = \xi\ast \eta$ for every $\eta \in \ell_2(\Gamma)$. It carries a tracial state $\tau_\Gamma(a) = \langle a\delta_e,\delta_e\rangle$. By the general construction (\S~\ref{subsection:NCLp}), for $1\leq p<\infty$, the map $a \mapsto \|a\|_{L_p(\tau_\Gamma)}:= (\tau_\Gamma (a^*a)^p\rangle^{\frac 1 p}$ is a norm on $\LL \Gamma$, whose completion is $L_p(\LL \Gamma)$, the non-commutative $L_p$ space of $\Gamma$. By the discussion in \cite[\S 3.1]{LafforguedlS}, the following is one of several equivalent possible definitions: $L_p(\LL \Gamma)$ has the operator space approximation property OAP if there is a net $T_\alpha$ of finite rank linear maps on $L_p(\LL \Gamma)$ such that 
\[ \lim_\alpha \sup_i \|T_\alpha\otimes \mathrm{id}(x_i) - x_i\|_{L_p(\tau_\Gamma\otimes \Tr)}=0\]
for every sequence $x_i \in L_p(\LL\Gamma) \otimes M_{n_i}$ such that $\lim_i \|x_i\|_{L_p(\tau_\Gamma \otimes \Tr)} = 0$. 

\begin{proof}[Proof of Theorem~\ref{thm:rank0reductionImproved}] We give the proof, insisting on the new aspects and going fast on the arguments that appear identically in the litterature. The reader can look at \cite{LafforguedlS,MR3781331,MR4680356} for more details.

  We recall a notion from \cite{LafforguedlS}: a locally compact group has the property of completely bounded approximation by Schur multipliers on $S_p$ if there is a constant $C$ and a net $\psi_\alpha \in A(G)$ such that $\psi_\alpha \to 1$ uniformly on compact subsets of $G$ and the Schur multiplier with symbol $\psi_\alpha(gh^{-1})$ has completely bounded norm less than $C$ on $S_p(L_2(G))$. The infimum of such constants is $\CHp(G)$, with the convention $\inf \emptyset= \infty$. Here $A(G) \subset C_0(G)$ is the Fourier algebra of $G$: \[A(G) = \{ \xi \ast \eta \mid \xi,\eta \in L_2(G).\]
  We prove that, under the assumption in Theorem~\ref{thm:rank0reductionImproved}, $\CHp(\SL_{2d-1}(\R))=\infty$. By the results of \cite{LafforguedlS,MR3035056,MR3781331} (see for example the proof of \cite[Theorem 4.7]{MR3781331}) it indeed implies that $L_p(\LL\Gamma)$ lacks the OAP for every $\Gamma$ a lattice in $\mathrm{SL}_{2d-1}(\R)$, or in a connected simple Lie group whose Lie algebra contains $\mathfrak{sl}_{2d-1}$. This includes all simple Lie groups of rank $2d-1$ if $2d-1=3$ or $2d-1\geq 9$. 

 Let us write $G=\mathrm{SL}_{2d-1}(\R)$ and $K=\mathrm{SO}(2d-1)$. For $v>0$, let us write $D(v) \in G$ the diagonal matrix with first $d-1$ entries equal to $v$, one equal to $0$ and last $d-1$ equal to $-v$. Let $\psi:G\to \C$ be a continuous $K$-biinvariant function vanishing at infinity such that the Schur multiplier with symbol $(g,h)\mapsto \psi(g^{-1}h)$ is bounded with norm $1$ on $S_p(L_2(G))$. We prove that
 \begin{equation}\label{eq:bound_on_psi} |\psi(D(v))| \leq C \sum_{n \geq 1} \varepsilon( \exp(-(1+c)^n v/C)).
 \end{equation}
 for some positive constants $c,C$ that depend on $d$ only. Before we do so, let us explain why \eqref{eq:bound_on_psi} proves the theorem. First, since $\varepsilon$ is non-decreasing, we have 
 \[ \varepsilon( \exp(-(1+c)^n v/C)) \leq \frac{1+c}{c} \int_{(1+c)^{n-1}}^{(1+c)^{n}} \frac{\varepsilon( \exp(-s v/C))}{s} ds.\]
 Summing over $n$, we obtain
 \[ |\psi(D(v))| \leq \frac{C+cC}{c} \int_{1}^\infty \frac{\varepsilon( \exp(-s v/C))}{s} ds = \frac{C+cC}{c} \int_0^{\exp(-v/C)} \frac{\varepsilon(t)}{t |\log t|} dt.\]
 The last equality is the change of variable $t=\exp(-s v/C)$. The right-hand side is finite and goes to $0$ as $v\to \infty$. In particular, for every real number $A>1$, there is $g \in G$ such that $|\psi(g)|\leq \frac 1 A$ for every $K$-biinvariant continuous function vanishing at infinity and with Schur multiplier norm $1$. Therefore, for every $\varphi \in A(G)$, we have $\int_{K\times K} \varphi(kgk')dk dk' \leq \frac{1}{A} \|\varphi\|_{MS_p(L_2(G))}$. This implies $\Lambda_{p,\mathrm{cb}}^{\mathrm{Schur}}(G) \geq A$, and therefore  $\Lambda_{p,\mathrm{cb}}^{\mathrm{Schur}}(G) = \infty$.

 It remains to prove \eqref{eq:bound_on_psi}. This is essentially what the argument in \cite{LafforguedlS} (for $d=2$) and \cite{MR3781331} (for $d>2$) proves. We first recall the argument when $d=2$. Let $r\geq s\geq t$ with $r+s+t=0$. Consider the function 
 \[ (k,k')\in K\times K \mapsto \psi( \diag(-t,t/2,t/2) k^{-1}k' \diag(-t,t/2,t/2)).\]
 Since $\diag(-t,t/2,t/2)$ commutes with $\SO(2)$, it descends to a function $\SO(3)/\SO(2) \times \SO(3)/\SO(2)$, so under the identification $\SO(3)/\SO(2) \simeq \sphere^{2}$, we can regard it as a function $\sphere^2\times \sphere^2\to\C$ of the form $(\xi,\eta)\mapsto \varphi(\langle \xi,\eta\rangle)$. By Lemma~\ref{lem:restriction}, it is a Schur multiplier with norm $\leq 1$ on $S_p(L_2(\sphere^2))$. Following these identifications, a small computation gives that for $\varphi(0) = \psi(\diag(-t/2,-t/2,t))$, and that for  $\delta  =\frac{\sinh(r+t/2)}{\sinh(-3t/2)}$, $\varphi(\delta) = \psi(\diag(r,s,t))$. So by the hypothesis we obtain
 \[ |\psi(\diag(r,s,t)) - \psi(\diag(-t/2,-t/2,t))| \leq \varepsilon(\delta) \leq \varepsilon(e^{t-s}).\]
 The second inequality is because $\varepsilon$ is non-decreasing and $\delta \leq e^{r+2t} = e^{t-s}$. Therefore if $r'\geq s' \geq t$ is another triple with $r'+s'+t=0$,
\[ |\psi(\diag(r,s,t)) - \psi(\diag(r',s',t))| \leq \varepsilon(e^{t-s}) + \varepsilon(e^{t'-s'}).\]
 By symmetry we also get
 \[ |\psi(\diag(r,s,t)) - \psi(\diag(r,s',t'))| \leq  \varepsilon(e^{s-r})+ \varepsilon(e^{s'-r'}),\]
 and combining the two, writing $g=\diag(3/2v,-v/2,-v)$ we obtain
 \begin{align*} |\psi(D(v)) - \psi(D(3v/2))| & \leq |\psi(D(v)) - \psi(g)| + |\psi(g)-\psi(D(3v/2))|\\
   & \leq 4 \varepsilon(\exp(-v/2)).
 \end{align*}
 %% As a consequence,
 %% \begin{align*}
 %%   |\psi(D(v))| &= \lim_{N \to \infty}  |\psi(D(v) -  \psi(D((3/2)^N v))|\\
 %%   & \leq \sum_{n=1}^\infty |\psi(D((3/2)^{n-1}v) -  \psi(D((3/2)^n v))|\\
 %%   & \leq 4 \sum_{\neq \geq 1} \varepsilon( (3/2)^{n-1}v/2).
 %% \end{align*}
 This proves \eqref{eq:bound_on_psi} with $c=\frac 1 2$ and $C=4$.

 When $d\geq 2$, following the proof of \cite[Lemma 4.3--4.4]{MR3781331} we obtain similarly
 \begin{multline*} |\psi(\diag(v_1,\dots,v_{d-1},u,t,\dots,t)) - \psi(\diag(v_1,\dots,v'_i,\dots,v_{d-1},u'-\delta,t,\dots,t))| \\\leq \varepsilon( e^{t-u}) + \varepsilon( e^{t-u'})
 \end{multline*}
 whenever both $(v_1,\dots,v_{d-1},u,t,\dots,t)$ and $(v_1,\dots,v'_i,\dots,v_{d-1},u'-\delta,t,\dots,t)$ belong to the Weyl chamber $\{a_1 \geq \dots \geq a_{2d-1}, \sum a_i=0\}$. By $d-1$ consecutive applications with $v_i=v$ and $v'_i = (1+1/d)v$, we obtain 
 \[ \big|\psi(D(v)) - \psi(\diag( \frac{d+1}{d}v,\dots,\frac{d+1}{d}v,-\frac{d-1}{d}v,-v,\dots,-v)\big|  \leq 2(d-1) \varepsilon(e^{-v/d})\]
 and by symmetry
\begin{multline*} \big|\psi(\diag( \frac{d+1}{d}v,\dots,\frac{d+1}{d}v,-\frac{d-1}{d}v,-v,\dots,-v) - \psi(D(\frac{d+1}{d}v)) \big|\\  \leq 2(d-1) \varepsilon(e^{-\frac{d+1}{d}v}).
\end{multline*}
Putting both estimates together, we obtain
   \[ |\psi(D(v)) - \psi(D(\frac{d+1}{d}v))| \leq 4(d-1) \varepsilon(e^{-v/d}),\]
   which proves \eqref{eq:bound_on_psi} with $c=\frac 1 d$ and $C=4(d-1)$.
 \end{proof}
We made the choice to follow the original argument from \cite{LafforguedlS}, which uses some rather delicate reasonings (\cite[\S 3]{LafforguedlS}). Alternatively, we can also use the more direct argument that originates from \cite{Vergara,dlS_habil,MR4680356} and argue that we can derive from the proof of \eqref{eq:bound_on_psi} an explicit separation \`a la Hahn-Banach between the identity operator and finite rank operators on $L_p(\LL \SL_{2d-1}(\Z))$. Indeed, if $s:\SL_{2d-1}(\R)/\SL_{2d-1}(\Z) \to \SL_{2d-1}(\R)$ is a measurable section and $m_v$ denotes the image of the uniform measure on $\SO(2d-1) \times \SO(2d-1) \times \SL_{2d-1}(\R)/\SL_{2d-1}(\Z)$ by the map
\[(k,k',\omega) \mapsto (kD(v) k' s(\omega))^{-1} s(k D(v) k'\omega) \in \Gamma,\] then the sequence of maps $T\in CB(L_p(\LL \SL_{2d-1}(\Z))) \mapsto \sum_\gamma m_v(\gamma) \tau (T(\lambda_\gamma)\lambda_\gamma^*)$ is Cauchy in the dual of the set of all completely bounded operators on $L_p(\LL \SL_{2d-1}(\Z))$. Its limit separates the identity (where it takes the value $1$) from the finite rank operators (where it vanishes).
\section{Qualitative regularity of radial Fourier multipliers}\label{sec:qualitative}
This section is devoted to the proof of Theorem~\ref{thm:mainqualitative}. The proof follows Fefferman's argument \cite{zbMATH03370942}. 
\subsection{Reduction to square function estimates}
In this section, we make the following assumption:  $m \colon (0,\infty) \to \R$ is a bounded measurable function, $d \geq 2$ and $p \neq 2$ are such that $T_{m(|\cdot|)}$ is bounded $L_p(\R^d)\to L_p(\R^d)$. We denote the Fourier multiplier $T_{m(|\cdot|)}$ by $T$, and its $L_p(\R^d)\to L_p(\R^d)$-norm by $\|T\|$. 

The unit ball of $L_\infty(\R)$, equipped with weak-* topology $\sigma(L_\infty(\R),L_1(\R))$, is metrizable and compact. Therefore, any bounded sequence in $L_\infty(\R)$ has a weak*-converging subsequence. This says that there are plenty of sequences $s_n,r_n$ satisfying the hypothesis of the next lemma.

\begin{lem}\label{lem:WOT-convergence} Let $s_n,r_n$ be a sequence of positive reals such that

  \begin{itemize}
  \item $\lim_n r_n = +\infty$ and
    \item $m(s_n\exp(\frac{\cdot}{r_n}))$ converges weak* to $m_\infty \in L_\infty(\R)$.
  \end{itemize}

  Then $m_\infty$ defines a Fourier multiplier on $L_p(\R)$ of norm $\leq \|T\|$.

  Moreover, for $u \in \sphere^{d-1}$, let $U_n$ be the linear isometry of $L_p(\R^d)$ given by $U_n f(x) = \exp(i\langle s_n u,x\rangle) (s_n/r_n)^{\frac{d}{p}}f(x s_n/r_n)$. Then $U_n^{-1} T U_n$ converges in the weak operator topology to the Fourier multiplier with symbol $m_\infty(\langle \cdot,u\rangle)$.
\end{lem}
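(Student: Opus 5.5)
The plan is to prove the second statement first and then deduce the first from it.

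\emph{Step 1: compute the conjugated operator.} The maps $U_n$ are isometries of $L_p(\R^d)$, so $\|U_n^{-1}TU_n\|\leq \|T\|$. Conjugating by the dilation $f\mapsto (s_n/r_n)^{d/p}f(\cdot\, s_n/r_n)$ rescales the symbol to $m(|\xi| s_n/r_n)$. Conjugating by the modulation $e^{i\langle s_n u,x\rangle}$ then shifts frequencies, so the symbol becomes
\[ \xi\mapsto m\big(|s_n u + \tfrac{s_n}{r_n}\xi|\big) = m\Big(s_n\big|u+\tfrac{\xi}{r_n}\big|\Big), \]
after adjusting the $2\pi$ normalisation, which only rescales $\xi$. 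For fixed $\xi$ we have
\[ \big|u+\xi/r_n\big| = \exp\Big(\tfrac{\langle \xi,u\rangle}{r_n} + O\big(|\xi|^2/r_n^2\big)\Big). \]
So the symbol equals $m_n(\langle\xi,u\rangle + \epsilon_n(\xi))$, where $m_n := m(s_n\exp(\cdot/r_n))$ and $\epsilon_n(\xi)=O(|\xi|^2/r_n)$ uniformly on compact sets.

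\emph{Step 2: WOT convergence.} The operators are uniformly bounded, so it suffices to test against $f,g$ Schwartz with compactly supported Fourier transforms. Then $\langle U_n^{-1}TU_n f,g\rangle = \int \sigma_n(\xi)\hat f(\xi)\overline{\hat g(\xi)}\,d\xi$, where $\sigma_n$ is the symbol from Step 1. We need this to converge to $\int m_\infty(\langle \xi,u\rangle)\hat f\overline{\hat g}$.

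Decompose $\xi=(t,\xi')$ with $t=\langle\xi,u\rangle$. For fixed $\xi'$, change variables $t\mapsto t+\epsilon_n(t,\xi')$; this is a diffeomorphism close to the identity in $C^1$ on the compact support, since $\partial_t\epsilon_n=O(1/r_n)$. The integral then becomes $\int m_n(\tau) H_n(\tau,\xi')\,d\tau\,d\xi'$, where $H_n\to \hat f\overline{\hat g}$ in $L_1$. Weak-* convergence of $m_n$, together with the uniform bound $\|m_n\|_\infty\leq\|m\|_\infty$ and $L_1$-convergence of $H_n$, gives convergence of the integral. Here I use that weak-* convergence in $L_\infty(\R)$ extends to $L_\infty(\R^d)$ for functions of one variable tested against $L_1(\R^d)$, by Fubini, since $\int H(\tau,\xi')\,d\xi'\in L_1(\R)$. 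This controlling of the perturbation $\epsilon_n$ is the main technical point, but it is routine once the supports are compact.

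\emph{Step 3: deduce the first statement.} WOT limits of operators bounded by $\|T\|$ have norm at most $\|T\|$. Hence the multiplier with symbol $m_\infty(\langle\cdot,u\rangle)$ has norm at most $\|T\|$ on $L_p(\R^d)$. By the classical restriction (de Leeuw) argument, a symbol depending only on $\langle\xi,u\rangle$ is bounded on $L_p(\R^d)$ if and only if $m_\infty$ is bounded on $L_p(\R)$, with the same norm. This can be seen directly by testing on tensor products $f(\langle x,u\rangle)g(x')$ with $\|g\|_p=1$, on which the multiplier acts only on the first factor. This gives the first claim. One should also check that the dilation direction is consistent, $s_n/r_n$ against $r_n/s_n$; if it comes out inverted, replace $U_n$ by $U_n^{-1}$ in the computation, which does not affect the argument.
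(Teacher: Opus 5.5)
Your proposal is correct and follows essentially the same route as the paper. Both compute the conjugated symbol $m(s_n|u+\xi/r_n|)$, change variables along $u$ via $e^{t'/r_n}=|u+\xi/r_n|$ on the compact Fourier support, pass to the limit using the weak* convergence, and obtain the norm bound together with WOT convergence from uniform boundedness on a dense subspace. One point of ordering: the bound $\|S\|\leq \|T\|$ should be extracted from the dense-subspace convergence before you invoke density to get WOT convergence everywhere, which is exactly how the paper phrases it.
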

\begin{proof} Let $f,g$ be two functions on $\R^d$ whose Fourier transforms are $C^\infty$ with compact support. In particular, $f \in L_p(\R^d)$ and $g \in L_q(\R^d)$, and the set of all such $f$ ($g$) form a dense subspace of $L_p(\R^d)$ (respectively $L_q(\R^d)$). We will prove that
\begin{equation}\label{eq:WOT_cv_on_dense_subspace}\lim_n \int U_n^{-1} T U_n f(x) \overline{g(x)} dx = \int m_\infty(\langle u,\xi\rangle) \widehat{f}(\xi) \widehat{\overline{g}}(\xi) d\xi.
\end{equation}
Since the sequence of operators $U_n^{-1} T U_n$ have all operator norm $\|T\|$, this will at the same time imply that $S$, the Fourier multiplier with symbol $m_\infty( \langle \cdot,u\rangle)$ is bounded on $L_p(\R^d)$ with norm $\leq \|T\|$ (which is equivalent to saying that the Fourier multiplier with symbol $m_\infty$ is bounded on $L_p(\R)$ with norm $\leq \|T\|$), and that $S$ is the limit of $U_n^{-1} T U_n$ in the weak-operator topology.

First observe that in the Schwartz space,
  \[ \widehat{U_n f}(\xi) = (s_n/r_n)^{\frac d q} \widehat{f}(r_n/s_n \xi - r_n u).\]
  This implies that, if $T_\varphi$ is a bounded Fourier multiplier with symbol $\varphi$, then  $U_n^{-1} T_\varphi U_n$ is the Fourier multiplier with symbol $\varphi(s_n (u + \frac{\cdot}{r_n}))$. In particular, $U_n^{-1} T U_n$ is the Fourier multiplier with symbol $m(|s_n(u + \frac{\cdot}{r_n})|)$. Therefore,
  \[ \int U_n^{-1} T U_n f(x) \overline{g(x)} dx = \int m(s_n|u+\frac{\xi}{r_n}|) h(\xi) d\xi,\]
  where $h(\xi) = \widehat{f}(\xi) \widehat{\overline{g}}(\xi)$ is a compactly supported $C^\infty$ function. In the decomposition $\R^d = \R u \oplus u^\perp$, this becomes
  \[ \int_{u^\perp} \int_\R m(s_n\sqrt{1+2\frac{t}{r_n} +  \frac{t^2+|\eta|^2}{r_n^2}}) h(tu + \eta) dt d\eta.\]
  In fact, we can restrict these intervals to $u^\perp \cap B(R)$ and $[-R,R]$ for some fixed $R$, because $h$ has compact support. For every fixed $\eta$, we can make the change of variable $t\to t'$ where $\exp(\frac{t'}{r_n})= \sqrt{1+2\frac{t}{r_n} +  \frac{t^2+|\eta|^2}{r_n^2}}$. This change of variable is not injective (there are two solutions in $t$ to the equation $e^{2t'/r_n}=1+2\frac{t}{r_n} +  \frac{t^2+|\eta|^2}{r_n^2}$), but it becomes injective in restriction to $[-R,R]$ as soon as $r_n >R$ (because the sum of the two solutions is $-2 r_n$). And moreover we have $t'=t+o(1)$ and $\frac{dt'}{dt} = 1+o(1)$. In particular, using that $h$ is continuous we have $h(tu+\eta)  dt = (1+o(1)) h(t'u + \eta) dt'$ uniformly and we obtain
  \[ \int U_n^{-1} T U_n f(x) \overline{g(x)} dx = \int_{u^\perp} \int_\R m(s_n\exp(\frac{t'}{r_n})) h(t'u +\eta) dt' d\eta + o(1).\]
  Taking the limit $n\to \infty$ and using the assumption of the weak*-convergence of $m(s_n \exp(\frac{\cdot}{r_n}))$ to $m_\infty$, we get that the limit in the left-hand side of \eqref{eq:WOT_cv_on_dense_subspace} exists and is equal to
  \[ \int_{u^\perp} \int_\R m_\infty(t) h(tu+\eta) dt d\eta.\]
  This is the right-hand side of \eqref{eq:WOT_cv_on_dense_subspace}.
\end{proof}

Denote by $M_\infty$ the set of all $m_\infty$ that appear as in Lemma~\ref{lem:WOT-convergence}, when $(s_n)_n$ and $(r_n)_n$ are sequences satisfying the hypothesis of Lemma~\ref{lem:WOT-convergence}.
\begin{example} If $m$ is the indicator function of the ball, then
  \[M_\infty = \{ \chi_{(-\infty,a)} \mid -\infty \leq a\leq \infty\}.\]

  More generally, if $m$ is a function that is continuous on $(0,\infty)\setminus\{1\}$, that has left- and right-limits $m_-$ and $m_+$ at $1$, then
  \[M_\infty = \{m(s) \mid s \in (0,\infty)\setminus\{1\}\} \cup \{ m_-\chi_{(-\infty,a)} + m_+ \chi_{(a,\infty)} \mid -\infty \leq a\leq \infty\}.\]
\end{example}
The following is a generalization of Meyer's lemma \cite{zbMATH03370942}.
\begin{lem}\label{lem:Meyer} For every $m_\infty \in M_\infty$, and every finite family $u_1,\dots,u_N \in \sphere^{d-1}$ and $f_1,\dots,f_N \in L_p(\R^d)$,
  \[ \Big\| \big(\sum_j |T_{m_\infty(\langle \cdot, u_j\rangle)} f_j|^2\big)^{\frac 1 2}\Big\|_p \leq \|T\| \Big\| \big(\sum_j |f_j|^2\big)^{\frac 1 2}\Big\|_p.\]
\end{lem}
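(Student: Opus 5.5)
The plan is to use the weak-operator convergence from Lemma~\ref{lem:WOT-convergence}, combined with the classical fact that a bounded operator on $L_p$ is automatically bounded on $L_p(\ell_2)$ with the same norm once it commutes suitably with modulations. The key point is that the operators $U_{n}^{-1}TU_n$ depend on the direction $u$ only through the modulation $e^{i\langle s_n u,x\rangle}$. So we want a single operator, built from $T$, that acts on all the $f_j$ at once.

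\emph{Step 1: reduction to a dense class.} First restrict to $f_j$ whose Fourier transforms are $C^\infty$ with compact support. Both sides are continuous in $(f_j)$ for the $L_p(\ell_2^N)$ norm, since each $T_{m_\infty(\langle\cdot,u_j\rangle)}$ is bounded by Lemma~\ref{lem:WOT-convergence}. It also suffices to prove the dual form
\[ \Big|\sum_j \int T_{m_\infty(\langle\cdot,u_j\rangle)}f_j\,\overline{g_j}\Big| \leq \|T\| \,\big\|(\textstyle\sum|f_j|^2)^{1/2}\big\|_p\,\big\|(\textstyle\sum|g_j|^2)^{1/2}\big\|_{q}\]
for nice $g_j$, because $L_q(\ell_2^N)$ is the dual of $L_p(\ell_2^N)$.

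\emph{Step 2: approximate by a single operator.} Fix the sequences $(s_n,r_n)$ giving $m_\infty$. Let $D_n f(x)=(s_n/r_n)^{d/p}f(xs_n/r_n)$ and $M_{n,j}f(x)=e^{i\langle s_nu_j,x\rangle}f(x)$, so that $U_{n,j}=M_{n,j}D_n$. By Lemma~\ref{lem:WOT-convergence}, the left side above equals
\[ \lim_n \sum_j \int T(M_{n,j}D_nf_j)\,\overline{M_{n,j}D_n' g_j},\]
where $D_n'$ is the dilation normalized in $L_q$, adjoint to $D_n^{-1}$. Put $F_j=M_{n,j}D_nf_j$ and $G_j=M_{n,j}D'_ng_j$. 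Modulations have modulus one and $D_n,D_n'$ are isometric dilations applied uniformly in $j$, so
\[ \big\|(\textstyle\sum|F_j|^2)^{1/2}\big\|_p=\big\|(\textstyle\sum|f_j|^2)^{1/2}\big\|_p,\]
and the same holds for the $G_j$ in $L_q$.

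\emph{Step 3: the vector-valued extension.} Now $\big|\sum_j\int TF_j\overline{G_j}\big|\le \|T\otimes \mathrm{id}_{\ell_2^N}\|\,\|(F_j)\|_{L_p(\ell_2)}\|(G_j)\|_{L_q(\ell_2)}$. By the Marcinkiewicz--Zygmund theorem, any bounded linear operator on $L_p$ extends to $L_p(\ell_2)$ with the same norm, since $T$ is real-linear with real symbol and is positive-definite-free. This is the standard isometric fact obtained via Gaussian averages: $\|(\sum|Tf_j|^2)^{1/2}\|_p^p = c_p^{-p}\mathbb{E}\|T(\sum\gamma_jf_j)\|_p^p$. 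Taking $n\to\infty$ gives the claim.

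The main obstacle is Step 2. We must check that Lemma~\ref{lem:WOT-convergence} is applied with $u$ varying but the same $(s_n,r_n)$; the lemma allows this since $u$ is arbitrary for a fixed sequence. We must also check that the pairing identity holds with the conjugated modulations. If the Gaussian isometry constant needs care, we use complex Gaussians, for which $\mathbb{E}|\sum\gamma_ja_j|^p=c_p^p(\sum|a_j|^2)^{p/2}$ exactly. This makes the constant exactly $\|T\|$.
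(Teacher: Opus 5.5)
Your argument is correct and follows the paper's proof. You apply the Marcinkiewicz--Zygmund $\ell_2$-extension of $T$ (constant exactly $\|T\|$, via complex Gaussians) to $U_{n,j}f_j$, and use that the $U_{n,j}$ differ only by unimodular phases, so the square-function norms are unchanged. You then pass to the limit with Lemma~\ref{lem:WOT-convergence}: you do this through an explicit duality pairing with $L_q(\ell_2^N)$, while the paper uses weak lower semicontinuity of the norm, which amounts to the same thing. Your side remarks about commuting with modulations and ``positive-definite-free'' are spurious, since the Gaussian averaging works for any bounded complex-linear operator, as your last paragraph says.
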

\begin{proof} It is a classical fact \cite[Theorem 5.5.1]{MR2463316} that the boundedness of $T$ implies automatically a boundedness of its $\ell^2$-extension: for any finite sequence $g_1,\dots,g_N \in L_p(\R^d)$,
  \begin{equation}\label{eq:l2extension}\Big\| \big(\sum_j |T g_j|^2\big)^{\frac 1 2}\Big\|_p \leq \|T\| \Big\| \big(\sum_j |g_j|^2\big)^{\frac 1 2}\Big\|_p.
  \end{equation}
Let $(r_n),(s_n)$ be as in Lemma~\ref{lem:WOT-convergence}. Let $U_{n,j}$ be the isometry as in Lemma~\ref{lem:WOT-convergence} for this $s_n,r_n$ but for the vector $u_j$. We will apply~\eqref{eq:l2extension} to $g_j=U_{n,j}f_j$. What is important is that for different values of $j$, the operators $U_{n,j}$ differ by a phase multiplication only. Therefore,
  \[ \Big(\sum_j |g_j|^2\Big)^{\frac 1 2}(x) = \Big(\frac{s_n}{r_n}\Big)^{\frac d p}\big(\sum_j |f_j|^2\Big)^{\frac 1 2}\Big(\frac{s_n}{r_n} x\Big),\]
  and the right-hand side of \eqref{eq:l2extension} is equal to $\big\| \big(\sum_j |f_j|^2\big)^{\frac 1 2}\big\|_p$.
  For the same reason, the left-hand side of \eqref{eq:l2extension} is equal to
  \[\Big\| \big(\sum_j |U_{n,j}^{-1} T U_{n,j} f_j|^2\big)^{\frac 1 2}\Big\|_p.\]
  Lemma~\ref{lem:WOT-convergence} tells us that $U_{n,j}^{-1} T U_{n,j} f_j$ converges in the weak-operator topology to $T_{m_\infty(\langle \cdot,u_j\rangle)} f_j$ as $n\to \infty$. By the semi-continuity of the norm for the weak topology, the lemma follows.
\end{proof}

\subsection{Investigating directional square inequalities}
The following theorem characterizes the functions satisfying the conlusion of Lemma~\ref{lem:Meyer}.

\begin{thm}\label{thm:directional_square_functions_implies_constant}
  Let $m \in L_\infty(\R)$. Assume that there is $p>2$ and $d \geq 2$ such that for every finite family $u_1,\dots,u_N \in \sphere^{d-1}$ and $f_1,\dots,f_N \in L_p(\R^d)$,
\begin{equation}\label{eq:directional_square_functions} \Big\| \big(\sum_j |T_{m(\langle \cdot,u_j\rangle)} f_j|^2\big)^{\frac 1 2}\Big\|_p \leq \Big\| \big(\sum_j |f_j|^2\big)^{\frac 1 2}\Big\|_p.
\end{equation}
Then there is $c\in \C$ such that $m=c$ almost everywhere.
\end{thm}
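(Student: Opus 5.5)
We argue by contradiction, following Fefferman's ball multiplier argument. The point is that for a symbol of the form $m(\langle\cdot,u\rangle)$ the multiplier acts only along the direction $u$, so it can be tested exactly on tubes pointing in direction $u$.

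\emph{Step 1: invariances of the hypothesis.} The class of $m\in L_\infty(\R)$ satisfying \eqref{eq:directional_square_functions} for given $p,d$ is stable under three operations.
\begin{itemize}
\item \emph{Dilations} $m\mapsto m(\lambda\,\cdot)$ with $\lambda>0$: conjugate every $f_j$ by the $L_p$-normalized dilation of $\R^d$, which preserves both sides.
\item \emph{Reflection} $m\mapsto m(-\,\cdot)$: replace $u_j$ by $-u_j$.
\item \emph{Translations} $m\mapsto m(\cdot+a)$: replace $f_j$ by $e^{2i\pi a\langle x,u_j\rangle}f_j$. The phase depends on $j$, but it disappears in $|f_j|$ and in $|T_{m(\langle\cdot,u_j\rangle)}f_j|$.
\end{itemize}
Only dilations and reflection are needed below.

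\emph{Step 2: a one-dimensional test function.} Let $K=\check m$ be the convolution kernel of $T_m$ on $\R$; it is a tempered distribution. If $K$ were supported in $\{0\}$, then $m$ would be a polynomial, hence constant since it is bounded. So, assuming $m$ is not a.e. constant, $K$ is nonzero near some $x_0\neq0$. Using reflection we may take $x_0>0$, and using a dilation we may take $x_0\in(\tfrac32,\tfrac52)$. Then we can find a smooth bump $g$ supported in a tiny interval around $\tfrac12$, so $\operatorname{supp}g\subset[0,1]$, such that $K\ast g$ is not identically zero on $(2,3)$. As $K\ast g$ is smooth, there are $c>0$ and a set $E\subset[2,3]$ with $|E|\geq c$ on which $|T_m g|\geq c$. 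I expect this step to be the main obstacle. It is where non-constancy of a merely $L_\infty$ symbol must be converted into a quantitative pointwise statement, and the dilation invariance of Step~1 is used exactly to move the support of $K$ to the right scale.

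\emph{Step 3: Besicovitch tubes.} Fix $\delta$ and a family of $\delta$-tubes $R_j$ with direction $u_j=\vecteur(R_j)$, base point $a_j$, pairwise disjoint translates $\overline R_j$, and $|\bigcup R_j|\leq 2\fonction_d(\delta)\sum_j|R_j|$. Write $x=a_j+t u_j+y$ with $y\perp u_j$, and set
\[ f_j(x)=g(t)\,\chi_{B(0,\delta)}(y). \]
Since the symbol depends only on $\langle\xi,u_j\rangle$, the multiplier acts in the variable $t$ only, so exactly
\[ T_{m(\langle\cdot,u_j\rangle)}f_j(x)=(T_mg)(t)\,\chi_{B(0,\delta)}(y). \]
Thus $|T_{m(\langle\cdot,u_j\rangle)}f_j|\geq c$ on a subset of $\overline R_j$ of measure $\gtrsim|R_j|$.

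For the lower bound on the left-hand side of \eqref{eq:directional_square_functions}, use the disjointness of the $\overline R_j$ and keep only the $j$-th term on $\overline R_j$:
\[ \Big\|\big(\textstyle\sum_j|T_{m(\langle\cdot,u_j\rangle)} f_j|^2\big)^{1/2}\Big\|_p^p\ \geq\ \sum_j\int_{\overline R_j}|T_{m(\langle\cdot,u_j\rangle)}f_j|^p\ \gtrsim\ \sum_j|R_j|. \]
For the upper bound on the right-hand side, use that $f_j$ is supported in $R_j$, and Hölder with $p>2$ on the set $\bigcup R_j$:
\[ \Big\|\big(\textstyle\sum_j|f_j|^2\big)^{1/2}\Big\|_p\ \leq\ \big|\textstyle\bigcup_j R_j\big|^{\frac1p-\frac12}\,\Big(\|g\|_\infty^2\textstyle\sum_j|R_j|\Big)^{1/2}. \]
Plugging both bounds into \eqref{eq:directional_square_functions} and comparing powers of $\sum_j|R_j|$ gives
\[ \fonction_d(\delta)^{\frac12-\frac1p}\gtrsim 1, \]
uniformly in $\delta$. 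This contradicts $\fonction_d(\delta)\to0$, which follows from Keich's bound \eqref{eq:Keich} together with $\fonction_d\lesssim\fonction_2$. Hence $m$ is a.e. equal to a constant $c\in\C$.
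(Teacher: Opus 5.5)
\textbf{There is a genuine gap in Step~3: the Hölder inequality you use points the wrong way for $p>2$.}

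\emph{The faulty step.} For $p>2$ and a set $E$ of finite measure, Hölder gives $\|F\|_{L_2(E)}\le |E|^{\frac12-\frac1p}\|F\|_p$. The bound $\|F\|_p\le |E|^{\frac1p-\frac12}\|F\|_2$ that you use for the right-hand side of \eqref{eq:directional_square_functions} is false: take $F=\chi_A$ with $A\subset E$ and $|A|<|E|$.

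\emph{Why this configuration cannot work.} The correct inequality goes against you. Your $f_j$ sit on the heavily overlapping tubes, with $E=\bigcup_jR_j$ and $|E|\le 2\fonction_d(\delta)\sum_j|R_j|$. Then
$\|(\sum_j|f_j|^2)^{1/2}\|_p\ge |E|^{\frac1p-\frac12}\|(\sum_j|f_j|^2)^{1/2}\|_2\gtrsim \fonction_d(\delta)^{-(\frac12-\frac1p)}(\sum_j|R_j|)^{1/p}$.
So the hypothesis holds with room to spare, and no contradiction is possible.

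Even granting your displayed bound, comparing powers only gives $1\lesssim(|E|/\sum_j|R_j|)^{\frac1p-\frac12}$. That is a negative power of a small number, so nothing follows. Your last line $\fonction_d(\delta)^{\frac12-\frac1p}\gtrsim1$ comes from a sign slip.

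Putting the input on the overlapping tubes and measuring the output on the disjoint translates is the configuration suited to exponents below $2$. It is exactly what the paper does in Section~\ref{sec:quantitative}, after first reducing to $p<2$.

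\emph{Repair, option (a): dualize first.} The diagonal operator $(f_j)_j\mapsto(T_{m(\langle\cdot,u_j\rangle)}f_j)_j$ on $L_p(\R^d;\ell_2^N)$ has adjoint $(g_j)_j\mapsto(T_{\overline m(\langle\cdot,u_j\rangle)}g_j)_j$ on $L_{p'}(\R^d;\ell_2^N)$, with the same norm. So $\overline m$ satisfies the hypothesis with exponent $p'<2$. Your Step~3 then goes through verbatim, since Hölder now points the right way. The exponent you obtain is $\frac1{p'}-\frac12=\frac12-\frac1p$, matching your final line. This shows $\overline m$, hence $m$, is constant.

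\emph{Repair, option (b): swap the roles of $R_j$ and $\overline{R_j}$.} This is what the paper, following Fefferman, does in Lemma~\ref{lem:supportTm}.
\begin{itemize}
\item Put $f_j(x)=g(t-2)\chi_{B(0,\delta)}(y)$ on the disjoint translates. Then the right-hand side equals $\|g\|_p(\sum_j|R_j|)^{1/p}$ exactly.
\item Use reflection in Step~2 to get $|T_mg|\ge c$ on a subset of $[-2,-1]$ of measure $\ge c$. This only requires a point of $\operatorname{supp}\check m$ near $-2$.
\item Bound $\int_E\sum_j|T_{m(\langle\cdot,u_j\rangle)}f_j|^2\le|E|^{1-\frac2p}\|(\sum_j|T_{m(\langle\cdot,u_j\rangle)}f_j|^2)^{1/2}\|_p^2$.
\end{itemize}

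\emph{Comparison with the paper once repaired.} Your Step~2 is correct, and it is not the obstacle you feared. It is a genuinely different and more direct endgame than the paper's. The paper shows that $T_mf$ vanishes on $I\pm2|I|$ for every $f$ supported in $I$. It then upgrades this to support preservation by chopping $f$, and concludes with the commutant argument of Lemma~\ref{lem:peetre}. You instead use two facts: a bounded $m$ with $\check m$ supported in $\{0\}$ is constant; and dilation/reflection invariance lets you move a point of $\operatorname{supp}\check m\setminus\{0\}$ to the required distance.
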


The proof relies on two lemmas. The first is easy, il follows also from \cite{zbMATH03158261}.
\begin{lem}\label{lem:peetre} If $m \in L_\infty(\R)$ satisfies that for every closed interval $I \subset \R$ and every $f \in L_2(\R)$ supported in $I$, $T_m f$ is supported in $I$, then $m$ is almost everywhere constant.
\end{lem}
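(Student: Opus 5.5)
Here is the plan. The hypothesis says that $T_m$ commutes with multiplication by indicator functions of closed intervals. Indeed, write $P_I$ for multiplication by $\chi_I$. For $f\in L_2$ supported in $I$ we get $P_I T_m f = T_m f$. Applying the same to the complement is more delicate, since the complement is not a closed interval. We handle it by duality: $T_m^* = T_{\overline m}$. The adjoint does not obviously satisfy the hypothesis, so instead we argue directly as follows.

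\emph{Step 1: locality.} Take $f$ supported in $I=[a,b]$. For every closed interval $J\supset I$, $T_mf$ is supported in $J$; intersecting over all $J$ gives nothing new. We need to exploit different intervals. Take $f=f_1+f_2$ with $f_1$ supported in $[a,c]$ and $f_2$ in $[c,b]$. Then $T_mf_1$ is supported in $[a,c]$ and $T_mf_2$ in $[c,b]$. So for $f\in L_2$ with compact support and $c\in\R$, we have $\chi_{(-\infty,c]}T_mf = T_m(\chi_{(-\infty,c]}f)$ almost everywhere. By density of compactly supported functions and boundedness of $T_m$ on $L_2$, this holds for all $f\in L_2$. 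Hence $T_m$ commutes with multiplication by $\chi_{(-\infty,c]}$ for all $c$. Taking differences and linear combinations, it commutes with all step functions, and by strong limits with multiplication by every $g\in L_\infty(\R)$.

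\emph{Step 2: commutant.} $T_m$ also commutes with translations, being a Fourier multiplier. Its Fourier conjugate $\widehat{T_m}$ is multiplication by $m$. Commuting with multiplication by $e^{2i\pi \lambda x}$ means commuting with the translations $\widehat f\mapsto \widehat f(\cdot-\lambda)$ on the Fourier side. So $m(\xi)\widehat f(\xi-\lambda) = m(\xi-\lambda)\widehat f(\xi-\lambda)$ for all $f$, i.e.\ $m(\cdot)=m(\cdot-\lambda)$ almost everywhere, for every $\lambda\in\R$. A bounded measurable function invariant under all translations is almost everywhere constant. To see this, convolve with an approximate identity: the convolution is a translation-invariant continuous function, hence constant, and the limit shows $m$ equals that constant almost everywhere.

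The only slightly delicate point is Step 1, getting commutation with half-line indicators from the support condition on bounded intervals. The splitting $f=f_1+f_2$ at $c$ does exactly this, since the two pieces have outputs supported in $[a,c]$ and $[c,b]$ respectively, which meet only in a null set. Alternatively, Step 2 can be replaced by the remark that an operator commuting with $L_\infty$ on $L_2(\R)$ is itself a multiplication operator by some $g\in L_\infty$. A translation-invariant multiplication operator has constant $g$, and then $m=g$ is constant.
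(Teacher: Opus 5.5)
Your proof is correct and follows the paper's route: $T_m$ commutes with multiplication by interval indicators, hence with all of $L_\infty(\R)$, hence with the characters $e^{2i\pi\lambda x}$, which makes $m$ translation invariant and therefore a.e.\ constant. Your splitting $f=f_1+f_2$ at $c$ spells out the passage from the support hypothesis (invariance of each $L_2(I)$) to genuine commutation, which the paper simply asserts; the opening remarks about duality and the adjoint are a false start and can be dropped.
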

\begin{proof} The assumption is that $T_m$ commutes, in $B(L_2(\R))$, with the operator of multiplication by indicator function of intervals. Therefore, it commutes with all multiplication operators (the von Neumann algebra generated by the indicator functions of intervals is the whole $L_\infty(\R)$). In particular, $T_m$ commutes with the multiplication by $e^{ia\cdot }$ for every $a \in \R$, that is $m(\cdot -a) = m$ almost everywhere for every $a$. This implies that $m$ is almost everywhere constant.  
\end{proof}

In the second lemma, $m$ is as in Theorem~\ref{thm:directional_square_functions_implies_constant}. \begin{lem}\label{lem:supportTm} Let $f$ be a $C^\infty$ function supported in an interval $I=[s,t]$. Let $I_\pm$ the intervals $I\pm 2(t-s)$. Then
  \[ \int_{I_-} |T_mf|^2 = 0 \textrm{ and }\int_{I_-} |T_mf|^2=0.\] 
\end{lem}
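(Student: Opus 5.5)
We follow Fefferman's Besicovitch argument. The hypothesis \eqref{eq:directional_square_functions} is used with the $\delta$-tubes from the Besicovich construction of \S\ref{subsec:besicovich}, which satisfy $\fonction_d(\delta)\to 0$ (Keich's bound \eqref{eq:Keich} together with $\fonction_d\lesssim \fonction_2$). The statement has a typo: the second integral should be over $I_+$. The two cases are symmetric under $x\mapsto -x$, so we treat $I_+$. By scaling and translation we may take $I=[0,1]$. Then $I_+=[2,3]$, which matches the convention $\overline{R}=R+2\vecteur(R)$.

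Fix $\delta>0$ and a family of $\delta$-tubes $R_j$ with directions $u_j$, pairwise disjoint $\overline{R}_j$, and $|\bigcup R_j|\le \varepsilon\sum|R_j|$. Set $f_j(x)=f(\langle x-a_j,u_j\rangle)\,\chi_j(x)$, where $a_j$ is the base point of $R_j$ and $\chi_j$ is the indicator of the cross-section $B(0,\delta)$ in the orthogonal directions. The multiplier $T_{m(\langle\cdot,u_j\rangle)}$ acts only in the $u_j$ variable. Hence
\[T_{m(\langle\cdot,u_j\rangle)}f_j(x)=(T_mf)(\langle x-a_j,u_j\rangle)\,\chi_j(x),\]
and this function restricted to $\overline{R}_j$ has $|\overline{R}_j|^{-1}\int_{\overline R_j}|\cdot|^2\simeq \int_{I_+}|T_mf|^2=:\alpha$. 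Rather than sharp indicators we can use smooth cutoffs in the transverse direction, which changes nothing.

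Now compare the two sides of \eqref{eq:directional_square_functions}, taking $p>2$. The functions $f_j$ are supported in $R_j$ and bounded by $\|f\|_\infty$. Hölder's inequality on $\bigcup R_j$ gives
\[\Big\|\big(\sum|f_j|^2\big)^{1/2}\Big\|_p^p\le \Big|\bigcup R_j\Big|^{1-p/2}\Big(\int\sum|f_j|^2\Big)^{p/2}\lesssim \Big|\bigcup R_j\Big|^{1-p/2}\Big(\sum|R_j|\Big)^{p/2}.\]
Since $1-p/2<0$, we should instead bound this from above using $\sum|f_j|^2\le \|f\|_\infty^2 N(x)$, where $N(x)$ counts the tubes containing $x$. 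The quantity we want is actually a lower bound on the left-hand side. Since the $\overline R_j$ are disjoint,
\[\Big\|\big(\sum_j|T_jf_j|^2\big)^{1/2}\Big\|_p^p\ge\sum_j\int_{\overline R_j}|T_jf_j|^p\ge \sum_j|\overline R_j|^{1-p/2}\Big(\int_{\overline R_j}|T_jf_j|^2\Big)^{p/2}\simeq \alpha^{p/2}\sum|R_j|.\]

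For the right-hand side, we use duality, which is Fefferman's trick. The plain pointwise bound gives $\int N^{p/2}$, and that is not small. Instead we estimate the $L_2$ side via Hölder against $\chi_{\bigcup R_j}$. Concretely, we run the argument on the dual side $p'<2$, applying \eqref{eq:directional_square_functions} in its dualized form, since the adjoint multipliers $\bar m(\langle\cdot,u_j\rangle)$ satisfy the $L_{p'}$ inequality. With inputs supported on the $R_j$, Hölder with $p'<2$ gives
\[\Big\|\big(\sum|f_j|^2\big)^{1/2}\Big\|_{p'}\le |\textstyle\bigcup R_j|^{1/p'-1/2}\big(\sum|R_j|\big)^{1/2}.\]
On the output side, disjointness of the $\overline R_j$ gives the lower bound $\alpha^{1/2}(\sum|\overline R_j|)^{1/p'}$ by the same Hölder argument restricted to the union of the $\overline R_j$. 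Swapping roles as needed, we obtain $\alpha^{1/2}\lesssim \varepsilon^{|1/p-1/2|}$. Letting $\delta\to0$ gives $\varepsilon\to 0$, hence $\alpha=0$.

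The main obstacle is getting this direction of Hölder right. We need the small measure of $\bigcup R_j$ on the input side and the disjointness of the $\overline R_j$ on the output side, and both must point the same way. If $p>2$ this fails directly, so the step of passing to the dual exponent $p'$ must be justified carefully. That passage holds because the $\ell^2$-valued inequality dualizes, $L_p(\ell_2)^*=L_{p'}(\ell_2)$, and because the adjoints of the directional multipliers are again of the same form, with symbol $\bar m$, whose $T_{\bar m}$ governs supports in the same way up to reflection.
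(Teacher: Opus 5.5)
Your approach works and is the transpose of the paper's argument, but one step is wrong as written and needs a short fix (second paragraph below). The clash of H\"older directions you hit at $p>2$ comes only from putting the inputs on the $R_j$. The paper stays at $p>2$ and swaps the roles. It takes $f_j(x)=f(\langle x,u_j\rangle-a_j-2)h_j(P_jx)$, supported on the pairwise disjoint $\overline{R_j}$, so that $\|(\sum_j|f_j|^2)^{1/2}\|_p=(\sum_j|R_j|)^{1/p}\|f\|_p$ exactly. It then reads the output on the small set $E=\bigcup_jR_j$, where H\"older goes the right way: $(\int_E\sum_j|T_{m_j}f_j|^2)^{1/2}\le|E|^{\frac12-\frac1p}\|(\sum_j|T_{m_j}f_j|^2)^{1/2}\|_p$. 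Since $R_j=\overline{R_j}-2u_j$, this gives $\int_{I_-}|T_mf|^2\le\varepsilon^{1-\frac2p}\|f\|_p^2$ directly. The $I_+$ half follows by using the multipliers $m(\langle\cdot,-u_j\rangle)$ instead; the freedom to choose directions is what makes ``both point the same way''.

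Your version is also valid: dualize via $L_p(\ell_2)^*=L_{p'}(\ell_2)$, put the inputs on $\bigcup_jR_j$ where H\"older at $p'<2$ gives the gain, and read the output on the disjoint $\overline{R_j}$. With the bilinear pairing, the transpose of $T_{m(\langle\cdot,u_j\rangle)}$ is $T_{m(\langle\cdot,-u_j\rangle)}$. Since the directions are arbitrary, the same $m$ then satisfies the square function inequality at $p'$, and you get the statement for $T_m$ itself rather than for $T_{\bar m}$. The paper's placement simply avoids the duality step.

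The step that must be corrected is the output lower bound at $p'<2$. It does not follow ``by the same H\"older argument'': $\int_A|g|^{p'}\ge|A|^{1-\frac{p'}{2}}(\int_A|g|^2)^{\frac{p'}{2}}$ is the reverse of H\"older's inequality and fails in general, so your claimed bound in terms of $\alpha^{1/2}=(\int_{I_+}|T_mf|^2)^{1/2}$ is unjustified. No H\"older is needed, though. On $\overline{R_j}$ the output is the fixed profile $(T_mf)(\langle x-a_j,u_j\rangle)$ times the transverse indicator, so disjointness gives exactly
\[
\Big\|\big(\sum_j|T_jf_j|^2\big)^{1/2}\Big\|_{p'}\ge\Big(\sum_j|R_j|\Big)^{1/p'}\|T_mf\|_{L_{p'}(I_+)}.
\]
Combined with your input bound $\varepsilon^{\frac{1}{p'}-\frac12}(\sum_j|R_j|)^{1/p'}\|f\|_2$, this yields $\|T_mf\|_{L_{p'}(I_+)}\le\varepsilon^{\frac12-\frac1p}\|f\|_2$. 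That controls the $L_{p'}$ norm rather than the $L_2$ norm on $I_+$, which is still enough to conclude that $T_mf=0$ almost everywhere on $I_+$ once $\varepsilon\to0$.
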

The two lemmas imply Theorem~\ref{thm:directional_square_functions_implies_constant}: indeed, if $f$ is a $C^\infty$ function that is supported in an interval $I=[s,t]$, then it is also supported in the interval $I'=[s',t]$ for any $s'<s$, so the first statement in Lemma~\ref{lem:supportTm} implies that $\int_{I'_+} |T_mf|^2=0$ for every $s'<s$, that is $\int_{2t-s}^\infty |T_m f|^2 = 0$. Also, writing $f$ as a finite sum of $C^\infty$ functions supported in subintervals of $[s,t]$ of size $\leq \varepsilon$, we obtain $\int_{t+\varepsilon}^\infty |T_mf|^2=0$, so $\int_t^\infty |T_mf|^2=0$. Similarly, looking at the interval $[s,t']$ for $t'>t$, the second statement in Lemma~\ref{lem:supportTm} implies that $\int_{-\infty}^{s} |T_mf|^2=0$. So $T_m f$ is supported in $[s,t]$. But this is true for every $s<t$ and every $f$. Lemma~\ref{lem:peetre} implies that $m$ is a constant.

\begin{proof}[Proof of Lemma~\ref{lem:supportTm}] Using the dilation symmetries of $L_p(\R^d)$, we can assume that $t-s=1$. We have to prove $\int_{s+2}^{s+3}|T_m|^2=0$ and $\int_{s-2}^{s-1} |T_m f|^2=0$. We focus on proving the second. The other inequality can be proven similarly and is left to the reader.\footnote{The only difference is that we define $m_j(x) = m(\langle x,-u_j\rangle)$ instead of $m(\langle x,u_j\rangle)$.}

  The proof relies on the Besicovich construction (\S~\ref{subsec:besicovich}). Let $\varepsilon>0$. By the Besicovich's construction, there are $\delta>0$ and $\delta$-tubes $R_1,\dots,R_N$ satisfying \eqref{eq:gooddeltatubes}. That is, $E=\bigcup_{i=1}^N R_i$ has measure $\leq \varepsilon \sum_i |R_i|$. Write $\vecteur_j = \vecteur(R_j)$, and choose a linear isometry $x \in \R^d \mapsto (\langle x,\vecteur_j\rangle,P_j x) \in \R \times \R^{d-1}$. There are $a_j \in \R ,b_j \in \R^{d-1}$ such that
\[ R_j = \Big \{x \in \R^d \mid \langle x,\vecteur_j\rangle \in a_j + I \textrm{ and } |P_j x-b_j| \leq \delta\Big\}\]
\[ \overline{R_j} = \Big \{x \in \R^d \mid  \langle x,\vecteur_j\rangle \in a_j+2+I \textrm{ and } |P_j x-b_j| \leq \delta\Big\}.\]
Define $f_j:\R^d\to \R$ by
\[ f_j(x) = f\big(\langle x,\vecteur_j\rangle - a_j-2\big) h_j(P_j x),\]
  where $h_j$ is the indicator function of $B(b_j,\delta)$.  Then $f_j$ is supported in $\overline{R_j}$, which are pairwise disjoint so
  \[ \Big\| \big(\sum_j |f_j|^2\big)^{\frac 1 2}\Big\|_p = (\sum_j \|f_j\|_p^p)^{\frac 1 p} = (\sum_j |R_j|)^{\frac 1 p}\|f\|_p.\]
  On the other hand, if $m_j(x)= m(\langle x,u_j\rangle)$, since in the decomposition $\R^d = \R u_j \oplus \R^{d-1}$, $f_j$ decomposes as $f_j(\cdot - a_j-2)\otimes h_j$ and $m_j=m\otimes 1$, we have $T_{m_j} f_j(x) = (T_mf)( \langle x,u_j\rangle-a_j-2) h_j(P_j x)$. We can therefore bound
  \begin{align*} \int_E \sum_j |T_{m_j}f_j|^2 & \geq \sum_j \int_{R_j} |T_m(\langle x,u_j\rangle - a_j-2)|^2 |h_j(P_jx)|^2 dx\\
    & = \sum_j |R_j|\int_{s-2}^{s-1} |T_mf|^2.
  \end{align*}
By H\"older's inequality
    \begin{align*} \Big(\int_E \sum_j |T_{m_j}f_j|^2\Big)^{\frac 1 2} & \leq |E|^{\frac 1 2 - \frac 1 p} \Big\| \big(\sum_j |T_{m_j} f_j|^2\big)^{\frac 1 2}\Big\|_p\\
    & \leq \varepsilon^{\frac 1 2 -\frac 1 p}  (\sum_j |R_j|)^{\frac 1 2} \|f\|_p.
    \end{align*}
Putting everything together we get
\[  \Big(\int_{I_-} |T_mf|^2\Big)^{\frac 1 2} \leq  \varepsilon^{\frac 1 2 - \frac 1 p} \|f\|_p.\]
Taking the infimum over all choices of $R_1,\dots,R_N$, we can replace $\varepsilon$ by $\fonction_d(\delta)$ in this inequality, and since $\lim_{\delta \to 0} \fonction_d(\delta)=0$, we get $\Big( \int_{I_-} |T_mf|^2\Big)^{\frac 1 2}=0$ as required.
\end{proof}

\subsection{The conclusion}

%% \begin{cor} Let $p\neq 2$ and $d\geq 2$. Let $T=T_{m(|\cdot|)}$ be an $L_p(\R^d)$ bounded radial multiplier. Then the function $m\circ \exp$ belongs to $b_{\infty,\infty}^0(\R)$.
%% \end{cor}
\begin{proof}[Proof of Theorem~\ref{thm:mainqualitative}]
  Write $f=m\circ \exp$. Let $M_\infty$ denote all $\sigma(L_\infty(\R),L_1(\R))$-limits of sequence $m(s_n e^{\frac{t}{r_n}}) = f(\log(s_n) + \frac{\cdot}{r_n})$ for $s_n \in (0,\infty)$ and $r_n \to 0$. All $m_\infty \in M_\infty$ satisfy the conclusion of Lemma~\ref{lem:Meyer}, so are almost everywhere constant by Theorem~\ref{thm:directional_square_functions_implies_constant}. Proposition~\ref{prop:characterization_of_binftyinfty0} implies that $f \in b_{\infty,\infty}^0$, or equivalently that $F$ is smooth.
\end{proof}

\section{Quantitative regularity of radial Fourier multipliers}\label{sec:quantitative}
This section is devoted to the proof of Theorem~\ref{thm:Lpbddness_LP_quantitative} and its corollaries. The proof sometimes uses and follows arguments in the previous section, and sometimes deviates from it.

Theorem~\ref{thm:Lpbddness_LP_quantitative} will easily follow from the following lemma.
\begin{lem}\label{lem:Lpbddness_quantitative} There is a nonzero $h \in C^\infty_c(\R)$ and a constant $C_d$ such that, for every $1 \leq p \leq \infty$, every $\varphi$ as in Theorem~\ref{thm:Lpbddness_LP_quantitative}, every $r \in \R^*$ and $\delta \in (0,1)$,
  \[ \big| \int_\R \varphi(x/r) \widehat h(x) dx\big| \leq C_d \|T\|_{L_p\to L_p}\big( \fonction_d(\delta)^{\big|\frac 1 p - \frac 1 2\big|} + \frac{1}{|r| \delta^2}\big).\]
\end{lem}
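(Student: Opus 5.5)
Lemma~\ref{lem:supportTm} proves that limit symbols have the support-preservation property, and Lemma~\ref{lem:peetre} then forces them to be constant. My plan is to run that same chain of arguments at a fixed scale $r$, without passing to weak-* limits, and to keep track of the errors. Fix once and for all $f\in C^\infty_c(\R)$ supported in $[0,1]$ and $g\in C^\infty_c(\R)$ supported in $[-2,-1]$. Set $h=\widehat{f}\,\widehat{\overline g}$ up to reflection, so that
\[ \int \varphi(x/r)\widehat h(x)\,dx = \langle T_{\varphi(\cdot/r)} f, g\rangle .\]
Up to normalisation of the Fourier transform, $h$ is an autocorrelation of $f$ and $g$. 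It is nonzero and smooth, but it is not compactly supported. To get $h\in C^\infty_c$, I will instead take $f,g$ to be band-limited with Schwartz tails, so that $\widehat f,\widehat g$ are compactly supported. I will choose them concentrated near $[0,1]$ and $[-2,-1]$ respectively, with tails decaying fast enough that the tail contributions below are $O(1/(|r|\delta^2))$, or simply absorbed into $C_d$.

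\textbf{Step 1 (Besicovitch and square function at scale $r$).} Take $\delta$-tubes $R_j$ with disjoint translates $\overline{R_j}$ and $|\bigcup R_j|\le \varepsilon\sum|R_j|$ with $\varepsilon$ close to $\fonction_d(\delta)$. Form $f_j(x)=f(\langle x,\vecteur_j\rangle-a_j-2)\,h_\delta(P_jx-b_j)$, where $h_\delta$ is now a smooth bump at scale $\delta$ rather than an indicator. Use the isometries $U_{n,j}$ of Lemma~\ref{lem:WOT-convergence} with a single finite $r$ and with $s=1$; more precisely, choose $s$ so that $\log s$ gives the translate $x$ in $\varphi(x/r)$. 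These isometries differ across $j$ only by phases. Then apply the $\ell^2$-extension \eqref{eq:l2extension} of $T$, exactly as in the proof of Lemma~\ref{lem:Meyer}.

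This yields a square-function inequality for the operators $S_j=U_j^{-1}TU_j$, whose symbols are $m(|s(u_j+\xi/r)|)$. It also gives an H\"older step as in Lemma~\ref{lem:supportTm}: pairing $\sum_j S_jf_j$ against the functions $g_j$ supported in $R_j$ gives
\[ \Big|\sum_j \langle S_jf_j,g_j\rangle\Big| \le \|T\|\,\varepsilon^{|\frac1p-\frac12|}\sum_j|R_j| .\]
For $p>2$ this is H\"older on $E$ exactly as in the lemma. For $p<2$ I will argue by duality, using that $T^*$ is bounded on $L_{p'}$.

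\textbf{Step 2 (replace $S_j$ by the directional multiplier).} I need to compare $\langle S_jf_j,g_j\rangle$ with $|R_j|$ times $\int\varphi(x/r)\widehat h$. The symbol $m(s|u_j+\xi/r|)$ equals $\varphi(\log s+\log|u_j+\xi/r|)$. On the frequency support of $f_j\otimes g_j$, the transverse frequencies are of size $1/\delta$ and the longitudinal ones are $O(1)$. Expanding the logarithm, the error in the phase variable is $O(|\eta|^2/r)$ with $|\eta|\lesssim 1/\delta$. I therefore expect the symbol to differ from $\varphi(\log s+t/r)$ only on an effective set where the argument shifts by $O(1/\delta^2)$, measured in the variable $t$.

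This is the main obstacle: $\varphi$ is merely bounded, so a shift in its argument cannot be controlled pointwise. My plan is to perform the change of variables $t\mapsto t'$ from Lemma~\ref{lem:WOT-convergence} quantitatively. The map satisfies $t'=t+O((1+|\eta|^2)/r)$ and $dt'/dt=1+O(1/r)$. Since $\varphi$ is integrated against a smooth function, the shift can be moved onto the smooth factor $h$, at a cost of $\|\varphi\|_\infty\cdot O(1/(|r|\delta^2))$ times $\|T\|$, using $\|\varphi\|_\infty\le\|T\|$. Averaging over $\eta$ against $|\widehat{h_\delta}|^2$, which has mass $|R_j|/\delta^{d-1}$ normalised appropriately, gives the $\frac{1}{|r|\delta^2}$ term.

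\textbf{Step 3 (assemble).} Summing over $j$ and dividing by $\sum_j|R_j|$ gives
\[ \Big|\int\varphi(x/r)\widehat h\Big|\le C_d\|T\|\big(\varepsilon^{|\frac1p-\frac12|}+(|r|\delta^2)^{-1}\big).\]
Letting $\varepsilon\downarrow\fonction_d(\delta)$ proves the lemma. Finally, I will check that the two Schwartz tails of $f,g$ outside $[0,1]$ and $[-2,-1]$ contribute only harmlessly, by choosing $f,g$ with tails of size $\fonction_d(\delta)$-independent and small, or by absorbing them into $C_d$.
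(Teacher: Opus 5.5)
\textbf{Verdict: genuine gap, but a self-inflicted one.} Your proposal breaks at the choice of $h$: the ``fix'' you adopt (band-limited $f,g$) destroys the support properties Step~1 needs.

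\textbf{The error.} The identity $\int\varphi(x/r)\widehat h(x)\,dx=\langle T_{\varphi(\cdot/r)}f,g\rangle=\int\varphi(\xi/r)\widehat f(\xi)\overline{\widehat g(\xi)}\,d\xi$ requires $\widehat h=\widehat f\,\overline{\widehat g}$. That is, $h=f\ast g^*$ with $g^*(x)=\overline{g(-x)}$, not $h=\widehat f\,\widehat{\overline g}$.

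\textbf{Why the band-limiting fix fails.}
\begin{itemize}
\item If $\widehat f,\widehat g$ are compactly supported, then $\widehat h$ is compactly supported, so $h$ cannot be compactly supported at all. The fix defeats its own purpose.
\item Step~1 rests on exact supports: $f_j$ in the pairwise disjoint $\overline{R_j}$, and $\sum_j|g_j|^2$ in $E=\bigcup_j R_j$. Band-limited $f,g$ lose both.
\item The tails cannot be ``absorbed into $C_d$''. The lemma asks for one $h$, so $f,g$ are fixed independently of $r,\delta$. Nothing gives the tail terms a Kakeya gain, so you only get bounds of the form $\|T\|\,\eta\sum_j|R_j|$, with $\eta>0$ fixed by the tails of $f,g$.
\item After dividing by $\sum_j|R_j|$, that is a constant which does not decay as $\delta\to0$, $|r|\to\infty$. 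The left side is already trivially $\le\|T\|\,\|\widehat h\|_1$, so an additive constant error makes the estimate empty. Making it $O(1/(|r|\delta^2))$ would force $f,g$, hence $h$, to depend on $r,\delta$.
\end{itemize}

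\textbf{The repair.} Keep your original $f,g\in C^\infty_c$, supported in $[0,1]$ and $[-2,-1]$, and set $h=f\ast g^*$, as the paper does. Then $h\in C^\infty_c(\R)$ with support in $[1,3]$. It is nonzero because $\widehat h$ is a product of two nonzero real-analytic functions. There are no tails.

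\textbf{With the repair, your argument is the paper's proof.}
\begin{itemize}
\item Step~1 is the paper's argument: a finite-scale Meyer lemma from the $\ell^2$-extension of $T$ and isometries differing only by phases, then the Kakeya--H\"older gain.
\item Your placement (data on the disjoint translates, test functions on the compressed tubes) is the mirror image of the paper's. It suits $p>2$.
\item Your reduction of $p<2$ by duality is fine: $m$ is real, so $T$ has the same norm on $L_{p'}$.
\item In Step~2, the shift in the longitudinal variable is $O((t^2+|\eta|^2)/r)$, not $O((1+|\eta|^2)/r)$. Also, $\widehat h$ and the transverse factor are only Schwartz, so you need the truncations and weighted norms of Lemma~\ref{lem:change_of_variable}. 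These cost only an extra $O(\|x^2\widehat h'\|_1/|r|)\lesssim 1/(|r|\delta^2)$.
\end{itemize}
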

\begin{proof}[Proof of Theorem~\ref{thm:Lpbddness_LP_quantitative} assuming Lemma~\ref{lem:Lpbddness_quantitative}] The first part of the theorem is  an application of Proposition~\ref{prop:from_one_h_to_all} with
  \[ G_r(x) = C_d \|T\|_{L_p\to L_p}\big( \fonction_d(\delta)^{\big|\frac 1 p - \frac 1 2\big|} + \frac{1}{|r| \delta^2}\big).\]
The assumptions of the proposition are immediate to check: $G_r$ is a constant function so $MG_r = G_r$ is obvious, and $G_{\lambda r} \leq \max(\lambda,1/\lambda) G_r$ is clear.

The second part is by Keich's construction of Besicovich sets \eqref{eq:Keich} and Lemma~\ref{lem:infi_for_particular_fd} below. The last statement follows by standard Littlewood-Paley theory \eqref{eq:from_LP_decay_to_modulus}.\end{proof}
\begin{lem}\label{lem:infi_for_particular_fd} For every $\alpha,\beta>0$, and $A \geq 1$,
  \[ \inf_{0<\delta} \delta^{\alpha} + \frac{1}{A \delta^\beta} \leq 2 A^{-\frac{\alpha}{\alpha+\beta}}.\]

\[ \inf_{0<\delta<1} |\log \delta|^{-\alpha} + \frac{1}{A\delta^{\beta}} \leq C(\alpha,\beta) (\log (1+A))^{-\alpha}. \]
\end{lem}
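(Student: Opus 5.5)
For the first inequality I will simply exhibit a good choice of $\delta$. Set $\delta = A^{-\frac{1}{\alpha+\beta}}$. Then $\delta^\alpha = A^{-\frac{\alpha}{\alpha+\beta}}$ and
\[ \frac{1}{A\delta^\beta} = A^{-1+\frac{\beta}{\alpha+\beta}} = A^{-\frac{\alpha}{\alpha+\beta}}.\]
Adding the two terms gives the bound $2A^{-\frac{\alpha}{\alpha+\beta}}$. This choice also satisfies $\delta\le 1$ because $A\geq 1$, although the first inequality places no restriction on $\delta$.

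For the second inequality I will take $\delta = A^{-\frac{1}{2\beta}}$ once $A$ is large, say $A\geq A_0$ with $A_0>1$ depending only on $\beta$. This gives $\delta<1$. The second term is then
\[ \frac{1}{A\delta^\beta} = A^{-1/2}.\]
The first term is
\[ |\log\delta|^{-\alpha} = (2\beta)^\alpha (\log A)^{-\alpha}.\]
Since $A^{-1/2}$ decays faster than any power of $1/\log A$, we have $A^{-1/2}\leq C'(\alpha)(\log A)^{-\alpha}$ for $A\geq A_0$. We also have $\log A \geq \tfrac12\log(1+A)$ for $A\geq 3$, so I will take $A_0\geq 3$. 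Together these give the bound $C(\alpha,\beta)(\log(1+A))^{-\alpha}$ on this range.

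It remains to handle $1\leq A< A_0$. There I will take $\delta=1/2$, which gives the bound $(\log 2)^{-\alpha}+2^\beta$. This is a constant, and on this compact range of $A$ the quantity $(\log(1+A))^{-\alpha}$ is bounded below by $(\log(1+A_0))^{-\alpha}$. So after enlarging $C(\alpha,\beta)$ the inequality holds for small $A$ as well.

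Nothing here is a real obstacle; the only care needed is making the constant uniform, which the split at $A_0$ handles.
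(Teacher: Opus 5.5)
Your proof is correct and takes the paper's approach of plugging in an explicit $\delta$; the first inequality is handled identically. For the second, the paper takes $\delta=\min\bigl(A^{-1/\beta}\log(1+A)^{\alpha/\beta},\tfrac12\bigr)$ so that the two terms balance, whereas your choice $\delta=A^{-1/(2\beta)}$ for $A\geq 3$ (and $\delta=\tfrac12$ below) works just as well and makes the check $|\log\delta|\gtrsim\log(1+A)$ immediate.
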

\begin{proof}
  For the first inequality, take $\delta = A^{-\frac{1}{\alpha + \beta}}$, so both terms $\delta^{\alpha}$ and $\frac{1}{A \delta^\beta}$ are equal to $A^{-\frac{\alpha}{\alpha+\beta}}$.

  For the second, take $\delta = \min(A^{-\frac{1}{\beta}}\log(1+A)^{\frac{\alpha}{\beta}},1/2)$. Both terms are $O(\log (1+A)^{-\alpha})$.
\end{proof}

\subsection{Proof of Lemma~\ref{lem:Lpbddness_quantitative}}
We can  normalize $m$ so that $\|T\|_{L_p \to L_p}=1$. We can also assume that $p<2$, otherwise we replace $p$ by its conjugate exponent $q = \frac{p}{p-1}$. Finally, we can assume $|r| \geq 1$, because otherwise the lemma is obvious for any $h$ by the bound 
\[ \big| \int_\R \varphi(x/r) \widehat h(x) dx\big| \leq \|\varphi\|_\infty \|\widehat h\|_1.\]

Let $f,g \in C^\infty_c(\R)$ supported in $(-1,0)$ and $(1,2)$ respectively. We will prove the lemma with $h=f\ast g^*$, where $g^*(x) = \overline{g(-x)}$.

Let $\varepsilon>0$ and $\{R_j\}$ be a family of $\delta$-tubes such that the $\overline{R_j}$ are pairwise disjoint and that satisfy \eqref{eq:gooddeltatubes}. Write $\vecteur_j = \vecteur(R_j)$, and choose a linear isometry $x \in \R^d \mapsto (\langle x,\vecteur_j\rangle,P_j x) \in \R \times \R^{d-1}$. There are $a_j \in \R ,b_j \in \R^{d-1}$ such that
\[ R_j = \Big \{x \in \R^d \mid \langle x,\vecteur_j\rangle \in [a_j-1,a_j] \textrm{ and } |P_j x-b_j| \leq \delta\Big\}\]
  \[ \overline{R_j} = \Big \{x \in \R^d \mid  \langle x,\vecteur_j\rangle \in [a_j+1,a_j+2] \textrm{ and } |P_j x-b_j| \leq \delta\Big\}.\]

Let us fix $\rho \in C^\infty_c(\R^{d-1})$ supported inside $B(0,1)$ and with $\|\rho\|_2=1$. Let us define $f_j,g_j:\R^d\to \R$ by
  \begin{equation}\label{eq:def_of_fj} f_j(x) = \delta^{-\frac{d-1}{p}}f\big(\langle x,\vecteur_j\rangle - a_j\big) \rho\big( (P_j x - b_j)/\delta\big),
  \end{equation}
  \begin{equation}\label{eq:def_of_gj} g_j(x) = \delta^{-\frac{d-1}{p}} g\big(\langle x,\vecteur_j\rangle - a_j\big) \rho\big( (P_j x-b_j)/\delta\big),
  \end{equation}
  so that $f_j$ is supported in $R_j$, $g_j$ is supported in $\overline{R_j}$, $\|f_j\|_p =\|f\|_p \|\rho\|_p$ and $\|g_j\|_q = \|g\|_q \|\rho\|_q$. Finally, let us set $\varphi_j(x) = m(|u_j+\frac{1}{r} x|)$ and $T_j$ the Fourier multiplier with symbol $\varphi_j$. All these are defined so that $\int (T_j f_j) \overline{g_j}$ does not depend on $j$ and satisfies the following (recall $h=f \ast g^*$).
  \begin{lem}\label{lem:technical} There is a constant $C=C(d,\rho,f,g)$ such that or every $j$ and $|r|\geq 1$,
    \[ \Big|\int_{\R^d} (T_j f_j) \overline{g_j} - \int_\R \varphi(\cdot /r) \widehat{h}\Big| \leq C \frac{1}{|r|\delta^2}.\]
  \end{lem}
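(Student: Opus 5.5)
We compute $\int (T_j f_j)\overline{g_j}$ by Plancherel and reduce it to a one-dimensional integral. Since $T_j$ is translation invariant, we may take $a_j=0$ and $b_j=0$. Since $\varphi_j$ is radial around $-r u_j$, it is covariant under rotations fixing $u_j$, so we may take $u_j=e_1$ and $P_j$ the projection onto the last $d-1$ coordinates. Then $f_j = \delta^{-\frac{d-1}{p}} f\otimes \rho(\cdot/\delta)$ and $g_j=\delta^{-\frac{d-1}{p}} g\otimes\rho(\cdot/\delta)$. Writing $\xi=(t,\eta)\in\R\times\R^{d-1}$, we get
\[ \int (T_jf_j)\overline{g_j} = \delta^{-\frac{2(d-1)}{p}}\delta^{2(d-1)}\iint m\big(|e_1+\tfrac{(t,\eta)}{r}|\big)\, \widehat f(t)\overline{\widehat g(t)}\, |\widehat\rho(\delta\eta)|^2\, dt\, d\eta .\]
After the change of variables $\eta\to\eta/\delta$, the measure $|\widehat\rho(\eta)|^2 d\eta$ is a probability measure (since $\|\rho\|_2=1$). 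The prefactor is then $\delta^{(d-1)(1-\frac 2p)}$, which must equal $1$ in the normalization. I expect the paper intends $\|f_j\|_p\|g_j\|_q$ to be bounded independently of $\delta$. So I would check the exponent convention: with $\delta^{-\frac{d-1}{p}}$ on $f_j$ and $\delta^{-\frac{d-1}{q}}$ on $g_j$, the product gives $\delta^{-(d-1)}$, which cancels exactly. I will assume this normalization, adjusting the stated definitions if necessary. Also $\widehat f\,\overline{\widehat g}=\widehat{f\ast g^*}=\widehat h$, although it is $h$ itself whose Fourier transform appears in the target. Since $\widehat{\widehat h}(x)=h(-x)$, the roles are interchangeable up to sign conventions, and I would track this so that the final one-dimensional integral reads $\int m(\exp(t/r))\,\widehat h(t)\,dt$ after replacing the variables appropriately.

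The key step is to compare $m(|e_1+(t,\eta/\delta)/r|)$ with $\varphi(t/r)=m(\exp(t/r))$. I would mimic the change of variables in Lemma~\ref{lem:WOT-convergence}. For fixed $\eta'=\eta/\delta$, define $t'$ by
\[ e^{2t'/r}=1+\frac{2t}{r}+\frac{t^2+|\eta'|^2}{r^2}.\]
Then $t'-t=O\big((t^2+|\eta'|^2)/r\big)$ and $dt'/dt=1+O\big((|t|+|\eta'|^2)/r\big)$, uniformly for $t$ in a compact set (as $f,g$ have compact support, use Schwartz decay of the transforms) and $|\eta'|\le \sqrt{|r|}$. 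Substituting, the integral becomes
\[ \int\!\!\int m(e^{t'/r})\,\big[\widehat h(t')+O\big((t'^2+|\eta'|^2)/|r|\big)\big]\, dt'\,|\widehat\rho(\delta\eta')|^2\delta^{d-1}d\eta' .\]
The error is bounded by $\|m\|_\infty$ times an integral of $(1+|\eta'|^2)/|r|$ against the probability measure $\delta^{d-1}|\widehat\rho(\delta\eta')|^2d\eta'$, which has second moment $\simeq \delta^{-2}$. This is exactly the term $C/(|r|\delta^2)$. The main term is $\int\varphi(t'/r)\widehat h(t')dt'$.

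The main obstacle is making the change of variables rigorous globally, since $t$ ranges over all of $\R$ (only $\widehat h$ decays, not $h$). I would instead work with $\widehat f$ and $\widehat g$ as the compactly supported objects, i.e.\ choose $f,g$ as the Fourier-side functions so that $t$ lives in a compact set. This is harmless because only a nonzero $h\in C_c^\infty$ is required. The region $|\eta'|\gtrsim\sqrt{|r|}$, where the map is non-injective or degenerate, is handled crudely: bound it by $\|m\|_\infty$ times its measure, which is $\lesssim \delta^{-2}/|r|$ by Chebyshev. Likewise, when $|r|\lesssim\delta^{-2}$ the claimed bound is trivial.
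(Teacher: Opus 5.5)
Your reduction is the same as the paper's. You are right to normalize $g_j$ by $\delta^{-\frac{d-1}{q}}$: the paper's formula for $\widehat{g_j}$ and the identity $\|g_j\|_q=\|g\|_q\|\rho\|_q$ both presuppose it. With that normalization, Plancherel turns the quantity into $\iint m\big(\sqrt{(1+\frac xr)^2+\frac{|y|^2}{\delta^2r^2}}\big)\widehat h(x)|\widehat\rho(y)|^2\,dx\,dy$. The weight there is $\widehat f\,\overline{\widehat g}=\widehat h$, the same function as in the target, so nothing needs reconciling between $h$ and $\widehat h$.

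\textbf{The gap.} The problem is how you dispose of the obstacle you correctly identify. You cannot choose $\widehat f,\widehat g$ compactly supported, for three reasons.
\begin{itemize}
\item $f,g$ are fixed in the statement as $C^\infty_c$ functions supported in $(-1,0)$ and $(1,2)$.
\item $h=f\ast g^*$ must lie in $C^\infty_c(\R)\setminus\{0\}$ to feed Proposition~\ref{prop:from_one_h_to_all}.
\item The supports of $f_j$ and $g_j$ in $R_j$ and $\overline{R_j}$ are exactly what Lemma~\ref{lem:consequence_Kakeya} uses.
\end{itemize}
A nonzero function and its Fourier transform cannot both be compactly supported. So $\widehat h$ is necessarily a Schwartz function with non-compact support, and $h$ and $\widehat h$ are not interchangeable. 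As a result, your bracketed error $O\big((t'^2+|\eta'|^2)/|r|\big)$ carries no decay in $t'$, is not integrable over $t'\in\R$, and the argument as written does not give the bound.

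\textbf{The missing ingredient} is a quantitative use of the decay of $\widehat h$, as in Lemma~\ref{lem:change_of_variable}. The paper proceeds as follows.
\begin{itemize}
\item It truncates to $|x|\le\alpha r$, $|y|\le\delta r$, at a cost of $\frac{1}{\alpha r}\|x\widehat h\|_1+\frac{1}{\delta^2r^2}\|\widehat h\|_1$.
\item After the change of variables $x\mapsto u$, it bounds $|\widehat h(x(u,y))-\widehat h(u)|\le\int|\widehat h'(v)|1_{|v-u|\le C(u^2/r+|y|^2/(\delta^2 r))}\,dv$.
\item It integrates in $u$ before $v$, so the length of the shift becomes a factor. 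Only $\|\widehat h'\|_1$ and $\|v^2\widehat h'\|_1$ are then needed; the $u^2/r$ part is converted into $v^2/r$ by splitting according to which term of the shift dominates.
\end{itemize}

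Alternatively, your own truncation $|\eta'|\le\sqrt{|r|}$ works if you combine it with $|t|\le\alpha|r|$. Then $|t-t'|\le C\alpha|t|+C$, so $\widehat h$ and $\widehat h'$ are $O_N((1+|t'|)^{-N})$ on the segment between $t$ and $t'$. This supplies the decay your bracket is missing. The rest of your argument then goes through: Chebyshev for $|\eta'|>\sqrt{|r|}$ using the second moment $\simeq\delta^{-2}$, the tails $|t|>\alpha|r|$, and the trivial case $|r|\lesssim\delta^{-2}$.
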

  \begin{rem} The statement does not need any hypothesis on the support of $f,g$ and the constant is of the form
    \[ C(d,\rho,f,g) = C(d,\rho) \Big(\frac{1}{r\delta^2} \|\widehat{h}'\|_1 + \frac{1}{r}  \|x^2\widehat{h}'\|_1\Big).\]
    \end{rem} 
  Before we prove this lemma, let us conclude the proof of Lemma~\ref{lem:Lpbddness_quantitative}. We shall need the following fact, that is essentially Meyer's Lemma from \cite{zbMATH03370942}.
  \begin{lem}\label{lem:square_function_inequality} Let $r\neq 0$, $N \in \N$ and $u_1,\dots,u_N \in \sphere^{d-1}$. Let $T_j$ denote the Fourier multiplier with symbol $x\mapsto m(|u_j+\frac 1 r x|)$. Then for every $f_1,\dots,f_N \in L_p(\R^d)$,
  \[ \Big\| \big(\sum_j |T_{j} f_j|^2\big)^{\frac 1 2}\Big\|_p \leq \Big\| \big(\sum_j |f_j|^2\big)^{\frac 1 2}\Big\|_p.\]
\end{lem}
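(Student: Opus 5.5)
The plan is to deduce this from the vector-valued extension \eqref{eq:l2extension} of $T$, using the same conjugation trick as in the proof of Lemma~\ref{lem:Meyer}, but now with exact identities rather than a weak-operator limit. We normalize $\|T\|=1$ as in the surrounding proof.

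\textbf{Step 1: each $T_j$ is an exact conjugate of $T$.} For $u\in\sphere^{d-1}$ and $r\neq 0$, define the isometry $V_u$ of $L_p(\R^d)$ by
\[ V_u f(x) = e^{2i\pi\langle u,x\rangle}\, |r|^{-\frac d p} f(x/r). \]
A direct computation with the convention $\hat f(\xi)=\int f(x)e^{-2i\pi\langle x,\xi\rangle}dx$ gives
\[ \widehat{V_u f}(\xi) = |r|^{\frac d q}\, \hat f\big(r(\xi-u)\big). \]
Consequently $V_u^{-1} T V_u$ is the Fourier multiplier whose symbol is $m(|\cdot|)$ composed with $\xi\mapsto u+\xi/r$. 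That is exactly $T_j$ when $u=u_j$, so $T_j = V_{u_j}^{-1} T V_{u_j}$ as operators. No limit is needed, in contrast with Lemma~\ref{lem:WOT-convergence}, because here $r$ is fixed. Checking this identity, including the sign convention for $r<0$, is the only computation, and it is routine.

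\textbf{Step 2: apply the $\ell^2$-valued bound.} Set $g_j=V_{u_j}f_j$ and apply \eqref{eq:l2extension}, the Marcinkiewicz--Zygmund extension \cite[Theorem 5.5.1]{MR2463316}, which gives the bound with constant $\|T\|=1$. The operators $V_{u_j}$ differ only by the unimodular phase $e^{2i\pi\langle u_j,x\rangle}$. Hence
\[ \Big(\sum_j|g_j|^2\Big)^{\frac12}(x)=|r|^{-\frac dp}\Big(\sum_j|f_j|^2\Big)^{\frac12}(x/r), \]
and the right-hand side of \eqref{eq:l2extension} equals $\big\|(\sum_j|f_j|^2)^{1/2}\big\|_p$.

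For the left-hand side, write $Tg_j = V_{u_j}T_jf_j$. The same phase argument gives
\[ \Big(\sum_j|Tg_j|^2\Big)^{\frac12}(x)=|r|^{-\frac dp}\Big(\sum_j|T_jf_j|^2\Big)^{\frac12}(x/r), \]
so its $L_p$ norm is $\big\|(\sum_j|T_jf_j|^2)^{1/2}\big\|_p$. Combining the two computations yields the claimed inequality.

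\textbf{Main obstacle.} There is essentially none. The only point needing care is that the phase factors $e^{2i\pi\langle u_j,x\rangle}$ depend on $j$, but they disappear when taking moduli pointwise. This is precisely why the square function, rather than an arbitrary sum, is the right object. If one wants the statement for all $f_j\in L_p$, density of Schwartz functions handles any domain issues.
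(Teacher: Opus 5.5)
Your proposal is correct and follows the paper's own argument. The paper proves this lemma by pointing back to Lemma~\ref{lem:WOT-convergence} and Lemma~\ref{lem:Meyer}: conjugate $T$ by a modulation–dilation isometry, which for fixed $r$ gives the exact identity $T_j=V_{u_j}^{-1}TV_{u_j}$ with no weak-operator limit, and then apply the $\ell^2$-extension \eqref{eq:l2extension}, where the $j$-dependent phases vanish in the square function. Your choice of phase $e^{2i\pi\langle u,x\rangle}$ and normalization $|r|^{-d/p}$ is actually slightly more careful than the paper's $U_n$, both for the stated Fourier convention and for the case $r<0$.
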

\begin{proof}  This is what the arguments in the proof of Lemma~\ref{lem:WOT-convergence} and \ref{lem:Meyer} prove.
\end{proof}
Next we exploit the fact that the $\delta$-tubes $\{R_j\}$ satisfy \eqref{eq:gooddeltatubes} as follows.
\begin{lem}\label{lem:consequence_Kakeya} Let $c$ denote the volume of the unit ball in $\R^{d-1}$, then
  \[ \Big\|(\sum_{j=1}^N |f_j|^2)^{\frac 1 2}\Big\|_p \Big\| (\sum_{j=1}^N |g_j|^2)^{\frac 1 2}\|_q \leq (c \varepsilon)^{\frac 1 p - \frac 1 2} N \|f\|_2 \|g\|_q \|\rho\|_q.\]
\end{lem}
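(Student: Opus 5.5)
The plan is to estimate the two factors separately. The $g_j$-factor should be computed exactly from the disjointness of the $\overline{R_j}$. The $f_j$-factor should be bounded by Hölder's inequality on the small set $E=\bigcup_j R_j$, followed by an $L_2$ computation. Recall that we have reduced to $p<2$, so $q>2$ and $\frac 1 p - \frac 1 2>0$. I will use the normalizations recorded after \eqref{eq:def_of_fj}--\eqref{eq:def_of_gj}, namely $\|f_j\|_p=\|f\|_p\|\rho\|_p$ and $\|g_j\|_q=\|g\|_q\|\rho\|_q$, together with $\|\rho\|_2=1$.

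\emph{Step 1 (the $g_j$).} Each $g_j$ is supported in $\overline{R_j}$, and these tubes are pairwise disjoint. Hence at every point at most one term of $\sum_j|g_j|^2$ is nonzero, so $(\sum_j|g_j|^2)^{1/2}=\sum_j|g_j|$ pointwise. It follows that
\[\Big\|(\sum_j|g_j|^2)^{\frac12}\Big\|_q=\Big(\sum_j\|g_j\|_q^q\Big)^{\frac1q}=N^{\frac1q}\|g\|_q\|\rho\|_q.\]

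\emph{Step 2 (the $f_j$).} The function $F=(\sum_j|f_j|^2)^{1/2}$ is supported in $E$, and by \eqref{eq:gooddeltatubes} we have $|E|\le \varepsilon\sum_j|R_j|=\varepsilon N c\,\delta^{d-1}$. Since $p<2$, Hölder's inequality gives $\|F\|_p\le |E|^{\frac1p-\frac12}\|F\|_2$. The $L_2$ norm can be computed directly, with no orthogonality needed:
\[\|F\|_2^2=\sum_j\|f_j\|_2^2=N\,\delta^{-\frac{2(d-1)}{p}}\,\delta^{d-1}\|f\|_2^2\|\rho\|_2^2,\]
using the change of variables in \eqref{eq:def_of_fj} and $\|\rho\|_2=1$. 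Combining these,
\[\|F\|_p\le (c\varepsilon N\delta^{d-1})^{\frac1p-\frac12}N^{\frac12}\delta^{(d-1)(\frac12-\frac1p)}\|f\|_2=(c\varepsilon)^{\frac1p-\frac12}N^{\frac1p}\|f\|_2 .\]
The powers of $\delta$ cancel exactly, which is the reason for the normalization $\delta^{-\frac{d-1}{p}}$ in \eqref{eq:def_of_fj}.

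\emph{Step 3.} Multiplying the bounds from Steps 1 and 2 and using $N^{\frac1p}N^{\frac1q}=N$ gives the claimed inequality.

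There is no real obstacle here. The only point needing care is the bookkeeping of the $\delta$-powers coming from the normalizations of $f_j$ and $g_j$. For the $g_j$-factor I will rely on the normalization $\|g_j\|_q=\|g\|_q\|\rho\|_q$ stated in the text; if the prefactor in \eqref{eq:def_of_gj} is meant to be $\delta^{-\frac{d-1}{q}}$, this is exactly what makes that identity hold. I will check that it is consistent with how the pairing $\int (T_jf_j)\overline{g_j}$ is normalized in Lemma~\ref{lem:technical}.
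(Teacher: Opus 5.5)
Your proposal is correct and follows the paper's proof essentially verbatim. You compute the $g_j$-factor exactly from the disjointness of the $\overline{R_j}$, and you bound the $f_j$-factor by H\"older on $\bigcup_j R_j$ followed by the $L_2$ computation. Your remark that the prefactor in \eqref{eq:def_of_gj} must be $\delta^{-\frac{d-1}{q}}$ for $\|g_j\|_q=\|g\|_q\|\rho\|_q$ to hold is also right, and it matches the formula for $\widehat{g_j}$ used in the proof of Lemma~\ref{lem:technical}.
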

\begin{proof} Since $g_j$ is supported in $\overline{R_j}$ and the $\overline{R_j}$ are disjoint, we have
  \[ \Big\|(\sum_j |g_j|^2)^{\frac 1 2}\Big\|_q = (\sum_j \|g_j\|_q^q)^{\frac 1 q} = N^{\frac 1 q} \|g\|_q \|\rho\|_q.\]
  By Hölder's inequality and the fact that $\sum_j |f_j|^2$ is supported inside $\bigcup_j R_j$ which has measure $\leq c \varepsilon  N \delta^{d-1}$, we have
  \[ \Big\|(\sum_j |f_j|^2)^{\frac 1 2}\Big\|_p \leq (c \varepsilon  N \delta^{d-1})^{\frac 1 p - \frac 1 2} \Big\|(\sum_j |f_j|^2)^{\frac 1 2}\Big\|_2 = (c\varepsilon)^{\frac 1 p - \frac 1 2} N^{\frac 1 p} \|f\|_2 .\]
  The lemma follows.
\end{proof}
By the Cauchy-Schwarz and Hölder inequalities, we can bound
  \begin{align*} \Big|\sum_j \int (T_j f_j) \overline{g_j}\Big| & \leq \int (\sum_j |T_j f_j|^2)^{\frac 1 2} (\sum_j |g_j|^2)^{\frac 1 2}\\
    & \leq \Big\|(\sum_j |T_j f_j|^2)^{\frac 1 2}\Big\|_p \Big\| (\sum_j |g_j|^2)^{\frac 1 2}\Big\|_q\\
    & \leq \Big\|(\sum_j |f_j|^2)^{\frac 1 2}\Big\|_p \Big\| (\sum_j |g_j|^2)^{\frac 1 2}\Big\|_q\\
    & \leq N (c\varepsilon)^{\frac 1 p - \frac 1 2} \|f\|_2 \|g\|_q \|\rho\|_q.
  \end{align*}
  The last two lines are Lemma~\ref{lem:square_function_inequality} and Lemma~\ref{lem:consequence_Kakeya}.
  Taking into account Lemma~\ref{lem:technical}, we get
  \[ |\int \varphi(x/r) \widehat h(x) dx| \lesssim \frac{1}{|r|\delta^2} + \varepsilon^{\frac 1 p - \frac 1 2}.\]
    It remains to take the infimum over all $R_i$ to obtain the conclusion of Lemma~\ref{lem:Lpbddness_quantitative}.

We are left to prove Lemma~\ref{lem:technical}. The proof is an elaboration on the soft arguments in Lemma~\ref{lem:WOT-convergence}.
  \begin{proof}[Proof of Lemma~\ref{lem:technical}]
By a change of variable in the integral formula for the Fourier transform, we have
  \begin{equation*}\label{eq:FT_of_f_j} \widehat{f_j}(x) = \delta^{\frac{d-1}{q}}e^{-2i\pi(a_j \langle x,\vecteur_j\rangle + \langle P_j x,b_j\rangle)} \widehat f(\langle x,\vecteur_j\rangle) \widehat{\rho}(\delta P_j x) 
  \end{equation*}
  and
  \begin{equation*}\label{eq:FT_of_g_j} \widehat{g_j}(x) = \delta^{\frac{d-1}{p}}e^{-2i\pi(a_j \langle x,\vecteur_j\rangle + \langle P_j x,b_j\rangle)} \widehat g(\langle x,\vecteur_j\rangle) \widehat{\rho}(\delta P_j x).
  \end{equation*}
Therefore, since $h$ was defined so that $\widehat h=\widehat f\overline{\widehat g}$, 
  \begin{equation}\label{eq:FT_of_f_jg_j}\widehat f_j(x)\overline{ \widehat{g_j}(x)} = \delta^{d-1} \widehat{h}(\langle x,\vecteur_j\rangle) |\widehat{\rho}|^2(\delta P_j x).
  \end{equation}
  By the definition of the Fourier multiplier $T_j$, we therefore have
  \begin{align*} \int_{\R^d} (T_j f_j) \overline{g_j} &= \int_{\R^d} m(|u_j+\frac{1}{r} x|) \widehat{f_j}(x) \overline{\widehat g_j}(x) dx\\
    & =\delta^{d-1}\iint m(\sqrt{\big(1+\frac x r)^2+\frac{|y|^2}{r^2}}) \widehat{h}(x)  |\widehat \rho(\delta y)|^2 dx dy\\
& =\iint m(\sqrt{\big(1+\frac x r)^2+\frac{|y|^2}{\delta^{2}r^2}}) \widehat{h}(x) |\widehat \rho(y)|^2 dx dy.
  \end{align*}
Assume first  $r\geq 1$. We are in the setting of Lemma~\ref{lem:change_of_variable} with $F:=\widehat h$, $G:=|\widehat \rho|^2$, $r_1=r$, $r_2=\delta r$, so that $\frac{r_1}{r_2^2}=\frac{1}{\delta^2 r}$. The functions $F$ and $G$ being Schwartz functions that depend in $d,f,g,\rho$, we deduce Lemma~\ref{lem:technical} as a direct application of Lemma~\ref{lem:change_of_variable}.

If $r \leq -1$, we can make the change of variable $x\to -x$ and get
\[ \int_{\R^d} (T_j f_j) \overline{g_j} = \iint m(\sqrt{\big(1+\frac x {|r|})^2+\frac{|y|^2}{\delta^{2}r^2}}) \widehat{h}(-x) |\widehat \rho(y)|^2 dx dy\]
and the conclusion follows identically.
  \end{proof}
To complete the proof of Lemma~\ref{lem:Lpbddness_quantitative}, it remains to prove the next lemma that was used above.
\begin{lem}\label{lem:change_of_variable} Let $F \in \schwartz(\R)$, $G \in \schwartz(\R^{d-1})$, $m \in L_\infty(\R)$ of norm $1$ and $r_1 \geq r_2\geq 1$. Set
  \begin{align*} A &= \iint F(x) G(y) m\Big(\sqrt{(1+\frac{x}{r_1})^2+\frac{|y|^2}{r_2^2}}\Big) dx dy\\
    B &= \Big(\int F(x)  m(\exp(\frac x {r_1})) dx \Big)\Big(\int G(y)dy\Big)
  \end{align*}
  Then
  \[ |A - B |\lesssim \|(1+|y|^2) G\|_1  \Big(\frac{r_1}{r_2^2} \|F'\|_1 + \frac{1}{r_1} \|x^2 F'\|_1\Big)\]
\end{lem}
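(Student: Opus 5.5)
The plan is to compare, for each fixed $y$, the inner integral in $x$ with $\int F(x) m(\exp(x/r_1))\,dx$ by a change of variables, and then integrate against $G(y)$. For fixed $y$, set $\beta = |y|/r_2$. The argument of $m$ is $\sqrt{(1+x/r_1)^2+\beta^2}$, and on the region $x > -r_1$ we define $t$ by
\[ \exp(t/r_1) = \sqrt{(1+x/r_1)^2+\beta^2}, \qquad\text{so}\qquad t = \frac{r_1}{2}\log\big((1+x/r_1)^2+\beta^2\big). \]
This map is increasing in $x$ on $(-r_1,\infty)$. We then write the inner integral as $\int m(e^{t/r_1}) F(x(t)) x'(t)\,dt$. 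The comparison with $\int F(t) m(e^{t/r_1})\,dt$ costs at most $\|F(x(t))x'(t) - F(t)\|_{L_1(dt)}$, since $\|m\|_\infty = 1$. The region $x\le -r_1$ is handled separately.

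The key quantity is the displacement $t - x$. Write $u = 1+x/r_1$. Then
\[ t - x = \frac{r_1}{2}\log(u^2+\beta^2) - r_1(u-1). \]
Comparing with $t_0 = r_1\log u$, the case $\beta=0$, we get $t - t_0 = \frac{r_1}{2}\log(1+\beta^2/u^2) \le \frac{r_1\beta^2}{2u^2} = \frac{r_1|y|^2}{2r_2^2 u^2}$. Also $t_0 - x = r_1(\log u - (u-1)) = O(x^2/r_1)$ for $|x| \le r_1/2$.

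The Jacobian is
\[ \frac{dt}{dx} = \frac{u}{u^2+\beta^2}, \]
which is $1 + O(|x|/r_1 + \beta^2)$ in the same range. The $\beta^2$ error is $|y|^2/r_2^2 \le |y|^2 r_1/r_2^2$, since $r_1 \ge 1$.

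Rather than estimating $F\circ x - F$ and the Jacobian separately, I would use the cleaner identity
\[ \int F(x) m(\phi(x))\,dx - \int F(t) m(\phi_0(t))\,dt = -\int F'(s)\big(\Phi(s) - \Phi_0(s)\big)\,ds, \]
obtained by integrating by parts. Here $\Phi(s) = \int_{-\infty}^s m(\phi)$ and $\Phi_0(s) = \int_{-\infty}^s m(\phi_0)$, suitably centered at $0$ to avoid divergence. Substituting $\Phi(s) = M(t(s))$, with $M(\tau) = \int_0^\tau m(e^{\sigma/r_1})\,d\sigma$ a $1$-Lipschitz function, gives $|\Phi(s) - \Phi_0(s)| \le |t(s) - s| + |t(0)|$.

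With the centering handled carefully, we therefore get a difference bounded by $\int |F'(s)|\,|t(s)-s|\,ds$. Inserting $|t(s) - s| \lesssim s^2/r_1 + r_1|y|^2/r_2^2$ for $|s| \le r_1/2$ yields exactly $\frac{1}{r_1}\|x^2F'\|_1 + \frac{r_1|y|^2}{r_2^2}\|F'\|_1$. Constant terms such as $\frac{r_1}{r_2^2}|y|^2$ multiplying $\int|F'|$ are harmless. The centering constant $t(0)$ is likewise of size $r_1\beta^2$.

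Integrating against $|G(y)|$ then produces the factor $\|(1+|y|^2)G\|_1$.

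The main obstacle is the range $|x| \ge r_1/2$ (including $x \le -r_1$), where the change of variables degenerates: $u$ is near $0$ or negative, and the logarithm blows up. There I would avoid the change of variables and bound trivially. The contribution of that region to both $A$ and $B$ is at most $\int_{|x|>r_1/2}|F| \le \int_{|x|>r_1/2}\int_{|s|>|x|}|F'|$, which is $\lesssim r_1^{-1}\|x^2F'\|_1$ by a Chebyshev-type estimate, since $|x| \ge r_1/2$ gives $1 \le 4x^2/r_1^2 \le 4x^2/r_1$ once $r_1 \ge 1$. To make the integration by parts legitimate, I would first cut $F$ with a smooth cutoff at scale $r_1$; the cutoff error is again controlled by $\frac{1}{r_1}\|x^2F'\|_1$.

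I expect the bookkeeping of this truncation, together with the $u^{-2}$ factor in $t - t_0$ (bounded since $u \ge 1/2$), to be the delicate but routine part.
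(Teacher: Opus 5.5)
Your argument is correct in outline and takes a different route from the paper, but one step is misstated. Writing $\Phi(s)=M(t(s))$ (or, centered, $M(t(s))-M(t(0))$) drops the Jacobian. The substitution actually gives
\[ \Phi(s)=\int_0^s m\big(e^{t(x)/r_1}\big)\,dx=\int_{t(0)}^{t(s)} m\big(e^{\tau/r_1}\big)\,x'(\tau)\,d\tau, \]
which already differs from $M(t(s))-M(t(0))$ when $m\equiv 1$.

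The fix is to add $\int_0^{s}|1-t'(x)|\,dx$ to your bound on $|\Phi(s)-\Phi_0(s)|$. For $|x|\le r_1/2$ one has $|1-t'(x)|\le 2|x|/r_1+4\beta^2$, so this term is $\lesssim s^2/r_1+\beta^2|s|\lesssim s^2/r_1+r_1\beta^2$. That is the same order as $|t(s)-s|+|t(0)|$, so your final estimate is unchanged.

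Two bookkeeping remarks. First, after the cutoff, $(\chi F)'$ contains the term $\chi'F$. Its contribution is $\int|\chi'F|\,(s^2/r_1+r_1\beta^2)\lesssim(1+\beta^2)\,r_1^{-1}\|x^2F'\|_1$, which is harmless. Second, the tail estimate should be $\int_{|x|>cr_1}|F|\le 2\int_{|s|>cr_1}|s|\,|F'(s)|\,ds\lesssim r_1^{-1}\|s^2F'\|_1$. Here you apply $|s|\lesssim s^2/r_1$ after Fubini. Applying $1\lesssim x^2/r_1^2$ in the $x$ variable first would instead produce $\|s^3F'\|_1/r_1^2$.

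Compared with the paper: it performs the two-variable change $(x,y)\mapsto(u,y)$ on the truncated region $\{|x|\le\alpha r_1,\ |y|\le r_2\}$ and controls the Jacobian separately. It then compares $F(x(u,y))$ with $F(u)$ through $\int|F'|$ over an interval of length $\sim u^2/r_1+r_1|y|^2/r_2^2$. Because that length is measured at $u$ rather than at the point $v$ where $F'$ is evaluated, the paper needs the split into $\Sigma_1,\Sigma_2$ with small auxiliary parameters. It also needs a separate estimate ($B$ versus $B_1$) to undo the truncation.

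Integrating by parts first puts $F'$ at the original variable from the outset. Everything then reduces to a pointwise bound on the difference of primitives $\Phi-\Phi_0$, for which the $1$-Lipschitz property of primitives of $m$ suffices. This removes the $\Sigma_1/\Sigma_2$ split and any truncation in $y$, since your bounds hold for every $\beta$ because $\log(1+z)\le z$. It also never uses $r_1\ge r_2$, only $r_1,r_2\ge1$. Your route is close in spirit to the paper's viewpoint that $L_p$-boundedness controls the primitive of $m\circ\exp$. The paper's version has the merit of being a direct quantitative form of the change of variables already used in its qualitative weak-$*$ convergence lemma.
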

\begin{proof}The idea of the proof is straightforward: in the regime when $r_1=o(r_2^2)$ and $r_1\to \infty$ with $x,y$ bounded, $\sqrt{(1+\frac{x}{r_1})^2+\frac{y^2}{r_2^2}}$ is $\exp(\frac{x}{r_1} + o(\frac 1 {r_1}))$. Of course, since we do not have any regularity hypothesis on $m$, we cannot directly bound $A-B$ in these coordinates, so we make a change of variables to be able to use the regularity of $F$.

  Now the details. For convenience we normalize $G$ so that $\|(1+|y|^2) G\|_1 = 1$. Let $\alpha \leq \frac 1 2$, the precise value of which will be determined along the proof. We implicitely make sure that every $O(\cdot)$ that we write is independent of $\alpha$. Consider
\[ \Omega= \Big\{(x,y) \in \R\times \R^{d-1} \mid |x|\leq \alpha r_1, |y|\leq r_2\Big\}\]
  and
\[A_1:= \iint_\Omega F(x) G(y) m\Big(\sqrt{(1+\frac{x}{r_1})^2+\frac{|y|^2}{r_2^2}}\Big) dx dy.\]
  We have
\begin{align*}  |A-A_1|& \leq \iint_{\R^d\setminus \Omega} |F(x) G(y)| dx dy\\& \leq \|F\|_{L_1([-\alpha r_1,\alpha r_1]^c)}\|G\|_1+\|F\|_1 \|G\|_{L_1(B(r_2)^c)},
\end{align*}
that we simply bound (recall $\|(1+|y|^2)G\|_1 =1$) as
\begin{equation}\label{eq:AA1} |A-A_1| \leq \frac{1}{\alpha r_1} \|x F\|_{1}+\frac{1}{r_2^2} \|F\|_1.
\end{equation}
Consider the change of variables $(x,y) \in \Omega \mapsto (u,y)$ where $u = u(x,y) \in \R$ is characterized by $e^{\frac u{r_1}} = \sqrt{(1+\frac{x}{r_1})^2+\frac{|y|^2}{r_2^2}}$. It is injective on $\Omega$, its image is
  \[ \Sigma = \Big \{(u,y) \in \R^d \mid |y|\leq  r_2, \exp(\frac{2u}{r_1}) - \frac{|y|^2}{r_2^2} \in [(1-\alpha)^2,(1+\alpha)^2]\Big\}\]
  and the inverse is $(u,y)\mapsto (x(u,y),y)$ where $x(u,y)=r_1\sqrt{\exp(\frac{2u}{r_1}) - \frac{|y|^2}{r_2^2}} - r_1$. Observe for further use that for $(x,y) \in \Omega$, $e^{\frac{2u}{r_1}} \in [ (1-\alpha)^2,(1+\alpha)^2+1] \subset [1/4,3]$, so $\frac{u}{r_1} = O ( e^{\frac{2u}{r_1}}-1)$ and 
  \begin{equation}\label{eq:apriori_bound_u} |u(x,y)| = O(|x| + \frac{|y|^2r_1}{r_2^2}).
  \end{equation}
Performing the change of variable in the integral, we obtain
  \[ A_1 = \iint_\Sigma F(x(u,y)) G(y) m(\exp(\frac u{r_1})) \partial_u x(u,y) du dy.\]
  We will see that $A_1$ is close to
  \[B_1:=\iint_\Sigma F(u) G(y) m(\exp(\frac u {r_1})) du dy.\]
  First observe that $\partial_u x(u,y) = \frac{\exp(2u/r_1)}{\sqrt{\exp(2u/r_1) - |y|^2/r_2^{2}}}$ is close to $1$: 
  \[\Big|\partial_u x(u,y) - 1 \Big| =O\Big( \frac{|u|}{r_1} + \frac{|y|^2}{r_2^2}\Big)\leq C \partial_u x(u,y) \big(\frac{|x|}{r_1} + \frac{|y|^2}{r_2^2}\big).\]
The second inequality is \eqref{eq:apriori_bound_u}, because on $\Sigma$, $\partial_u x(u,x)$ is bounded below. By the same change of variables, it follows that
  \[ A_2 := \iint_\Sigma F(x(u,y)) G(y) m(\exp(\frac u{r_1}))du dy\]
  satisfies
  \begin{equation}\label{eq:A1A2} |A_1-A_2|\leq C \iint_\Omega |F(x) G(y)| \big(\frac{|x|}{r_1} + \frac{|y|^2}{r_2^2}\big)dx dy \leq C \Big(\frac{1}{r_1} \|xF\|_1 + \frac{1}{r_2^2} \|F\|_1\Big).
  \end{equation}
  Then observe that, on $\Sigma$, $x(u,y)$ is close to $u$:
  \[ |x(u,y)-u| = O\big(\frac{u^2}{r_1} + \frac{r_1 |y|^2}{r_2^2}\big).\]
  We can therefore bound
  \[ |F(x(u,y)) - F(u)| \leq \int_{-C'\big(\frac{u^2}{r_1} + \frac{r_1 |y|^2}{r_2^2}\big)}^{C'\big(\frac{u^2}{r_1} + \frac{r_1 |y|^2}{r_2^2}\big)} |F'(u+s)| ,\]
%  \[ |F(x(u,y)) - F(u)| \leq \int |F'(u+s)| 1_{|s| \leq \big[-C'' \big(\frac{u^2}{r_1} + \frac{r_1 y^2}{r_2^2}\big),C''\big(\frac{u^2}{r_1} + \frac{r_1 y^2}{r_2^2}\big)\big]} ds,\]
    which after integration and change of variable $s\mapsto v=u+s$ leads to
    \[ |A_2-B_1| \leq \iint_\Sigma \int |F'(v) G(y)| 1_{|v-u| \leq C'\big(\frac{u^2}{r_1} + \frac{r_1 |y|^2}{r_2^2}\big)} dv du dy.\]
    Let $\beta \leq  1$ be some parameter that we will fix shortly. Partition $\Sigma$ as $\Sigma_1\cup \Sigma_2$ where $\Sigma_1=\{(u,y)\in \Sigma \mid  \beta |u|/r_1 \leq |y|/r_2\}$ and $\Sigma_2=\Sigma \setminus \Sigma_1$. Then we have
    \begin{multline*} |A_1 - B_1|\leq \iint_{\Sigma_1} \int |F'(v) G(y)| 1_{|v-u| \leq C'(1+\beta^{-2}) \frac{r_1 |y|^2}{r_2^2}} dv du dy\\ + \iint_{\Sigma_2} \int |F'(v) G(y)| 1_{|v-u| \leq 2 C'\frac{u^2}{r_1}} dv du dy.
    \end{multline*}
The first integral is obviously bounded by
\[ \int_{y \in \R^d} \int_{v \in \R} |F'(v) G(y)| 2 C'(1+\beta^{-2})\frac{r_1 |y|^2}{r_2^2} dv dy = 2C'(1+\beta^{-2}) \frac{r_1}{r_2^2} \|F'\|_1 \|G\|_1.\]
  For the second interval, we need that $\alpha$ and $\beta$ are small enough.  Indeed, on $\Sigma_2$ we know from \eqref{eq:apriori_bound_u} and the fact that $|u|/r_1=O(1)$, that
  \[ \frac{u}{r_1} = O(\alpha + \beta),\]
  so if $\alpha$ and $\beta$ are chosen to be small enough, we have that on $\Sigma_2$, $2C' \frac{|u|}{r_1} \leq \frac 1 2$. This implies that, if $|v-u| \leq 2 C'\frac{u^2}{r_1}$, we have $|v-u|\leq |u|/2$, so $|u|\leq 2|v|$, and finally $|v-u| \leq 8C'\frac{v^2}{r_1}$, so the second integral is less than
  \[ \int_{\R^{d-1}} \int_\R \int_\R |F'(v) G(y)| 1_{|v-u| \leq 8 C'\frac{v^2}{r_1}} dv du dy = \frac{16C'}{r_1} \|G\|_1 \|v^2 F'\|_1.\]
  Combining the two estimates, we get (with now $x$ the integration variable)
  \begin{equation}\label{eq:A2B1} |A_2-B_1| \lesssim  \frac{r_1}{r_2^2} \|F'\|_1 + \frac{1}{r_1} \|x^2 F'\|_1.
    \end{equation}
  We finally move to bound $|B-B_1|$. Again, we need that $\alpha$ is small enough so that it satisfies $e^{2\alpha^2}\leq (1+\alpha)^2$ and $e^{-2\alpha^2} - \alpha \geq (1-\alpha)^2$. Indeed, then every $(u,y)$ with $|u|\leq \alpha^2r_1$ and $\frac{|y|^2}{r_2^2} \leq \alpha$ satisfies $e^{\frac{2u}{r_1}}-\frac{|y|^2}{r_2^2} \in [e^{-2\alpha^2}-\alpha,e^{2\alpha^2}] \subset [(1-\alpha)^2,(1+\alpha)^2)]$, that is $(u,y) \in \Sigma$. By the contrapositive, a pair $(u,y)\notin \Sigma$ has to satisfy $|u|> \alpha^2 r_1$ or $|y| \geq r_2  \sqrt{\alpha}$. As a consequence, we have
    \[|B-B_1|\leq \|F\|_{L_1([-\alpha^2 r_1,\alpha^2 r_1]^c)} \|G\|_1 + \|F\|_1 \|G\|_{L_1([-r_2 \sqrt{\alpha},r_2 \sqrt{\alpha}]^c)},\]
    that we simply bound by
    \begin{equation}\label{eq:BB1} |B-B_1|\leq \frac{1}{\alpha^2 r_1}\|xF\|_1 + \frac{1}{\alpha r_2^2}\|F\|_1.\end{equation}
    Combining all the bounds, we see that there is a choice of $\alpha$ that makes all bounds \eqref{eq:AA1}, \eqref{eq:A1A2}, \eqref{eq:A2B1} and \eqref{eq:BB1} valid. This proves that
\[ |A-B| \lesssim \frac{1}{r_1} \|xF\|_1+\frac{1}{r_2^2} \|F\|_1 + \frac{r_1}{r_2^2} \|F'\|_1 + \frac{1}{r_1}\|x^2 F'\|_1.\]
To obtain the conclusion of the lemma, just use the following general inequalities valid for Schwartz functions on $\R$ and real numbers $r_2 \geq 1$:
  \begin{equation}\label{eq:L1xFL1xxFprime} \|x F\|_1 \leq \frac 1 2 \|x^2F'\|_1,
  \end{equation}
  \begin{equation}\label{eq:L1FL1Fprimesuboptimal} \frac{1}{r_2^2}\|F\|_1 \leq \frac{r_1}{r_2^2} \|F'\|_1 + \frac{1}{r_1}\|x^2 F'\|_1.
  \end{equation}
  To prove \eqref{eq:L1xFL1xxFprime}, compute
  \begin{align*} \int_{-\infty}^0 |xF(x)|dx & = \int_{-\infty}^0 |x| \Big|\int_{-\infty}^x F'(t) dt\Big|dx\\
    & \leq \int_{-\infty}^0 |F'(t)| \Big|\int_{t}^0 |x| dx\Big| dt = \frac{1}{2} \int_{-\infty}^0 |F'(t)| t^2 dt.
  \end{align*} Similarly, $\int_0^{\infty} |xF(x)|dx \leq \int_0^\infty t^2 |F'(t)|dt$. For \eqref{eq:L1FL1Fprimesuboptimal} we can start with the same proof to get
  \[ \|F\|_1 \leq \|xF'\|_1,\]
  which by the inequality $|x| \leq r_1+x^2/r_1$ gives $\|F\|_1 \leq r_1 \|F'\|_1 + \frac{1}{r_1} \|x^2 F'\|_1$. This implies \eqref{eq:L1FL1Fprimesuboptimal} (recall $r_2 \geq 1$).
\end{proof}
\begin{rem} Assume $1 \leq p<2$ and let $q=\frac{p}{p-1}$ be the conjugate exponent. More precisely, the proof shows that whenever $h=f\ast g^*$ with $f$ supported in $[0,1]$ and $g$ supported in $[a,b]$,%% \footnote{Or $[2,\infty]$ if we request that in the definition of $\fonction_d(\delta)$ that the sets $\bigcup_{n \geq 2} R_i+n\vecteur(R_i)$ are pairwise disjoint}
  then the conclusion of Lemma~\ref{lem:Lpbddness_quantitative} holds with
  \[ C_d(h) \lesssim_d \frac{1}{r\delta^2} \|\widehat{h}'\|_1 + \frac{1}{r}  \|x^2\widehat{h}'\|_1 + \fonction_d^{[a,b]}(\delta)^{\frac 1 p - \frac 1 2} \|f\|_2 \|g\|_q.\]
\end{rem}
\subsection{Consequences}
The following is a precise form of Corollary~\ref{cor:Lpbddness_LP_quantitative}. The exponent $\frac 1 4$ is not optimized.
\begin{cor}\label{cor:Lpbddness_LP_quantitative_precise}
Let $\varphi:\R\to \R$ such that the multiplier with symbol $m(\xi)=\varphi(\log |\xi|)$ is bounded on $L_p(\R^d)$. Then for almost every $s,t\in \R$,
  \[ |\varphi(s) - \varphi(t)| \lesssim \|T_m\|_p \left(\int_0^{|s-t|^{1/4}} \frac{\fonction_d(\delta)^{|\frac 1 p - \frac 1 2|}}{\delta} d\delta + |s-t|^{\frac 1 4}\right).\]
\end{cor}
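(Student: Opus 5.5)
The plan is to combine the Littlewood--Paley bound of Theorem~\ref{thm:Lpbddness_LP_quantitative} with the modulus-of-continuity estimate of Lemma~\ref{lem:continuity_from_LittlewoodPaley}. Normalize $\|T_m\|_p=1$ and write $\gamma=|\frac 1 p-\frac 1 2|$, $h=|s-t|$. We may assume $h\leq 1$: otherwise the bound is trivial since $|\varphi(s)-\varphi(t)|\leq 2\|\varphi\|_\infty\leq 2\|T_m\|_p$, using that an $L_p$-bounded multiplier has bounded symbol.

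\textbf{Step 1 (bound for each block).} For every $n\geq 0$, Theorem~\ref{thm:Lpbddness_LP_quantitative} gives
\[ a_n:=\|W_n\ast\varphi\|_\infty \lesssim \inf_{\delta\in(0,1)} \Big(\fonction_d(\delta)^{\gamma}+\frac{1}{2^n\delta^2}\Big).\]
Choose $\delta=\delta_n:=2^{-n/4}$. Then $\frac{1}{2^n\delta_n^2}=2^{-n/2}$, so $a_n\lesssim \fonction_d(2^{-n/4})^\gamma+2^{-n/2}$. The exponent $1/4$ is only a convenient choice.

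\textbf{Step 2 (sum over blocks).} If $\int_0^1 \fonction_d(\delta)^\gamma\frac{d\delta}{\delta}=\infty$ there is nothing to prove, so assume this integral is finite. Then $\sum_n a_n<\infty$, provided we convert the sum into an integral. For this we use that $\fonction_d$ is essentially monotone: a family of $\delta$-tubes can be fattened to $\delta'$-tubes with $\delta'\in[\delta,2\delta]$, up to constants. If exact monotonicity is not available, I would instead pick $\delta$ within each dyadic range $[2^{-(n+1)/4},2^{-n/4}]$ to nearly minimize $\fonction_d$, which is allowed because Step 1 is an infimum over $\delta$. Either way
\[\fonction_d(2^{-n/4})^\gamma\lesssim\int_{2^{-(n+1)/4}}^{2^{-n/4}}\fonction_d(\delta)^\gamma\frac{d\delta}{\delta}.\]
Lemma~\ref{lem:continuity_from_LittlewoodPaley} then applies: $\varphi$ is continuous and, for almost every $s,t$,
\[ |\varphi(s)-\varphi(t)|\lesssim \sum_n \min(2^n h,1)\,a_n.\]

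\textbf{Step 3 (split the sum).} Let $N$ satisfy $2^{-N}\approx h$, and split the sum at $N$.

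For $n\geq N$, bound the weight by $1$. This gives
\[\sum_{n\geq N}\big(\fonction_d(2^{-n/4})^\gamma+2^{-n/2}\big)\lesssim \int_0^{2^{-N/4}}\fonction_d(\delta)^\gamma\frac{d\delta}{\delta}+2^{-N/2}.\]
Since $2^{-N/4}\approx h^{1/4}$, this is the integral in the statement plus $O(h^{1/2})$.

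For $n<N$, the weight is $2^n h$. The term $\sum_{n<N}2^nh\,2^{-n/2}$ is $O(h\,2^{N/2})=O(h^{1/2})$. The term $\sum_{n<N}2^n h\,\fonction_d(2^{-n/4})^\gamma$ needs care, because small $n$ corresponds to large $\delta$. Here I use only $\fonction_d\leq 1$, giving $\sum_{n<N}2^nh\lesssim 1$. That is too weak, so this term must be handled more carefully.

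\textbf{Step 4 (the main obstacle).} The better route is to return to $a_n$ and drop the $\fonction_d$ term when $n$ is small. There $a_n\leq C$ is trivial, but $\sum_{n<N}2^n h\,a_n$ must still be controlled. Split at a threshold $n_0$ with $2^{n_0}\approx h^{-3/4}$. For $n<n_0$, the trivial bound $a_n\lesssim 1$ gives $\sum_{n<n_0}2^nh\lesssim h^{1/4}$. For $n_0\leq n<N$, use $2^nh\leq 1$ and Step 2, getting $\int_{2^{-N/4}}^{2^{-n_0/4}}\fonction_d(\delta)^\gamma\frac{d\delta}{\delta}$. Since $2^{-n_0/4}\approx h^{3/16}$, this range is not contained in $(0,h^{1/4}]$, so this choice fails. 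The fix is to move the threshold: handle all $n$ with $2^n\leq h^{-3/4}$ trivially, for a total of $O(h^{1/4})$, and all $n$ with $2^n>h^{-3/4}$ by the block bound with weight $1$. The $\delta$ in Step 1 must then be rescaled so that $\delta_n\leq h^{1/4}$ for these $n$. Taking $\delta_n=2^{-n/3}$, so that $\frac{1}{2^n\delta_n^2}=2^{-n/3}$, gives $\delta_n\leq h^{1/4}$ exactly when $2^n\geq h^{-3/4}$. The remaining terms sum to $\int_0^{h^{1/4}}\fonction_d(\delta)^\gamma\frac{d\delta}{\delta}+O(h^{1/4})$, which is the claimed estimate. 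The delicate points are this bookkeeping of exponents and the essential monotonicity of $\fonction_d$ used in Step 2.
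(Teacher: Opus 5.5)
Your final argument, the one at the end of Step 4 with threshold $2^n\approx h^{-3/4}$ and $\delta\in[2^{-(n+1)/3},2^{-n/3}]$, is correct. It differs from the paper's in how the low frequencies are handled.

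The paper keeps the Kakeya bound $c_n=\inf_{\delta}\big(\fonction_d(\delta)^{\gamma}+2^{-n}\delta^{-2}\big)$ for every $n$. It uses that $2^nc_n$ is non-decreasing to get $h\sum_{n\leq N}2^nc_n\leq h(N+1)2^Nc_N\leq c_N$, with $N$ maximal such that $h(N+1)2^N\leq 1$. What remains is $\sum_{n\geq N}c_n\lesssim\int_0^{2^{-N/3}}\fonction_d(\delta)^\gamma\frac{d\delta}{\delta}+2^{-N/3}$, where $2^{-N}\approx h|\log h|$. This is an estimate at scale $(h|\log h|)^{1/3}$, which the paper then weakens to $h^{1/4}$.

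You instead discard the Kakeya information for $2^n\leq h^{-3/4}$. You use only $a_n\lesssim 1$ there, at a cost of $h\cdot h^{-3/4}=h^{1/4}$, and give weight $1$ to the remaining blocks. This is more elementary: it needs neither the monotonicity of $2^nc_n$ nor the logarithmic choice of $N$. Balancing your two pieces is exactly what produces the exponent $\frac14$ in the statement, whereas the paper's route keeps the slightly sharper $(h|\log h|)^{1/3}$ before relaxing. Your explicit reduction to the case $\int_0^1\fonction_d(\delta)^\gamma\frac{d\delta}{\delta}<\infty$ is legitimate since $\fonction_d\leq 1$. It is needed to invoke Lemma~\ref{lem:continuity_from_LittlewoodPaley}, and the paper leaves it implicit.

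Some remarks on the write-up:
\begin{itemize}
\item The fattening argument for an essential monotonicity of $\fonction_d$ does not work as stated, because widening the tubes can destroy the pairwise disjointness of the $\overline{R_j}$.
\item It is also unnecessary. Your alternative is exactly the paper's device: every $\delta$ in the dyadic range is admissible in the infimum, so $a_n$ is at most the $\frac{d\delta}{\delta}$-average of $\fonction_d^\gamma$ over $[2^{-(n+1)/3},2^{-n/3}]$, plus $O(2^{-n/3})$.
\item This device bounds $a_n$ directly, not $\fonction_d(2^{-n/4})^\gamma$, so the displayed inequality in Step 2 should be restated as a bound on $a_n$.
\item A clean version should use the ranges $[2^{-(n+1)/3},2^{-n/3}]$ from the start, including for the summability in Step 2. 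It should also drop the abandoned splittings in Step 3 and at the beginning of Step 4.
\end{itemize}
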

\begin{proof} Without loss of generality we can assume $\|T_m\|=1$. Write
  \[ c_n = \inf_{\delta \in (0,1)} \left(\fonction_d(\delta)^{\big|\frac 1 p - \frac 1 2\big|} + \frac{1}{2^n \delta^2} \right).\]
  Taking $\delta \in [2^{-(n+1)/3},2^{-n/3}]$ and integrating, we have
  \[ c_n \leq \frac{3}{\log 2} \int_{2^{-(n+1)/3}}^{2^{-n/3}} \frac{\fonction_d(\delta)^{\big|\frac 1 p - \frac 1 2\big|}}{\delta}d\delta + 2^{-(n-2)/3},\]
  and therefore for every integer $N$,
  \[ \sum_{n \geq N} c_n \lesssim \int_0^{2^{-N/3}} \frac{\fonction_d(\delta)^{\big|\frac 1 p - \frac 1 2\big|}}{\delta}d\delta + 2^{-N/3}.\]
Let $s,t \in \R$ with $|s-t|\leq 1$. Lemma~\ref{lem:continuity_from_LittlewoodPaley} and Theorem~\ref{thm:Lpbddness_LP_quantitative} allows us to write for almost every $s,t$,
  \begin{align*} |\varphi(s)-\varphi(t)| & \lesssim_d  \sum_{n} \min(1, 2^n|s-t|) c_n\\
    & \leq \sum_{n > N} c_n + |s-t| \sum_{n\leq N}  2^n c_n,
  \end{align*}
  for any integer $N \geq 0$. The sequence $2^n c_n$ is clearly non-decreasing, so $\sum_{n \leq N} 2^n c_n \leq (N+1) 2^N c_N$. So if $N \geq 0$ is the largest integer such that $|s-t| (N+1) 2^N \leq 1$, then we have $2^N=O(\frac{1}{|s-t| |\log |s-t||})$, so for $|s-t|$ small enough $2^N \leq |s-t|^{-3/4}$, and 
\[ |\varphi(s)-\varphi(t)| \lesssim \sum_{n \geq N} c_N \lesssim \int_0^{|s-t|^{1/4}}  \frac{\fonction_d(\delta)^{\big|\frac 1 p - \frac 1 2\big|}}{\delta}d\delta + 2^{-N/4}.\qedhere\]\end{proof} 
\subsection{Relaxation on the Besicovich construction}
We end this short discussion with a comment. In the definition of $\fonction_d(\delta)$, we can replace the requirement that the $\overline{R_i}$ are pairwise disjoint by the requirement that they are almost pairwise disjoint. For example, for $4/3\leq p\leq 4$ and $1<a<b\leq \infty$, Theorem~\ref{thm:Lpbddness_LP_quantitative} remains true of instead of $\fonction_d(\delta)$ we consider the smallest $\varepsilon$ such that there is a family of $\delta$-tubes such that 
\begin{equation}\label{eq:gooddeltatubesbis}   \frac{\Big| \bigcup_j R_j \Big|}{\sum_j |R_j|} \cdot \frac{\sum_{i,j} |\overline{R_i}^{[a,b]}\cap \overline{R_j}^{[a,b]}|}{\sum_j |R_j|} \leq \varepsilon.
\end{equation}
Obviously for $[a,b]=[2,3]$ this is smaller that $\fonction_d(\delta)$ because $\frac{\sum_{i,j} |\overline{R_i}\cap \overline{R_j}|}{\sum_j |R_j|}=1$ when the $\overline{R_i}$ are pairwise disjoint. The only difference in the proof is the following variant of Lemma~\ref{lem:consequence_Kakeya}.
  \begin{lem} Let $\frac 4 3  \leq p \leq 2$ and $q=\frac{p}{p-1}$ the conjugate exponent. Let $R_j$ be a family of $\delta$-tubes satisfying \eqref{eq:gooddeltatubesbis}. Let $f,g \in C^\infty_c(\R)$ and $\sigma,\rho \in C^\infty_c(\R^{d-1})$ supported in $[1,2]$, $[a,b]$, $B(0,1)$ and $B(0,1)$ respectively. For every tube $R_j$ image of $[0,1] \times B(0,\delta)$ by the isometry $U_j$, let $f_j$ and $g_j$ be the images of the functions $(x,y)\mapsto f(x) \delta^{-\frac{d-1}p} \sigma(y/\delta)$ and $(x,y)\mapsto g(x) \delta^{-\frac{d-1}q} \rho(y/\delta)$ by $U_j$. Then
    \begin{equation}\label{eq:LpLqsquareinequalitygain} \Big\|(\sum_{j=1}^N |f_j|^2)^{\frac 1 2}\Big\|_p \Big\| (\sum_{j=1}^N |g_j|^2)^{\frac 1 2}\Big\|_q \leq C N \varepsilon^{\frac 1 p - \frac 1 2},
\end{equation}
    for a real $C=C(f,g,\rho,\sigma,p)$.
    %=c^{\frac 2 p -1} N \|f\|_2 \|g\|_{\frac{2p}{3p-4}} \|\rho\|_2\|\rho\|_{\frac{2q}{3p-4}}.\]    where
    %%     \begin{equation}\label{eq:LpLqsquareinequalitygain} \Big\|(\sum_{j=1}^N |f_j|^2)^{\frac 1 2}\Big\|_p \Big\| (\sum_{j=1}^N |g_j|^2)^{\frac 1 2}\Big\|_q \leq \sqrt{c}\|f\|_2 \|g\|_\infty \|\rho\|_{\infty} N \varepsilon^{\frac 1 p - \frac 1 2}.
%% \end{equation}
  \end{lem}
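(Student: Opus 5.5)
The proof follows that of Lemma~\ref{lem:consequence_Kakeya}, with the disjointness of the $\overline{R_j}$ replaced by a quantitative $L_q$-bound on the square function of the $g_j$ that only involves the overlaps $\sum_{i,j}|\overline{R_i}^{[a,b]}\cap\overline{R_j}^{[a,b]}|$. We normalize the tubes so that $\sum_j|R_j|=c N\delta^{d-1}$, write $S=\sum_j |R_j|$, $\varepsilon_1=|\bigcup R_j|/S$ and $\varepsilon_2=\sum_{i,j}|\overline{R_i}^{[a,b]}\cap\overline{R_j}^{[a,b]}|/S$, so that $\varepsilon_1\varepsilon_2\leq\varepsilon$. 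Note that $\varepsilon_2\geq 1$, since the diagonal terms alone give $S$.

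\textbf{The $f_j$ factor.} Since $\sum_j|f_j|^2$ is supported in $\bigcup_j R_j$, H\"older's inequality gives, as in Lemma~\ref{lem:consequence_Kakeya},
\[ \Big\|(\sum_j|f_j|^2)^{\frac12}\Big\|_p\leq (\varepsilon_1 S)^{\frac1p-\frac12}\Big(\sum_j\|f_j\|_2^2\Big)^{\frac12}\lesssim \varepsilon_1^{\frac1p-\frac12} N^{\frac1p}. \]
Here we use $\|f_j\|_2^2\simeq\delta^{(d-1)(1-\frac2p)}$, so that $S^{\frac1p-\frac12}(N\delta^{(d-1)(1-\frac2p)})^{\frac12}\simeq N^{\frac1p}$.

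\textbf{The $g_j$ factor.} This is the main step. Since $2\leq q\leq 4$, write $\|(\sum_j|g_j|^2)^{1/2}\|_q=\|G\|_{q/2}^{1/2}$ with $G=\sum_j|g_j|^2$ and $1\leq q/2\leq 2$, and interpolate between $L_1$ and $L_2$. On the one hand,
\[ \|G\|_1=\sum_j\|g_j\|_2^2\simeq N\delta^{(d-1)(1-\frac2q)}. \]
On the other hand,
\[ \|G\|_2^2=\sum_{i,j}\int|g_i|^2|g_j|^2\leq\|g\|_\infty^4\|\rho\|_\infty^4\,\delta^{-\frac{4(d-1)}q}\sum_{i,j}|\overline{R_i}^{[a,b]}\cap\overline{R_j}^{[a,b]}|\lesssim \varepsilon_2 N\delta^{(d-1)(1-\frac4q)}. \]
H\"older's inequality then gives $\|G\|_{q/2}\leq\|G\|_1^{\theta}\|G\|_2^{1-\theta}$ with $\frac2q=\theta+\frac{1-\theta}2$, that is $1-\theta=2-\frac4q=\frac4p-2$. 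Tracking the powers of $\delta$ and $N$, every power of $\delta$ cancels and we get
\[ \Big\|(\sum_j|g_j|^2)^{\frac12}\Big\|_q\lesssim N^{\frac1q}\,\varepsilon_2^{\frac{1-\theta}4}=N^{\frac1q}\varepsilon_2^{\frac1p-\frac12}. \]
This bookkeeping of exponents is the delicate point. The identity $\frac{1-\theta}{4}=\frac1p-\frac12$ is exactly what allows the exponent on $\varepsilon_2$ to match the one on $\varepsilon_1$. The restriction $p\geq\frac43$, i.e.\ $q\leq4$, is precisely the condition $q/2\leq 2$ needed for this interpolation between $L_1$ and $L_2$.

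\textbf{Conclusion.} Multiplying the two bounds gives
\[ \Big\|(\sum_j|f_j|^2)^{\frac12}\Big\|_p\Big\|(\sum_j|g_j|^2)^{\frac12}\Big\|_q\lesssim N(\varepsilon_1\varepsilon_2)^{\frac1p-\frac12}\leq CN\varepsilon^{\frac1p-\frac12}, \]
with $C$ depending on $\|f\|_2,\|g\|_\infty,\|\sigma\|_2,\|\rho\|_\infty$ and $p$. This is \eqref{eq:LpLqsquareinequalitygain}.
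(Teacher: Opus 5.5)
Your proof is correct, and its two core estimates are the paper's: H\"older's inequality on $\bigcup_j R_j$ for the $f_j$, and the overlap bound $\big\|\sum_j|g_j|^2\big\|_2^2\leq \|g_j\|_\infty^4\sum_{i,j}|\overline{R_i}^{[a,b]}\cap\overline{R_j}^{[a,b]}|$. The paper phrases the latter as the pointwise inequality $\sum_j|g_j|^2\leq \|g_j\|_\infty^2\,\overline{N}_{\mathcal R}$.

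The difference is in how the intermediate exponents are reached. The paper only works out the endpoint $p=\frac43$, $q=4$, where $\|(\sum_j|g_j|^2)^{1/2}\|_4=\|G\|_2^{1/2}$ exactly. It notes that $p=2$ is trivial and then says ``by interpolation'' for $\frac43<p<2$. You instead fix $p$ from the start, apply H\"older at exponent $p$ to the $f_j$, and interpolate the single function $G=\sum_j|g_j|^2$ between $L_1$ and $L_2$ by log-convexity of $L_r$-norms. Your bookkeeping is right: the factor $N^{1/q}$, the cancellation of every power of $\delta$, and $\frac{1-\theta}{4}=\frac1p-\frac12$. It also makes clear why $p\geq\frac43$ is needed.

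Your route is more elementary and self-contained. The paper leaves its interpolation step implicit, and since the normalizations $\delta^{-(d-1)/p}$ and $\delta^{-(d-1)/q}$ move with $p$, it needs a little set-up: a suitably normalized linear map into $L_q(\ell_2)$ before Riesz--Thorin applies. What the paper's route buys is a marginally better constant. It measures $g$ in $L_{\frac{2p}{3p-4}}$, whereas you get $\|g\|_2^{\theta}\|g\|_\infty^{1-\theta}$ with $\theta=3-\frac4p$, which dominates it by log-convexity. Your constant also involves $\|g\|_2$ and $\|\rho\|_2$ through $\|G\|_1$; you left these out of your list, but this is harmless. The statement only asks for some $C(f,g,\rho,\sigma,p)$, so none of this matters.

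One small point: like the paper's own proof, you use that $f_j$ is supported in $R_j$. That requires $f$ to be supported in $[0,1]$, not in $[1,2]$ as the statement literally says. This is a typo in the statement, consistent with the earlier remark where $f$ is supported in $[0,1]$, not a flaw in your argument.
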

  \begin{proof}
    The case $p=2$ is clear with $C(f,g,\rho,2)= \|f\|_2 \|g\|_2 \|\sigma\|_2 \|\rho\|_2 $. We will prove that for $p =\frac 4 3$, \eqref{eq:LpLqsquareinequalitygain} holds with $C(f,g,\sigma,\rho,4/3)=\sqrt{c} \|f\|_2 \|g\|_\infty \|\sigma\|_2 \|\rho\|_\infty$, where $c$ is the volume of the unit ball in $\R^{d-1}$. By interpolation, this will imply the general case with
    \[C(f,g,\sigma,\rho,p)=c^{\frac 2 p -1} \|f\|_2 \|g\|_{\frac{2p}{3p-4}} \|\sigma\|_2\|\rho\|_{\frac{2q}{3p-4}}.\]
    So let us assume $p =\frac 4 3$, so $q=4$. 
    Observe that $f_j$ and $g_j$ are supported in $R_j$ and $\overline{R_j}^{[a,b]}$ respectively. By H\"older's inequality we bound
    %    \[ \Big\|(\sum_j |f_j|^2)^{\frac 1 2}\Big\|_{4/3} \leq \left(c \frac{|\bigcup_{j=1}^N R_j|}{\sum_j |R_j|}\right)^{\frac 1 4} N^{\frac 3 4} \|f\|_2 \|\rho\|_2 .\]
    \[ \Big\|(\sum_j |f_j|^2)^{\frac 1 2}\Big\|_{\frac 4 3} \leq |\bigcup_{j=1}^N R_j|^{\frac 1 4} \Big\|(\sum_j |f_j|^2)^{\frac 1 2}\Big\|_{2} =  |\bigcup_{j=1}^N R_j|^{\frac 1 4} N^{\frac 1 2} \delta^{-\frac{d-1}{4}} \|f\|_2 \|\sigma\|_2 .\]
Consider the functions $N_\mathcal{R},\overline{N}_{\mathcal R}:\R^d\to \N$
\begin{align*} N_{\mathcal R}(x) = \sum_{j=1}^N 1_{x \in R_j}, \ \ \ 
  \overline{N}_{\mathcal R}(x) = \sum_{j=1}^N 1_{x \in \overline{R_j}^{[a,b]}}.
\end{align*}
%% \begin{align*} N_{\mathcal R}(x) &= \sum_{j=1}^N 1_{x \in R_j},\\
%%   \overline{N}_{\mathcal R}(x) &= \sum_{j=1}^N 1_{x \in \overline{R_j}}.
%% \end{align*}
Then we have
\[ |\bigcup_{j=1}^N R_j| = \sum_j \int_{R_j} \frac{1}{N_{\mathcal{R}}}(x) dx, \ \ \sum_{i,j} |\overline{R_i}^{[a,b]} \cap \overline{R_j}^{[a,b]}| =\|\overline{N}_{\mathcal R}\|_{2}^2.\]
Moreover we have the pointwise inequality
\[ (\sum_j |g_j|^2)^{\frac 1 2} \leq \|g_j\|_\infty \overline{N}_{\mathcal R}^{\frac 1 2} = \|g\|_\infty \|\rho\|_\infty \delta^{-\frac{d-1}{4}} \overline{N}_{\mathcal R}^{\frac 1 2},\]
so
\begin{align*} \Big\| (\sum_{j=1}^N |g_j|^2)^{\frac 1 2}\Big\|_4 &\leq  \|g\|_\infty \|\rho\|_\infty \delta^{-\frac{d-1}{4}} \|\overline{N}_{\mathcal R}\|_{2}^{\frac 1 2}\\
  &= \|g\|_\infty \|\rho\|_\infty \delta^{-\frac{d-1}{4}} \big(\sum_{i,j} |\overline{R_i} \cap \overline{R_j}|\big)^{\frac 1 4}.
%  & \leq C \|\overline{N}_{\mathcal R}\|_{L_{1}}^{\frac 2 q - \frac 1 2}} \|\overline{N}_{\mathcal R}\|_{L_{2}}^{1 - \frac 2 q}},
\end{align*}
%% But
%% \[  = N c \delta^{d-1}  \frac{\sum_{i,j} |\overline{R_i} \cap \overline{R_j}|}{\sum_i |R_i|}.\]
%% Therefore, we obtain
%% \[ \Big\| (\sum_{j=1}^N |g_j|^2)^{\frac 1 2}\Big\|_4 \leq .\]
To conclude the proof, it remains to observe that $\delta^{-\frac{d-1}{2}} N^{\frac 1 2} = \frac{\sqrt{c} N}{(\sum_i |R_i|)^{\frac 1 2}}$, so
\[ \delta^{-\frac{d-1}{2}} N^{\frac 1 2} \big(\sum_{i,j} |\overline{R_i} \cap \overline{R_j}|\big)^{\frac 1 4} |\bigcup_{j=1}^N R_j|^{\frac 1 4} \leq \sqrt{c} N \varepsilon^{\frac 1 4}.\qedhere\]
\end{proof}

\section{Schur multipliers on the sphere}\label{sec:Schur}
Theorem~\ref{thm:Spbddness_LP_quantitative} will easily follow from the following lemma.
\begin{lem}\label{lem:Spbddness_quantitative} There is $h \in C^\infty_c(\R)\setminus\{0\}$ and a constant $C_d$ such that, for every bounded measurable function $m:(-1,1)\to \C$ such that the Schur multiplier with symbol $(\xi,\eta) \in \sphere^d \times \sphere^d \mapsto m(\langle \xi,\eta\rangle)$ is $S_p$-bounded with norm $1$ and every $r \in \R^*$ and $\delta \in (0,1)$,
  \[ \big| \int_\R m(\cos(\theta+t/r)) \widehat h(t) dt\big| \leq C_d \big( \fonction_d(\delta)^{\big|\frac 1 p - \frac 1 2\big|} + \frac{1}{|r \delta^2 \sin \theta|^{\frac 1 3} }\big).\]
\end{lem}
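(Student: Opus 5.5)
The plan is to transfer the Euclidean argument of Lemma~\ref{lem:Lpbddness_quantitative} to the sphere. The Fourier multipliers $T_j$ with symbols $m(|u_j+\frac{\cdot}{r}|)$ are replaced by Schur multipliers obtained from $m(\langle\cdot,\cdot\rangle)$ by restriction (Lemma~\ref{lem:restriction}) to small rescaled pieces of $\sphere^d$ around pairs of points at angle $\theta$. As before, normalize the norm to $1$, assume $p<2$ by duality, and assume $|r|$ large (otherwise the bound follows from $\|m\|_\infty\|\widehat h\|_1$). Take $h=f\ast g^*$ with $f,g\in C^\infty_c$ supported in $(-1,0)$ and $(1,2)$.

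\textbf{Step 1 (local model).} Fix a $\delta$-tube family $\{R_j\}$ as in \eqref{eq:gooddeltatubes}. For each $j$, choose points $a_j,b_j\in\sphere^d$ with $\langle a_j,b_j\rangle=\cos\theta$, the geodesic from $a_j$ to $b_j$ leaving $a_j$ in a direction determined by $\vecteur_j$. Define maps $\Phi_j,\Psi_j\colon\R^d\to\sphere^d$ by $\Phi(x)=\exp_{a}(x/r)$ and $\Psi(y)=\exp_{b}(y/r)$ in parallel-transported frames. The goal is to arrange them so that, for $x,y$ in a fixed tube at scale $1$,
\[ \angle(\Phi_j(x),\Psi_j(y)) = \theta+\frac{\langle y-x,\vecteur_j\rangle}{r}+E_j(x,y),\]
with the transverse error $E_j$ of size $O\big(\frac{|P_j(x-y)|^2}{r^2\sin\theta}+\frac{|x|^2+|y|^2}{r^2}\big)$.

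\textbf{Step 2 (square function).} Glue the restrictions along a single measure space, with the $j$-th pieces placed on disjoint copies or with $\delta$-separated directions, so that Lemma~\ref{lem:restriction} applies to all $j$ simultaneously. The $S_p$-boundedness of the Schur multiplier $m(\langle\cdot,\cdot\rangle)$ then gives a square-function inequality in place of Lemma~\ref{lem:square_function_inequality}. Concretely, for the operators with kernels $A_j(x,y)$ built from $f_j\otimes g_j$, one gets
\[\Big|\sum_j \operatorname{Tr}\big(S_{m_j}(A_j)B_j^*\big)\Big|\le \Big\|\big(\textstyle\sum_j|f_j|^2\big)^{1/2}\Big\|_p\,\Big\|\big(\textstyle\sum_j|g_j|^2\big)^{1/2}\Big\|_q .\]
This is where the Schur version of Meyer's lemma (the square inequality for Schur multipliers, obtained from complete boundedness on $S_p$) replaces \eqref{eq:l2extension}. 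The role of the Fourier-side supports is played by the usual identification of Toeplitz Schur multipliers $\phi(x-y)$ on $L_2(\R^d)$ with Fourier multipliers, as in \cite{MR2866074,CaspersdlS,MR4882285}. Lemma~\ref{lem:consequence_Kakeya} is then used verbatim to bound the right-hand side by $N\fonction_d(\delta)^{\frac1p-\frac12}$.

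\textbf{Step 3 (main term and error; expected main obstacle).} One has to show that each term $\operatorname{Tr}(S_{m_j}(A_j)B_j^*)$ equals $\int m(\cos(\theta+t/r))\widehat h(t)\,dt$ up to an error of order $|r\delta^2\sin\theta|^{-1/3}$. For this I would introduce the Fourier-side localization that was automatic in Lemma~\ref{lem:technical}: write the symbol as $m\circ\cos$ of the angle and apply a change of variables analogous to Lemma~\ref{lem:change_of_variable}, so that the regularity of $\widehat h$ absorbs the perturbation $E_j$. Since we work on the spatial side with kernels rather than Fourier transforms, the tube width $\delta$ must be rescaled to a width $\delta'$ at which the transverse curvature error $\delta'^2/(r\sin\theta)$ is small. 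That curvature error then has to be balanced against the truncation and the loss in the change of variables. I expect optimizing these three competing errors to give $(r\delta^2\sin\theta)^{-1/3}$ rather than the Euclidean $(r\delta^2)^{-1}$. This bookkeeping, and keeping the constants uniform in $\theta$ near $0$ and $\pi$, is the step I expect to be hardest; I would first try the cubic balancing $\eta=(r\delta^2\sin\theta)^{-1/3}$ for the cutoff scale. Summing over $j$ and dividing by $N$ then yields the claimed bound.
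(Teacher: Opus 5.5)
Your overall architecture matches the paper's: restriction to $\R^d\times\{1,\dots,N\}$ via Lemma~\ref{lem:restriction}, a transferred square-function inequality, Lemma~\ref{lem:consequence_Kakeya}, a change of variables, and averaging over $j$. However, the step that produces the estimate is missing, and Steps 1--2 are set up on the wrong side. In the Schur--Fourier transference the variables of the Schur symbol are \emph{frequency} variables. The operator with Toeplitz kernel $\widehat{f_j}(x-y)$ is, after Fourier transform, multiplication by $f_j$, so it is not in $S_p$. The missing idea is to localize at an intermediate scale $R$ with an $L_2$-normalized radial bump $\varphi_R$ supported in $B(0,R)$: set $A_j=(\varphi_R(x)^{1/p}\widehat{f_j}(x-y)\varphi_R(y)^{1/p})$, and define $B_j$ likewise with $q$ and $g_j$. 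The map $f\mapsto A$ is a complete contraction $L_p(\R^d)\to S_p(L_2(\R^d))$ (interpolate between $p=\infty$ and $p=1$). This is exactly what yields $\|\sum_j A_j\otimes e_{1,j}\|_{S_p}\le\|(\sum_j|f_j|^2)^{1/2}\|_p$. Your Step 2 never says how $A_j,B_j$ arise from $f_j,g_j$. Kernels ``built from $f_j\otimes g_j$'' would be essentially rank one, with $S_p$-norms given by $L_2$ quantities, and there would be no Kakeya gain. Likewise, your maps $\Phi_j,\Psi_j$ must be controlled for $|x|,|y|\le R$ in frequency, not for $x,y$ in a tube at scale $1$.

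This localization is also where the exponent $\frac13$ comes from; your Step 3 only reads it off the target. The width $\delta$ is fixed by the tube family and is never rescaled; the free parameter is $R$. After rotating by $U_j$,
$\Tr(S_j(A_j)B_j^*)=\iint\widetilde m_r(x,y)\varphi_R(x)\varphi_R(y)\,\delta^{d-1}\widehat h(x_d-y_d)|\widehat\rho|^2(\delta(x'-y'))\,dx\,dy$.
This kernel is spread over $|x'-y'|\lesssim 1/\delta$, so replacing $\varphi_R(x)\varphi_R(y)$ by $\varphi_R(y)^2$ and truncating costs $\frac{1}{\delta R}$. On the ball of radius $R$, the estimates $|\psi_r(x,y)-(x_d-y_d)|\lesssim\frac{|x|^2+|y|^2}{r\sin\theta}$ and $|\partial_{x_d}\psi_r-1|\lesssim\frac{|x|+|y|}{r\sin\theta}$ justify the change of variables $x_d-y_d\mapsto t$. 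In that change of variables, $\widehat h'$ absorbs a displacement of size $\frac{R^2}{r\sin\theta}$. The choice $R=(r\sin\theta/\delta)^{1/3}$ makes both errors equal to $(r\delta^2\sin\theta)^{-1/3}$. Incidentally, the refined error you claim in Step 1 is not uniform as $\theta\to\pi$: with parallel-transported frames, the second-order coefficient of $|P_jx|^2+|P_jy|^2$ is of order $\tan(\theta/2)/r^2$. The cruder but uniform bound $\frac{|x|^2+|y|^2}{r\sin\theta}$ (after multiplying the angle by $r$) is all that is needed.
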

\begin{proof}[Proof of Theorem~\ref{thm:Spbddness_LP_quantitative} using Lemma~\ref{lem:Spbddness_quantitative}] For $n\geq 1$, the first inequality is Proposition~\ref{prop:from_one_h_to_all} and Example~\ref{prop:from_one_h_to_all}, while the second follows from Lemma~\ref{lem:infi_for_particular_fd} and Keich's estimates \eqref{eq:Keich}. For $n=0$, the theorem is clear because the left-hand side is $\lesssim \|m\|_{L_\infty}$ and the right-hand side is larger.
\end{proof}

\subsection{Proof of Lemma~\ref{lem:Spbddness_quantitative}}
The idea is to adapt the arguments in the previous section to spherical geometry and Schur multipliers. To stick back directly to the Euclidean setting we will use the stereographic projection. An important computation is to analyse how two stereographic projections through two distinct poles are distorted (Lemma~\ref{eq:Taylor_expansion_in_r}). To relate Fourier multipliers with Euclidean analysis, we will also exploit the ideas of Schur-Fourier transference \cite{MR2866074,CaspersdlS} (Lemma~\ref{lem:square_inequality_Schur}) together with the amenability of $\R^d$. Then in Lemma~\ref{lem:change_of_variable_Schur} we prove a replacement of the change of variables argument from Lemma~\ref{lem:technical}.

There are three small scales at which we will work. The first is the scale $1/r$, that we think as the smallest, for the parameter $r$ that is in the statement of Lemma~\ref{lem:Spbddness_quantitative}. The third is the scale $\delta$, that we think of much larger than $1/r$. The second (intermediate) scale will be a scale $1/R$, for a free parameter $R$. There is a tension between choosing $R$ large so that the balls of radius $R$ in $\R^d$ are good F\o{}lner sequences and therefore Schur multipliers at the scale $1/R$ behave as Fourier multipliers, and choosing $R$ small compared to $r$ so that the stereographic distortion from Lemma~\ref{eq:Taylor_expansion_in_r} is negligeable. We will see that the best choice will be $R=(r \sin \theta/\delta)^{\frac 1 3}$.

Let $\rho \in C^\infty_c(\R^{d-1})$ such that $\|\rho\|_{L_2} = 1$ and $f,g \in C^\infty_c(\R)$ nonzero supported in $(-1,0)$ and $(1,2)$ respectively. We will prove Lemma~\ref{lem:Spbddness_quantitative} with $h = f \ast g^*$, where $g(t) = \overline{g(-t)}$.

Let us fix $d,p,m$ as in Lemma~\ref{lem:Spbddness_quantitative}. By duality we can assume that $p<2$. The lemma is obvious for $|r|\leq 1$, so we can assume $r\geq 1$. By symmetries we can also assume $\theta \in (0,\pi)$. For simplicity we also assume $r>1$; the case $r<-1$ is identical, replacing all bounds $\frac{1}{r \sin\theta}$ by $\frac{1}{|r \sin\theta|}$.

Consider the function $\widetilde{m_r}:\R^d\times \R^d \to \C$ defined by
\begin{equation}\label{eq:defmtilde} \widetilde{m_r}(x,y) = m\Big(\Big\langle \frac{e_{d+1}+ x/r}{|e_{d+1}+ x/r|},V_\theta \frac{e_{d+1}- y/r}{|e_{d+1}- y/r|}\Big\rangle \Big),
\end{equation}
where $V_\theta \in O(d+1)$ is the reflection matrix
\begin{equation} V_\theta=\begin{pmatrix} 1_{d-1} & 0 & 0\\ 0& -\cos \theta & \sin \theta \\ 0 & \sin \theta & \cos \theta 
  \end{pmatrix}.
\end{equation}

We have the following:
\newcommand\function{\psi_r}
\begin{lem}\label{eq:Taylor_expansion_in_r} The function $\function:\R^d \times\R^d\to \R$ given by
  \[\function(x,y) = r\Big(\arccos\Big\langle \frac{e_{d+1}+x/r}{|e_{d+1}+ x/r|},V_\theta \frac{e_{d+1}-y/r}{|e_{d+1}-y/r|}\Big\rangle-\theta\Big)\] is smooth and satisfies
  \[|\function(x,y) - (x_d-y_d)| \lesssim \frac{|x|^2+|y|^2}{r \sin \theta},\]
  \[\Big|\frac{\partial \function}{\partial{x_d}}(x,y) - 1\Big| \lesssim \frac{|x|+|y|}{r \sin \theta}.\]
  The implicit constants do not depend on $r$ and $\theta$.
\end{lem}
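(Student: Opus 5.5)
The plan is to reduce everything to an explicit formula for the inner product $c(x,y):=\langle a,b\rangle$, where
\[a=\frac{e_{d+1}+x/r}{|e_{d+1}+x/r|},\qquad b=V_\theta\frac{e_{d+1}-y/r}{|e_{d+1}-y/r|}.\]
We then expand $\arccos$ around $\cos\theta$, treating separately a ``small'' regime $|x|+|y|\leq \kappa\, r\sin\theta$ (for a small absolute constant $\kappa$) and a ``large'' regime where a crude Lipschitz bound suffices. Throughout, $x,y\in\R^d$ are viewed in $\R^{d+1}$ with last coordinate $0$.

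\emph{Expansion of $c$.} We have $V_\theta e_{d+1}=\sin\theta\, e_d+\cos\theta\, e_{d+1}$, and $V_\theta$ is a symmetric orthogonal matrix. Hence
\[\Big\langle e_{d+1}+\tfrac xr, V_\theta\big(e_{d+1}-\tfrac yr\big)\Big\rangle=\cos\theta+\frac{\sin\theta\,(x_d-y_d)}{r}-\frac{\langle x,V_\theta y\rangle}{r^2}.\]
The denominators are $|e_{d+1}\pm x/r|=\sqrt{1+|x|^2/r^2}$ (and similarly for $y$). Therefore, when $|x|,|y|\leq r$,
\[c=\cos\theta+\frac{\sin\theta\,(x_d-y_d)}{r}+O\Big(\frac{|x|^2+|y|^2}{r^2}\Big),\]
and the analogous expansion holds for $\partial_{x_d}c=\frac{\sin\theta}{r}+O\big(\frac{|x|+|y|}{r^2}\big)$. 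Smoothness of $\psi_r$ holds wherever $c\neq\pm1$, i.e.\ $a\neq\pm b$. This is automatic in the small regime, where $|c-\cos\theta|\leq \sin^2\theta/2$; in general I would interpret the smoothness claim away from that set, where $\psi_r$ is still Lipschitz.

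\emph{Small regime.} Write $\varepsilon=c-\cos\theta$, so that $|\varepsilon|\lesssim \kappa\sin^2\theta$. Taylor's formula with $\arccos'(s)=-(1-s^2)^{-1/2}$ and $|\arccos''(s)|\lesssim (1-s^2)^{-3/2}$ on $|s-\cos\theta|\leq\sin^2\theta/2$ gives
\[\arccos(\cos\theta+\varepsilon)=\theta-\frac{\varepsilon}{\sin\theta}+O\Big(\frac{\varepsilon^2}{\sin^3\theta}\Big).\]
Note that $1-s^2\gtrsim\sin^2\theta$ on that interval. Multiplying by $r$, the term $-r\varepsilon/\sin\theta$ is $-(x_d-y_d)+O\big(\frac{|x|^2+|y|^2}{r\sin\theta}\big)$. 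The quadratic remainder is $r\varepsilon^2/\sin^3\theta\lesssim \frac{(|x|+|y|)^2}{r\sin\theta}$, using $\varepsilon\lesssim (|x|+|y|)\sin\theta/r+(|x|^2+|y|^2)/r^2$ and $|x|+|y|\leq r\sin\theta$. Here I must track the sign convention: the minus sign in $e_{d+1}-y/r$ together with the reflection is what makes the linear term come out as $x_d-y_d$ up to the global sign; I would fix that sign by direct computation.

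For the derivative, I would write $\partial_{x_d}\psi_r=-r(1-c^2)^{-1/2}\partial_{x_d}c$. Then I would use $(1-c^2)^{-1/2}=\frac1{\sin\theta}\big(1+O(|\varepsilon|/\sin^2\theta)\big)$ and the expansion of $\partial_{x_d}c$ above. This gives $|\partial_{x_d}\psi_r-1|\lesssim \frac{|x|+|y|}{r\sin\theta}$.

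\emph{Large regime.} When $|x|+|y|\geq\kappa r\sin\theta$, the right-hand sides satisfy $\frac{|x|^2+|y|^2}{r\sin\theta}\gtrsim |x|+|y|$ and $\frac{|x|+|y|}{r\sin\theta}\gtrsim1$. So it suffices to show $|\psi_r|\lesssim|x|+|y|$ and $|\partial_{x_d}\psi_r|\lesssim 1$. These follow from two facts. First, the spherical distance $\arccos\langle a,b\rangle$ is $1$-Lipschitz in each of $a,b$ for the geodesic metric. Second, $x\mapsto a$ is $(1/r)$-Lipschitz from $\R^d$ to $\sphere^d$ (it is the map $x\mapsto$ direction of $e_{d+1}+x/r$, whose differential has norm $\leq 1/r$); the same holds for $y\mapsto b$. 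At $x=y=0$ the angle equals $\theta$, so $|\psi_r(x,y)|\leq |x|+|y|$ and $|\partial_{x_d}\psi_r|\leq 1$ wherever it is differentiable.

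The main obstacle I expect is the bookkeeping in the small regime: the constants must be uniform in $\theta$, which forces one to check that $1-c^2$ stays comparable to $\sin^2\theta$. This is exactly why $\kappa$ must be chosen small and why the threshold between the two regimes is $r\sin\theta$ rather than $r$.
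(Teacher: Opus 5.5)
Your argument is correct in substance but takes a different route from the paper, and two points need fixing: the sign in your conclusion, and one constant in the small regime.

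\textbf{Comparison with the paper.} The paper never expands $\arccos$. It works with the angle function $\beta(X,Y)=\arccos\langle X/|X|,Y/|Y|\rangle$ on $\R^{d+1}\times\R^{d+1}$. From $\nabla_X\beta=-\frac{1}{|X|}\frac{P_{X^\perp}Y}{|P_{X^\perp}Y|}$ it gets a Lipschitz bound $|\nabla_X\beta(X,Y)-\nabla_X\beta(X_0,Y_0)|\lesssim (|X-X_0|+|Y-Y_0|)/|P_{X_0^\perp}Y_0|$ for $|X|,|Y|\geq 1$. This bound is trivial in your large regime, since these gradients have norm $\leq 1$. Specializing to $X_0=e_{d+1}$, $Y_0=V_\theta e_{d+1}$, where $|P_{X_0^\perp}Y_0|=\sin\theta$, gives the derivative bound. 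The paper then gets the $y$-gradient from the symmetry $\psi_r(x,y)=\psi_r(-y,-x)$ and integrates along $[0,(x,y)]$. That route is shorter and coordinate-free, and one first-order estimate yields both inequalities.

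Your explicit cosine formula with a second-order Taylor expansion of $\arccos$ is more elementary. It gives the zeroth-order bound directly, without the symmetry trick, and makes the uniformity in $\theta$ visible. Your large-regime argument, using the $1$-Lipschitz geodesic distance and the $1/r$-Lipschitz maps $x\mapsto a$, $y\mapsto b$, is clean. Your caveat on smoothness is also justified. The case $a=\pm b$ does occur: for $\theta<\pi/2$, taking $y=0$ and $x=r\tan\theta\, e_d$ gives $a=b$. So $\psi_r$ is smooth only off that set. This is harmless, because Lemma~\ref{lem:change_of_variable_Schur} only uses the region $|x|+|y|\lesssim r\sin\theta$.

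\textbf{The sign.} Your own computation gives $\psi_r=-(x_d-y_d)+O(\cdot)$ and $\partial_{x_d}\psi_r=-1+O(\cdot)$, and this is correct. For instance, $\psi_r(te_d,0)=-r\arctan(t/r)$ for small $|t|$. So the lemma as printed has a sign error. No ``direct computation'' will repair it, and your closing claim $|\partial_{x_d}\psi_r-1|\lesssim\frac{|x|+|y|}{r\sin\theta}$ contradicts your own chain-rule formula. The paper's proof has the matching slip: its gradient formula gives $\nabla_X\beta(e_{d+1},V_\theta e_{d+1})=-e_d$, not $e_d$. You should state and prove the corrected bounds on $|\psi_r+(x_d-y_d)|$ and $|\partial_{x_d}\psi_r+1|$. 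Downstream this only turns $|t-s|$ into $|t+s|$ in Lemma~\ref{lem:change_of_variable_Schur}, which amounts to replacing $r$ by $-r$. That is harmless, since Lemma~\ref{lem:Spbddness_quantitative} is stated for all $r\in\R^*$.

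\textbf{The constant.} The claim $1-s^2\gtrsim\sin^2\theta$ on the window $|s-\cos\theta|\leq\sin^2\theta/2$ is not uniform in $\theta$. At $s=\cos\theta+\sin^2\theta/2$ one has $1-s=(1-\cos\theta)^2/2\sim\theta^4/8$ as $\theta\to 0$. Shrink the window to $\sin^2\theta/4$, which your small $\kappa$ allows. On that window $1\mp s\geq(1\mp\cos\theta)/2$, hence $1-s^2\geq\sin^2\theta/4$, and the rest of your small-regime bookkeeping goes through unchanged.
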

\begin{proof} Consider the function on $(\R^{d+1}\setminus\{0\}) \times (\R^{d+1}\setminus\{0\})$
  \[ \beta(X,Y) = \arccos \Big\langle \frac{X}{|X|},\frac{Y}{|Y|}\Big\rangle.\]
    A small computation shows that $\beta$ is smooth outside of $\{(X,Y) \mid \R X=\R Y\}$ and that
    \begin{align*} \nabla_X\beta(X,Y) &= - \frac{1}{|X|} \frac{P_{X^\perp}(Y)}{|P_{X^\perp}(Y)|}.
    \end{align*}
    %% Indeed, assuming without loss of generality that $|X|=|Y|=1$, we have $\frac{X+H}{|X+H|} = P_{X^\perp}(H)+O(|H|^2)$ and
    %% \[ \langle \frac{X}{|X+H|},Y\rangle = \langle X,Y\rangle + \langle H,P_{X^\perp}(Y)\rangle.\]
    %% Using that $\arccos'(\langle X,Y\rangle) = \frac{-1}{\sqrt{1-\langle X,Y\rangle^2}} = \frac{-1}{|P_{X^\perp}(Y)|}$, we obtain
    %% \begin{align*} \beta(X+H,Y) &= \arccos(\langle X,Y\rangle + \langle H, P_{X^\perp}Y\rangle + O(|H|^2))\\
    %%   & = \beta(X,Y) -\frac{1}{|P_{X^\perp}(Y)|}\langle H,P_{X^\perp}(Y)\rangle + O(|H|^2).
    %% \end{align*}
    Here $P_{X^\perp}$ denotes the orthogonal projection on the orthogonal subspace $X^\perp \subset \R^{d+1}$, and $\nabla_X$ is the gradient in the first $d+1$ (out of $2d+2$) variables. From this formula, we see that for any pair $X_0,Y_0 \in \R^{d+1}$  of unit vectors and any pair $X,Y \in \R^{d+1}$ of vectors of norm $\geq 1$
    \[ |\nabla_X \beta(X,Y) - \nabla_X \beta(X_0,Y_0)| \lesssim \frac{|X-X_0| + |Y-Y_0|}{|P_{X_0^\perp}(Y_0)|}.\]
    If we specify to $X_0 = e_{d+1},Y_0=V_\theta e_{d+1}$, we have $\beta(X_0,Y_0) = \theta$, $|P_{X_0^\perp}(Y_0)|=\sin \theta$, $\nabla_X \beta(X_0,Y_0) = e_d$ and we deduce
    \begin{equation}\label{eq:estimate_gradbeta} |\nabla_X\beta (X,Y) - e_d|\lesssim \frac{|X-X_0| + |Y-Y_0|}{\sin \theta}.
    \end{equation}
By our definition, we have $\function(x,y) = r (\beta(e_{d+1}+x/r,V_\theta(e_{d+1}-y/r)) - \beta(X_0,Y_0))$, so by \eqref{eq:estimate_gradbeta} we obtain
    \[ | \nabla_x \function(x,y) - e_d| \lesssim \frac{|x| + |y|}{r\sin \theta}.\]
This in particular implies the second inequality in the lemma. For the first, use that $V_\theta=V_\theta^*$ to deduce that $\function(x,y)=\function(-y,-x)$ and 
\[ | \nabla_y \function(x,y) + e_d| \lesssim \frac{|x| + |y|}{r\sin \theta}.\]
Therefore, the first inequality follows by integrating $\function$ on the interval $[0,(x,y)]$.
  \end{proof}

As in \S~\ref{sec:quantitative}, let $\{R_j\mid 1 \leq j \leq N\}$ be a family of $\delta$-tubes in $\R^d$ such that the $\overline{R_j}$ are pairwise disjoint and that satisfy \eqref{eq:gooddeltatubes} for some $\varepsilon>0$, let $\vecteur_j=\vecteur(R_j)$ and define the functions $f_j,g_j$ as in \eqref{eq:def_of_fj} and \eqref{eq:def_of_gj}. Choose also $U_j \in O(d)$ such that $U_j \vecteur_j =e_d$, and define $S_j$ the Schur multiplier with symbol $(x,y)\mapsto \widetilde{m}_r(U_j x,U_j y)$. 

Fix a radial function $\varphi_1 \in \C^\infty_c(\R^d)$ with nonnegative values, with $L_2(\R^d)$-norm $1$ and supported in the ball of radius $1$. For $R>0$, set $\varphi_R(\xi)=\varphi_1(\xi/R)R^{-\frac d 2}$. Then $\varphi_R$ is radial, is supported in the ball of radius $R$, has $L_2$-norm $1$. 
Define an operator $A_j \in B(L_2(\R^d))$ by its kernel
\[ A_j = (\varphi_R(x)^{\frac 1 p} \widehat{f_j}(x-y) \varphi_R(y)^{\frac 1 p})_{x,y \in \R^d}\]
and similarly
\[ B_j = (\varphi_R(x)^{\frac 1 q} \widehat{g_j}(x-y) \varphi_R(y)^{\frac 1 q})_{x,y \in \R^d}.\]
\begin{lem}\label{lem:square_inequality_Schur} If $f_1,\dots,f_N \in L_p(\R^d)$ and $g_1,\dots,g_N \in L_p(\R^d)$ are defined as above,
  \begin{align*} |\sum_j \Tr( S_j(A_j) B_j^*)| &\leq \|(\sum_j |{f}_j|^2)^{\frac 1 2}\|_{L_p} \|(\sum_j |{g}_j|^2)^{\frac 1 2}\|_{L_q} \lesssim N \varepsilon^{\frac 1 p - \frac 1 2}.
  \end{align*}
\end{lem}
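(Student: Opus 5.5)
The second inequality is already available: it is exactly Lemma~\ref{lem:consequence_Kakeya}, which uses the tubes $R_j$ satisfying \eqref{eq:gooddeltatubes}. So only the first inequality needs proof. It is a Schur-multiplier analogue of the chain Cauchy--Schwarz / H\"older / Lemma~\ref{lem:square_function_inequality}. I will realize $A_j,B_j$ as compressions of Fourier multiplier operators and use Schur--Fourier transference.

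\textbf{Step 1: write $A_j,B_j$ as compressions.} Let $\lambda$ be the (non-normalized) Fourier transform conjugation. Then $\widehat{f_j}(x-y)$ is the kernel of the operator $\mathcal F M_{f_j}\mathcal F^{-1}$, with $M_{f_j}$ pointwise multiplication. Hence
\[A_j = D_R^{1/p}\, \widehat{M}_{f_j}\, D_R^{1/p}, \qquad B_j = D_R^{1/q}\, \widehat{M}_{g_j}\, D_R^{1/q},\]
where $D_R$ is multiplication by $\varphi_R$ and $\widehat{M}_{f_j}$ is the convolution-by-$\widehat{f_j}$ operator. Put $a=\sum_j e_{j}\otimes A_j$ and $b=\sum_j e_j\otimes B_j$ as columns in $S_p(\ell_2\otimes L_2)$, respectively $S_q(\ell_2\otimes L_2)$. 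Then
\[\sum_j \Tr(S_j(A_j)B_j^*)=\Tr\big(\mathcal S(a)\, b^*\big),\]
where $\mathcal S=\oplus_j S_j$ is the block-diagonal Schur multiplier on $\ell_2^N\otimes L_2(\R^d)$ with symbol $(j,x;k,y)\mapsto \delta_{jk}\widetilde{m}_r(U_jx,U_jy)$. By H\"older in $S_p$,
\[\Big|\sum_j \Tr(S_j(A_j)B_j^*)\Big|\le \|\mathcal S\|\,\|a\|_{S_p}\|b\|_{S_q}.\]

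\textbf{Step 2: bound $\|\mathcal S\|\le1$.} The symbol of $\mathcal S$ is a restriction of $m(\langle\cdot,\cdot\rangle)$ on $\sphere^d$. The map $x\mapsto (e_{d+1}+U_j^{-1}\cdots)$ is built from projections and rotations; indeed $\widetilde m_r(U_jx,U_jy)$ corresponds to points $O_j\xi, O_jV_\theta\eta$ with $O_j\in O(d+1)$ fixing $e_{d+1}$. I will index points by $(j,x)\in\{1,\dots,N\}\times\R^d$ and map them by $f(j,x)=O_j\frac{e_{d+1}+x/r}{|\cdot|}$ and $g(j,y)=O_jV_\theta\frac{e_{d+1}-y/r}{|\cdot|}$. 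The images of Lebesgue measure are absolutely continuous on $\sphere^d$. Off-diagonal blocks $j\neq k$ also appear in this restriction, but a diagonal compression of a Schur multiplier is contractive, since it is an average of unitary conjugations by $\ell_\infty^N$ signs. So Lemma~\ref{lem:restriction}(2) gives norm $\le1$. The complete boundedness needed for the $\ell_2^N$ amplification follows the same way, because $S_p$-bounded Schur multipliers are automatically completely bounded. If that is unavailable, I would use the amplification trick directly, since restriction already handles the index set.

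\textbf{Step 3: compute $\|a\|_{S_p}$.} This is the main point. We have $\|a\|_{S_p}=\|(\sum_j A_j^*A_j)^{1/2}\|_p$, and this is also $\le\|\,D_R^{1/p} X D_R^{1/p}\|$ with $X$ the column of the $\widehat M_{f_j}$, which are unitarily equivalent to the multiplications $M_{f_j}$. Under $\mathcal F^{-1}$, $D_R$ becomes convolution by a positive-definite function. The right tool is the Schur--Fourier transference from \cite{MR2866074,CaspersdlS}: for $\varphi_R$ with $\|\varphi_R\|_2=1$, the map $M_F\mapsto \varphi_R^{1/p}\widehat{M_F}\varphi_R^{1/p}$ from $L_p(\R^d;\ell_2^{c})$ to $S_p$ is contractive. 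This holds at $p=\infty$ trivially and at $p=1$ because $\Tr$ gives $\|\varphi_R\|_2^2=1$; complex interpolation with column-valued functions finishes. The resulting bound is
\[\|a\|_{S_p}\le\big\|(\sum_j|f_j|^2)^{1/2}\big\|_p,\]
and similarly $\|b\|_{S_q}\le\|(\sum_j|g_j|^2)^{1/2}\|_q$. I expect the interpolation bookkeeping for the row/column square functions in the noncommutative $L_p$ to be the delicate part. In particular one must check that column versus row structure matches $(\sum|f_j|^2)^{1/2}$; this holds because the $M_{f_j}$ commute, so row and column norms coincide.
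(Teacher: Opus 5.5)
Your overall architecture matches the paper's. You pack the $A_j$ and $B_j$ into a single row or column, so that complete contractivity of $f\mapsto A_f$ (your Step~3, which is the paper's transference argument) produces the square functions. You then control the packed Schur multiplier by Lemma~\ref{lem:restriction} and finish with H\"older.

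However, Steps~1 and~2 are inconsistent with each other. A block-diagonal multiplier with symbol $\delta_{jk}\widetilde m_r(U_jx,U_jy)$, applied to the column $a=\sum_j e_{j,1}\otimes A_j$, annihilates every block except $(1,1)$. So $\Tr(\mathcal S(a)b^*)=\Tr(S_1(A_1)B_1^*)$, not $\sum_j\Tr(S_j(A_j)B_j^*)$. Your diagonal-compression argument only controls $\mathcal S$ on block-diagonal operators, which is of no use for a column. If instead you put the $A_j$ on the diagonal, the trace identity holds, but $\|\diag(A_1,\dots,A_N)\|_p=(\sum_j\|A_j\|_p^p)^{1/p}$. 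The resulting bound is then $\approx N$ with no factor $\varepsilon^{\frac1p-\frac12}$, so exactly the Kakeya gain is lost.

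The missing observation is where the index $j$ enters the symbol. Write $W_j=\diag(U_j,1)\in O(d+1)$, and $\xi_x,\eta_y\in\sphere^d$ for the two normalized vectors in \eqref{eq:defmtilde}. Then $\widetilde m_r(U_jx,U_jy)=m(\langle W_j\xi_x,V_\theta W_j\eta_y\rangle)=m(\langle \xi_x,W_j^*V_\theta W_j\eta_y\rangle)$. (Your points $O_j\xi$, $O_jV_\theta\eta$ are in the wrong order: they give $\langle\xi,V_\theta\eta\rangle$, which is independent of $j$.)

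Hence the symbol $((x,i),(y,j))\mapsto\widetilde m_r(U_jx,U_jy)$ on $(\R^d\times\{1,\dots,N\})^2$ depends on the column index only and has no $\delta_{ij}$. It is already of the form $m(\langle F(x,i),G(y,j)\rangle)$, with $F(x,i)=\xi_x$, $G(y,j)=W_j^*V_\theta W_j\eta_y$ and absolutely continuous push-forwards. So it is contractive on $S_p$ by Lemma~\ref{lem:restriction}(2), and no compression is needed.

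On the row $A=\sum_j e_{1,j}\otimes A_j$ this multiplier acts blockwise by the $S_j$, which gives $\sum_j\Tr(S_j(A_j)B_j^*)=\Tr(S(A)B^*)$. H\"older plus transference then finish; this is the paper's proof. Your column version works equally well with the transposed choice $F(x,i)=W_i^*V_\theta W_i\xi_x$, $G(y,j)=\eta_y$ (using $V_\theta=V_\theta^*$).
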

\begin{proof} The second inequality is Lemma~\ref{lem:consequence_Kakeya}, so we only have to prove the first. Write $A = \sum_j A_j \otimes e_{1,j} \in B(L_2(\R^d))\otimes M_N(\C) \simeq B(L_2((\R^d)^N))$ and similarly for $B$. The map $f_j\mapsto A_j$ extends to a completely contractive map $L_p(\R^d)\to S_p(L_2(\R^d))$. This is for example proved in \cite[Theorem 5.1]{CaspersdlS} in the case where $\varphi_R$ a normalized indicator function, but the proof is identical.\footnote{We recall the proof. By interpolation, it is enough to treat the case $p=\infty$ and $p=1$. The case $p=\infty$ is obvious, because by Fourier transform the operator with kernel $\widehat{f_j}(x-y)$ corresponds to the operator of multiplication by $f_j$ on $L_2(\R^d)$. The case $p=1$ is easy: if $F \in L_1(\R^d;S_1)$, it can be written as $g^* h$ for $g,h\in L_2(\R^d;S_2)$ satisfying $\|g\|_2\|h\|_2 = \|f\|_1$. Then $A_j$ factors as $Y^* X$ when $X=(\widehat{h}(x-y) \varphi_R(y))_{x,y \in \R^d}$ and similarly for $Y$ and $g$. By Fubini and Plancherel, we can compute $\|X\|_{S_2} = \|h\|_2 \|\varphi_R\|_2 = \|h\|_2$ and $\|Y\|_{S_2} = \|g\|_2$, so $\|A_j\|_{S_1} \leq \|X\|_2 \|Y\|_2 = \|f_j\|_1$.} In particular we have
\begin{equation}\label{eq:square_pinequality_transference} \|A\|_p \leq \| \sum_j f_j \otimes e_{1,j}\|_{L_p(\R^d;S_p^N)} =  \|(\sum_j |{f}_j|^2)^{\frac 1 2}\|_{L_p}
\end{equation}
and similarly
\begin{equation}\label{eq:square_qinequality_transference} \|B\|_q\leq \|(\sum_j |{g}_j|^2)^{\frac 1 2}\|_{L_q}.
\end{equation}

The linear map $X = (X_{i,j})_{i,j \leq N}\mapsto (S_j(X_{i,j}))$ is a Schur multiplier with symbol $S:(x,i),(y,j)\mapsto \widetilde{m}_r(U_j x,U_jy)$. By the definition of $\widetilde{m}_r$, we can rewrite
  \[ S((x,i),(y,j)) = m\Big(\Big\langle \frac{e_{d+1}+x/r}{|e_{d+1}+x/r|},\begin{pmatrix} U_j^*&0\\0&1
  \end{pmatrix}  V_\theta \begin{pmatrix} U_j&0\\0&1
  \end{pmatrix}
  \frac{e_{d+1}-y/r}{|e_{d+1}-y/r|}\Big\rangle \Big).\]
  From this expression, we see from Lemma~\ref{lem:restriction} that $S$ is bounded on $S_p$ with norm $\leq 1$ (recall that we have assumed that $(\xi,\eta) \in \sphere^d \times \sphere^d \mapsto m(\langle \xi,\eta\rangle)$ is $S_p$-bounded with norm $1$). As a consequence, we see that
  \begin{align*} |\sum_j \Tr( S_j(A_j) B_j^*)|& = | \Tr( S(A)B^*)| \leq \|A\|_p \|B\|_p.
  \end{align*}
  The lemma follows from \eqref{eq:square_pinequality_transference} and \eqref{eq:square_qinequality_transference}.
  \end{proof}

\begin{lem}\label{lem:change_of_variable_Schur} There is a constant $C=C(d,\rho)$ such that if we set $h=f\ast g^*$, 
  \[ \Big| \Tr( S_j(A_j) B_j^*) - \int_\R m(\cos(\theta + \frac t r))  \widehat h(t) dt \Big|\leq C\Big(\frac{1}{\delta R}+ \frac{R^2}{r\sin\theta}\Big)\|(1+|s|)\widehat{h}\|_1.\]
  %  \[ \Big| \Tr( S_j(A_j) B_j^*) - \int_\R m(\cos(\theta + \frac t r))  \widehat h(t) dt \Big|\leq C\Big(\frac{1}{\delta R}\|(1+|s|)\widehat{h}\|_1  + \frac{R^2}{r\sin \theta}\|\widehat{h}'\|_1\Big).\]
\end{lem}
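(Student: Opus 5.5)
The plan is to compute the trace explicitly, reduce it to an integral of $\widehat h(s_d)$ against an average of $m$ over the ball of radius $R$, and then handle the two error sources separately. The scale-$\delta$ cut-off in the transverse variables produces the $\frac{1}{\delta R}$ term. The stereographic distortion of Lemma~\ref{eq:Taylor_expansion_in_r} produces the $\frac{R^2}{r\sin\theta}$ term. The point is to obtain both errors \emph{without differentiating $\widehat h$}, so all the smoothness has to be taken from $\varphi_R$ instead.

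\emph{Step 1: the trace as an integral.} Since $S_j(A_j)$ has kernel $\widetilde m_r(U_jx,U_jy)\varphi_R(x)^{1/p}\widehat{f_j}(x-y)\varphi_R(y)^{1/p}$, and $\frac1p+\frac1q=1$,
\[ \Tr(S_j(A_j)B_j^*)=\iint \varphi_R(x)\varphi_R(y)\,\widetilde m_r(U_jx,U_jy)\,\widehat{f_j}(x-y)\overline{\widehat{g_j}(x-y)}\,dx\,dy .\]
By \eqref{eq:FT_of_f_jg_j} the product $\widehat{f_j}\overline{\widehat{g_j}}(s)$ equals $\delta^{d-1}\widehat h(\langle s,\vecteur_j\rangle)|\widehat\rho|^2(\delta P_js)$. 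Because $\varphi_R$ is radial, the rotation $U_j$ can be absorbed, and we may assume $\vecteur_j=e_d$. Writing $y=x-s$ with $s=(s',s_d)$, and using that $\widetilde m_r(x,y)=m(\cos(\theta+\psi_r(x,y)/r))$, we get
\[ \Tr(S_j(A_j)B_j^*)=\int \widehat h(s_d)\,\delta^{d-1}|\widehat\rho|^2(\delta s')\Big(\int \varphi_R(x)\varphi_R(x-s)\,m\big(\cos(\theta+\tfrac{\psi_r(x,x-s)}{r})\big)dx\Big)ds .\]
Here $\delta^{d-1}|\widehat\rho|^2(\delta s')\,ds'$ is a probability density concentrated on $|s'|\lesssim 1/\delta$.

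\emph{Step 2: removing the distortion without touching $\widehat h$.} Fix $s$. Lemma~\ref{eq:Taylor_expansion_in_r} gives, for $|x|,|x-s|\le R$,
\[ |\psi_r(x,x-s)-s_d|\lesssim \frac{R^2}{r\sin\theta},\qquad \partial_{x_d}\psi_r(x,x-s)=O\Big(\frac{R}{r\sin\theta}\Big) \]
(the second bound because the $x_d$- and $y_d$-derivatives are $\approx 1$ and $\approx -1$). I would not compare $m(\cos(\theta+\psi_r/r))$ with $m(\cos(\theta+s_d/r))$ pointwise, which is impossible since $m$ is merely measurable. Instead I would re-parametrize the \emph{pair} $(x,y)$ by a common translation along $e_d$. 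Concretely, I would change variables $(x,y)\mapsto(x',y')=(x,y)+a(x,y)e_d$, with $a=O(R^2/(r\sin\theta))$ chosen so that $\psi_r(x,y)$ becomes the difference of $d$-th coordinates of $(x',y')$. This preserves $x-y$, so $\widehat h(s_d)$ and $|\widehat\rho|^2(\delta s')$ are untouched.

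The costs of this change of variables all fall on the $\varphi_R$ factors:
\begin{itemize}
\item the Jacobian is $1+O(R/(r\sin\theta))$;
\item replacing $\varphi_R(x)\varphi_R(y)$ by $\varphi_R(x')\varphi_R(y')$ costs $\|\nabla\varphi_R\|\,|a|\lesssim \frac{R}{r\sin\theta}$ in $L_1$ after normalization.
\end{itemize}
Both errors are $\lesssim \frac{R^2}{r\sin\theta}$ (we may take $R\ge1$). They are multiplied by $|\widehat h(s_d)|$ and integrated, which gives $\frac{R^2}{r\sin\theta}\|\widehat h\|_1$. The inner integral thereby becomes $m(\cos(\theta+s_d/r))\int\varphi_R(x)\varphi_R(x-s)\,dx$.

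\emph{Step 3: Følner error.} Since $\|\varphi_R\|_2=1$ and $\varphi_R$ is smooth at scale $R$,
\[ \Big|1-\int\varphi_R(x)\varphi_R(x-s)\,dx\Big|\lesssim \min\Big(1,\frac{|s|}{R}\Big)\le\frac{|s'|+|s_d|}{R}. \]
Integrating against $|\widehat h(s_d)|\,\delta^{d-1}|\widehat\rho|^2(\delta s')$ gives
\[ \frac1R\Big(\|s_d\widehat h\|_1+\frac{C(\rho)}{\delta}\|\widehat h\|_1\Big)\lesssim \frac{1}{\delta R}\|(1+|s|)\widehat h\|_1, \]
using $\delta<1$. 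Integrating out $s'$, which has total mass $1$, then leaves exactly $\int m(\cos(\theta+t/r))\widehat h(t)\,dt$.

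\emph{Main obstacle.} The delicate step is Step 2. I must verify that the map $(x,y)\mapsto(x,y)+a\,e_d$ is a diffeomorphism of the relevant region, with $a$ solving $\psi_r(x+ae_d,\,y+ae_d)=x_d-y_d$ or an equivalent implicit equation. The derivative bounds of Lemma~\ref{eq:Taylor_expansion_in_r} say $\partial_a$ of the left side is $O(R/(r\sin\theta))$, which is small but not zero. I therefore expect to have to set up the reparametrization differently: as the one-parameter map in the translation direction $e_d$ that sends the level set $\{\psi_r=t\}$ to $\{x_d-y_d=t\}$, which is possible because on the support both are graphs over $(x',y',x_d+y_d)$ with slopes differing by $O(R/(r\sin\theta))$. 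Points of the support of $\varphi_R\otimes\varphi_R$ that leave the ball under this map are handled by the same $\frac{R^2}{r\sin\theta}$ budget.
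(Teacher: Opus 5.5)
\textbf{There is a genuine gap in Step 2.} Your Steps 1 and 3 are sound: they amount to the paper's truncation and F\o{}lner estimates and give the $\frac{1}{\delta R}$ term. The problem is the premise of Step 2, that the $\frac{R^2}{r\sin\theta}$ error can be obtained from the smoothness of $\varphi_R$ alone, with no regularity of $\widehat h$. That premise is false.

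First, the translation itself cannot work. The map $(x,y)\mapsto(x+ae_d,y+ae_d)$ preserves $x_d-y_d$, so its orbits lie inside level sets of $x_d-y_d$. It therefore can never make $\psi_r$ equal to the difference of the $d$-th coordinates. Along these orbits $\psi_r$ changes only at rate $(\partial_{x_d}+\partial_{y_d})\psi_r=O(\frac{R}{r\sin\theta})$, and this rate vanishes at $(x,y)=(0,0)$. Absorbing a discrepancy of size $\frac{R^2}{r\sin\theta}$ would thus need shifts of order at least $R$ (the whole support scale of $\varphi_R$), when a solution exists at all. It cannot be done with $a=O(\frac{R^2}{r\sin\theta})$.

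More fundamentally, since $m$ is an arbitrary bounded measurable function, what must be estimated is a total variation distance. It is the distance between $\widehat h(t)\,dt$ and the image under $(x,y)\mapsto\psi_r(x,y)$ of the measure $\widehat h(x_d-y_d)\,\delta^{d-1}|\widehat\rho|^2(\delta(x'-y'))\varphi_R(x)\varphi_R(y)\,dx\,dy$.
\begin{itemize}
\item On a level set $\{\psi_r=t\}$ one has $x_d-y_d=t-E$, where $E=\psi_r-(x_d-y_d)$ ranges over an interval whose length is in general of order $\frac{R^2}{r\sin\theta}$.
\item So the image measure is an average of translates of $\widehat h$ at that scale. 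The smoothness of $\varphi_R$ only affects the averaging weights.
\item An argument that uses nothing about $\widehat h$ beyond $\|(1+|s|)\widehat h\|_1$ would apply equally to a function concentrated at a scale much smaller than $\frac{R^2}{r\sin\theta}$. For such a function this distance is comparable to its $L_1$ norm.
\end{itemize}
Your fallback in the last paragraph hits the same obstruction. Matching the two families of graphs over $(x',y',x_d+y_d)$ means moving points in the $x_d-y_d$ direction, and that changes the argument of $\widehat h$.

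\textbf{The missing ingredient} is to let the distortion fall on $\widehat h$ and then pay for it with the compact support of $h$; this is what the paper does.
\begin{itemize}
\item For fixed $y=R\xi$ and $\omega=\delta(x'-y')$, change variables in the difference variable: $s\mapsto t=\psi_r(R\xi+(\frac{\omega}{\delta},s),R\xi)$.
\item Its Jacobian is $1+O(\frac{R}{r\sin\theta})$ once $R\le\alpha r\sin\theta$; otherwise the lemma is trivial.
\item After this change, $m$ is evaluated at $\cos(\theta+\frac tr)$ while $\widehat h$ is still evaluated at $s$, with $|t-s|\lesssim\frac{R^2}{r\sin\theta}$. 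This costs $\frac{R^2}{r\sin\theta}\|\widehat h'\|_1$.
\item Since $h=f\ast g^*$ is supported in $[-3,-1]$, take $\chi\in C^\infty_c(\R)$ equal to $-2i\pi x$ on $[-3,-1]$. Then $\widehat h'=\widehat{\chi h}=\widehat\chi\ast\widehat h$, hence $\|\widehat h'\|_1\le\|\widehat\chi\|_1\|\widehat h\|_1$.
\end{itemize}
This Bernstein-type inequality is exactly what lets the right-hand side of the lemma involve only $\|(1+|s|)\widehat h\|_1$.
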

\begin{proof}
  Developping we have
  \[  \Tr( S_j(A_j) B_j^*) = \iint \widetilde{m}_r(U_jx,U_jy) \varphi_R(x) \varphi_R(y) (\widehat{f_j} \overline{\widehat{g_j}})(x-y) dx dy.\]
  We can make the change of variable $x\mapsto U_jx$, $y\mapsto U_jy$ and observe that ($\varphi_R$ is radial) this quantity is equal to
  \[ \iint \widetilde{m}_r(x,y) \varphi_R(x) \varphi_R(y) (\widehat{f_j} \overline{\widehat{g_j}})(U_j^{-1}(x-y)) dx dy.\]
  Moreover, as in \eqref{eq:FT_of_f_jg_j}, in the coordinates $x=(x',x_d) \in \R^{d-1}\times\R$, it becomes
  \[ \iint \widetilde{m}_r(x,y) \varphi_R(x) \varphi_R(y) \widehat{h}(x_d-y_d) \delta^{d-1} |\widehat\rho(\delta(x'-y'))|^2 dx dy.\]
  Make a change of variable $\omega= \delta(x'-y') \in \R^{d-1}, s=x_d-y_d \in \R$, $\xi=y/R \in \R^d$ in the preceding equation. If we write $F = |\widehat{\rho}|^2$ and $M_r(\xi,\omega,s) = \widetilde{m}_r(R\xi+(\frac{\omega}{\delta},s),R\xi)$, we get
  \[ \Tr( S_j(A_j) B_j^*) = \iint M_r(\xi,\omega,s) \varphi_1(\xi+(\frac{\omega}{\delta R},\frac{s}{R})) \varphi_1(\xi) \widehat{h}(s) F(\omega) ds d\omega d\xi.\]
  From this expression, we readily see that the left-hand side in the statement of the lemma is $\leq 2 \|\widehat{h}\|_1$, so the lemma is obvious with constant $2/c$ if $\frac{1}{\delta R}+ \frac{R^2}{r\sin\theta} \geq c$.
  
Since $\varphi_1$ is $C^1$ and compactly supported, we have
  \[\varphi_1(\xi + (\frac{\omega}{\delta R},\frac{s}{R})) =\varphi_1(\xi) + O\Big(\frac{|\omega|}{R\delta} + \frac{s}{R}\Big),\]
  and we deduce
  \begin{align}\label{eq:value_of_TrTmAB}  \Tr( S_j(A_j) B_j^*) = \iint M_r(\xi,\omega,s) \varphi_1(\xi)^2 \widehat{h}(s) F(\omega) ds d\omega d\xi\\ + O\Big(\frac{1}{\delta R} \|(1+|w|) F\|_1 \|(1+|s|)\widehat{h}\|_1\Big).\notag
  \end{align}
  Let $\Omega=\{(\xi,\omega,s) \mid |\xi|\leq 1, |\omega|\leq  \delta R, |s|\leq R\}$. Then (recall $\varphi_1$ is supported in the ball of radius $1$) we have
  \[ \|\widehat h\|_{L_1([-R,R]^c)} + \|F\|_{L_1(\R^{d-1} \setminus B(0, \delta R))} \leq \frac{1}{\delta R} \|(1+|w|) F\|_1 \|(1+|s|)\widehat{h}\|_1,\]
  and we obtain
  \begin{align}\label{eq:value_of_TrTmAB_after_truncation}  \Tr( S_j(A_j) B_j^*) = \iint_\Omega  M_r(\xi,\omega,s) \varphi_1(\xi)^2 \widehat{h}(s) F(\omega) ds d\omega d\xi\\ + O\Big(\frac{1}{\delta R} \|(1+|w|) F\|_1 \|(1+|s|)\widehat{h}_1\Big).\notag
  \end{align}
  
  For fixed $\xi,\omega$, we make the change of variable $s \mapsto t=\function( R\xi + (\frac \omega \delta,s), R\xi)$ from Lemma~\ref{eq:Taylor_expansion_in_r}. To do so, we can assume that $R \leq  \alpha r \sin \theta $ for a constant $\alpha>0$ so that the bound on $\partial \function/\partial x_d-1$ is $\leq \frac 1 2$, and that the image $|s|\leq R\mapsto t$ contains $[-R/2,R/2]$. Indeed, as we have already argued, otherwise the lemma is obvious. In that case, we see that the change of variable is bijective and we denote by $t\mapsto s_{\xi,\omega}(t)=s$ the inverse map. If $\Sigma$ is the image of $\Omega$ by $(\xi,\omega,s)\mapsto(\xi,\omega,t)$, then $\Sigma$ contains $B(0,1) \times B(0,\delta R) \times [-R/2,R/2]$. Moreover, by definition of $M_r$, $M_r(\xi,\omega,s) = m (\cos(\theta + \frac{t}{r}))$, so by Lemma~\ref{eq:Taylor_expansion_in_r} the integral in \eqref{eq:value_of_TrTmAB_after_truncation} is
\[ \iint_{\Sigma} \varphi_1(\xi)^2  m(\cos(\theta+\frac t r)) \widehat h(s)F(\omega) dt d\omega d\xi + O\Big(\frac{R}{r \sin \theta} \|\widehat{h}\|_1\Big).\]
%%   Define $\Sigma_1 = \{(\xi,\omega,t) \in \Sigma \mid R|\xi|+|\omega|/\delta \leq |s|\}$ and $\Sigma_2=\Sigma \setminus\Sigma_1$. On $\Sigma_1$, we have
%%   \[ |t - s| =  O(\frac{s^2}{r \sin \theta}) = O(\alpha s),\]
%%   so if $\alpha$ is chosen small enough we have
%%   \[ |t-s| \leq \frac{1}{2}|s|.\]
%% This fixes our choice of $\alpha$. The preceding implies that $|s| \leq 2|v|$ for every $v$ in the interval between $t$ and $s$. Therefore
%%   \[ |t-v| \leq |t-s| = O(\frac{s^2}{r \sin \theta}) = O(\frac{v^2}{r\sin \theta}).\]
%%   On the other hand, on $\Sigma_2$ we have
%%   \[|t-v| =  O\Big(\frac{R^2|\xi|^2+|\omega|^2/\delta^2}{r\sin \theta}\Big) = O\Big(\frac{R^2}{r\sin \theta}\Big).\]
%%     To summarize, there is a constant $C$ such that on $\Sigma$ we have for every $v \in [t,s]$
%%     \[|t-v|\leq \frac{C}{r \sin \theta} \big(R^2+|v|^2\big).\]
On $\Sigma$, Lemma~\ref{eq:Taylor_expansion_in_r} implies
\[ |t-s| \leq C\frac{R^2}{r\sin \theta},\]
for a constant $C$. Therefore, 
  \[|\widehat{h}(t) - \widehat{h}(s)| \leq \int_\R |\widehat{h}'(v)| 1_{|t-v| \leq  \frac{CR^2}{r \sin \theta}}dv.\]
Integrating, we obtain
    \begin{multline*} \Big| \iint_{\Sigma} \varphi_1(\xi)^2  m(\cos(\theta+\frac t r)) (\widehat h(s) - \widehat{h}(t)) F(\omega) dt d\omega d\xi \Big| \\
      \leq  \iiint_{\R^{2d}} \varphi_1(\xi)^2 |\widehat h'(v)| F(\omega) \frac{2CR^2}{r\sin \theta} dv d\omega d\xi = \frac{2C R^2}{r\sin \theta}\|\widehat{h}'\|_1.
    \end{multline*}
Observe that, since $h$ is supported in $[-3,-1]$, there is a universal constant such that $\|\widehat{h}'\|_1 \lesssim  \|\widehat h\|_1$. This proves that the integral in \eqref{eq:value_of_TrTmAB_after_truncation} is
    \[ \iint_{\Sigma} \varphi_1(\xi)^2  m(\cos(\theta+\frac t r)) \widehat{h}(t) F(\omega) dt d\omega d\xi + O\Big(\frac{R^2}{r \sin \theta} \|\widehat{h}\|_1 \Big).\]
    Finally, since $\Sigma$ contains $\R^d \times B(0,\delta R) \times [-R/2,R/2]$, this integral is
\begin{multline*} \iint_{\R^d\times \R^d} \varphi_1(\xi)^2 m(\cos(\theta + \frac t r)) \widehat h(t) F(\omega) d\xi dt d\omega\\ + O(\|\widehat h\|_{L_1([-R/2,R/2]^c)} + \|F\|_{L_1(\R^{d-1} \setminus B(0, \delta R))}) = O\Big(\frac{1}{\delta R} \|(1+|w|) F\|_1 \|(1+|s|)\widehat{h}\|_1\Big).
\end{multline*}
The last integral can be simplified to $\int m(\cos(\theta + \frac t r))$, because $\|\varphi_1\|_2^2 = \int F = 1$. Putting everything together, we get
\begin{multline*} \Big| \Tr( S_j(A) B^*) - \int_\R m(\cos(\theta + \frac t r)) \widehat h(t) dt \Big| \\\lesssim \frac{1}{\delta R}\|(1+|w|) F\|_1 \|(1+|s|)\widehat{h}\|_1  + \frac{R^2}{r\sin \theta}\|\widehat{h}\|_1.\qedhere
\end{multline*}
\end{proof}

We can now conclude the proof of Lemma~\ref{lem:Spbddness_quantitative}. By Lemma~\ref{lem:square_inequality_Schur} and Lemma~\ref{lem:change_of_variable_Schur}, we get
\[ \Big|\int_\R m(\cos(\theta+t/r)) \widehat h(t) dt\Big| \lesssim \frac{1}{\delta R} + \frac{R^2}{r\sin \theta} + \varepsilon^{\frac 1 p - \frac 1 2},\]
with implicit constants depending in $d,f,g,\rho$. This is true for every $R$ and every family of $\delta$-tubes, so taking $R = \Big(\frac{r\sin \theta}\delta\Big)^{\frac 1 3}$ and taking the infimum over all such tubes, we obtain
\[ \Big|\int_\R m(\cos(\theta+t/r)) \widehat h(t) dt\Big| \lesssim \frac{1}{\big(r\delta^2 \sin \theta \big)^{\frac 1 3}} + \fonction_d(\delta)^{\frac 1 p - \frac 1 2}.\]

\subsection{Consequence}
Let us state the following consequence. Again, the exponent $\frac 1 3$ is not optimized.
\begin{cor}\label{cor:Spbddness_LP_quantitative_precise}
Let $m:(-1,1) \to \C$ such that the Schur multiplier with symbol $(\xi,\eta)\mapsto m(\langle \xi,\eta\rangle)$ is bounded on $S_p(L_2(\R^d))$ with norm $1$. Then for almost every $s, t\in \R$,
\[ |m(\cos s) - m(\cos t)| \lesssim  \int_0^{|s-t|^{1/3}} \frac{\fonction_d(\delta)^{|\frac 1 p - \frac 1 2|}}{\delta} d\delta + |s-t|^{\frac 1 9} \big(\frac{1}{|\sin s|^{\frac 1 3}}+\frac{1}{|\sin s|^{\frac 1 3}}\big)\]
with implicit constant depending on $d$.
\end{cor}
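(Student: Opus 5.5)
The proof follows the pattern of Corollary~\ref{cor:Lpbddness_LP_quantitative_precise}, replacing Theorem~\ref{thm:Lpbddness_LP_quantitative} by Theorem~\ref{thm:Spbddness_LP_quantitative}. The extra factor $\frac{1}{|\delta^2 2^n\sin\theta|^{1/3}}$ depends on the point, so the Littlewood--Paley bounds must be used pointwise. Set $\Phi=m\circ\cos$, a bounded $2\pi$-periodic function with $\|\Phi\|_\infty\le 1$; the norm $1$ of the Schur multiplier dominates the sup norm of the symbol. We may assume $|s-t|\le c$ small; otherwise the right-hand side is $\gtrsim 1$ and the bound is trivial. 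Put
\[ c_n(\theta) = \inf_{\delta\in(0,1)} \Big(\fonction_d(\delta)^{|\frac 1p-\frac12|} + \frac{1}{|\delta^2 2^n\sin\theta|^{1/3}}\Big)\le a_n + b_n(\theta),\]
where $a_n$ is obtained by choosing $\delta\in[2^{-(n+1)/9},2^{-n/9}]$ and averaging. This gives
\[ a_n\lesssim \int_{2^{-(n+1)/9}}^{2^{-n/9}}\frac{\fonction_d(\delta)^{|\frac1p-\frac12|}}{\delta}\,d\delta ,\qquad b_n(\theta)\lesssim 2^{-n/3}2^{2n/27}|\sin\theta|^{-1/3}\le 2^{-n/9}\,|\sin\theta|^{-1/3}.\]

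Next we redo Lemma~\ref{lem:continuity_from_LittlewoodPaley} pointwise. Write $\Phi=\sum_n W_n\ast\Phi$ and
\[ |\Phi(s)-\Phi(t)|\le \sum_{n>N}\big(|W_n\ast\Phi(s)|+|W_n\ast\Phi(t)|\big)+\sum_{n\le N}|s-t|\sup_{[s,t]}|(W_n\ast\Phi)'|.\]
For the derivative we use the second conclusion of Proposition~\ref{prop:from_one_h_to_all}, with $G_r$ from Example~\ref{ex:HardyLittlewood}:
\[ |(W_n\ast\Phi)'(\theta)|\lesssim 2^n\big(a_n+2^{-n/9}|\sin\theta|^{-1/3}\big).\]
Here a minor issue appears. $\sup_{[s,t]}|\sin|^{-1/3}$ is controlled by $|\sin s|^{-1/3}+|\sin t|^{-1/3}$ only when $[s,t]$ avoids a neighbourhood of $\pi\Z$. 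If $|s-t|$ is comparable to $\min(|\sin s|,|\sin t|)$, the claimed bound is $\gtrsim 1$ anyway (since $|s-t|^{1/9}|\sin s|^{-1/3}\ge|s-t|^{-2/9}\ge 1$), so this case is trivial. (The statement has a typo, $\sin s$ twice; we read one as $\sin t$.)

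For the tail sums, $\sum_{n>N}a_n\lesssim\int_0^{2^{-N/9}}\fonction_d^{|\frac1p-\frac12|}\frac{d\delta}{\delta}$ and $\sum_{n>N}2^{-n/9}\lesssim 2^{-N/9}$. For the low frequencies, $a_n\le a_0\lesssim1$ because $\fonction_d\le 1$. So
\[ \sum_{n\le N}2^n\big(a_n+2^{-n/9}|\sin|^{-1/3}\big)\lesssim 2^N\big(1+|\sin s|^{-1/3}+|\sin t|^{-1/3}\big).\]
Choose $N$ with $2^N\approx|s-t|^{-1}$, more precisely with $2^{-N/9}\approx |s-t|^{1/9}$. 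The low-frequency contribution is then $O(1)\cdot|\sin|^{-1/3}$ terms, which is too weak. We therefore instead choose $2^N\approx |s-t|^{-8/9}$. The low-frequency part becomes $|s-t|^{1/9}(1+|\sin s|^{-1/3}+|\sin t|^{-1/3})$. The tail becomes $\int_0^{|s-t|^{8/81}}\cdots+|s-t|^{8/81}|\sin|^{-1/3}$.

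The main obstacle is this exponent bookkeeping. The exponents claimed in the statement ($\frac13$ for the integral limit and $\frac19$ for the power) require tuning both the averaging window for $\delta$ and the cutoff $N$. First, we take $\delta\approx 2^{-n/3}$ windows, which make the tail integral run up to $2^{-N/3}$. Second, we bound $b_n$ more sharply: the $\delta^{-2/3}$ loss is absorbed by using $\delta\approx 2^{-n/9}$ only in the $b_n$ term. With $\delta$ allowed up to $1$, we get $b_n\lesssim 2^{-n/3}|\sin\theta|^{-1/3}$ whenever $\fonction_d(\delta)$ is monotone-bounded. Finally we pick $2^N=|s-t|^{-1}$. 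The remaining work is checking that these choices give exactly $\int_0^{|s-t|^{1/3}}$ and $|s-t|^{1/9}$. Since the statement notes that the exponents are not optimized, any consistent choice yielding exponents at least as good suffices.
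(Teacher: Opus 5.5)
\textbf{There is a genuine gap: what you prove is strictly weaker than the statement, and the repair you sketch is invalid.}

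Much of your set-up is fine. The pointwise Littlewood--Paley splitting is correct. So is the derivative bound $|(W_n\ast\Phi)'(\theta)|\lesssim 2^n c_n(\theta)$ via Proposition~\ref{prop:from_one_h_to_all}. Your case distinction on whether $|s-t|$ is small compared to $|\sin s|$ is a legitimate substitute for the paper's use of $\int_s^t c_n(\theta)\,d\theta\lesssim |s-t|\,c_n(s)$ from Example~\ref{ex:HardyLittlewood}.

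What you actually carry out, with $2^N\approx|s-t|^{-8/9}$, only yields
\[ |\Phi(s)-\Phi(t)|\lesssim \int_0^{|s-t|^{8/81}}\frac{\fonction_d(\delta)^{|\frac1p-\frac12|}}{\delta}\,d\delta+|s-t|^{8/81}\big(|\sin s|^{-1/3}+|\sin t|^{-1/3}\big).\]
Since $|s-t|<1$, both the integration limit and the power are worse than in the statement. Your proposed repair does not work. $c_n(\theta)$ is an infimum over a \emph{single} $\delta$ of the sum $\fonction_d(\delta)^{|\frac1p-\frac12|}+|\delta^2 2^n\sin\theta|^{-1/3}$. So you cannot use $\delta\approx 2^{-n/3}$ for the first term and $\delta\approx 2^{-n/9}$, or $\delta$ close to $1$, for the second. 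With $\delta\approx 1$ the first term is of order $1$, not small. The ``remaining work'' you defer is exactly the missing part of the proof.

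\textbf{Where the loss comes from.} You split $c_n\le a_n+b_n$ and bound $a_n\lesssim 1$ in the low-frequency sum, which forces $2^N\ll|s-t|^{-1}$.

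\textbf{The missing idea.} Keep the infimum. The quantity
\[ 2^{n/3}c_n(s)=\inf_\delta\big(2^{n/3}\fonction_d(\delta)^{|\frac1p-\frac12|}+|\delta^2\sin s|^{-1/3}\big)\]
is non-decreasing in $n$. Hence
\[ \sum_{n\le N}2^nc_n(s)\le 2^{N/3}c_N(s)\sum_{n\le N}2^{2n/3}\lesssim 2^Nc_N(s).\]
Take $N$ minimal with $2^{-N}\le|s-t|$. Then the low-frequency contribution $|s-t|\sum_{n\le N}2^nc_n(s)$ is $\lesssim c_N(s)$, and it is absorbed in the tail $\sum_{n\ge N}(c_n(s)+c_n(t))$.

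Now use the window $\delta\in[2^{-(n+1)/3},2^{-n/3}]$, with one $\delta$ for both terms. This gives
\[ c_n(\theta)\lesssim\int_{2^{-(n+1)/3}}^{2^{-n/3}}\fonction_d(\delta)^{|\frac1p-\frac12|}\frac{d\delta}{\delta}+2^{-n/9}|\sin\theta|^{-1/3}.\]
So the tail is $\lesssim\int_0^{2^{-N/3}}\fonction_d(\delta)^{|\frac1p-\frac12|}\frac{d\delta}{\delta}+2^{-N/9}\big(|\sin s|^{-1/3}+|\sin t|^{-1/3}\big)$. Since $2^{-N}\le|s-t|$, this gives exactly the exponents $\frac13$ and $\frac19$. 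Your case distinction is compatible with this fix, since in the non-trivial case $c_n(\theta)\lesssim c_n(s)$ on $[s,t]$.

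Your weaker $8/81$ estimate would still suffice for Corollary~\ref{cor:Spbddness_implies_integrable_modulus}. Replacing $t^{1/3}$ by $t^{8/81}$ only changes $\log|\log\delta|$ by an additive constant there. But it does not prove the stated inequality.
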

\begin{proof} Write
  \[ c_n(\theta) = \inf_{\delta \in (0,1)} \left(\fonction_d(\delta)^{\big|\frac 1 p - \frac 1 2\big|} + \frac{1}{|2^n \delta^2 \sin \theta|^{1/3}} \right).\]
  Taking $\delta \in [2^{-(n+1)/3},2^{-n/3}]$ and integrating, we have
  \[ c_n(\theta) \leq \frac{3}{\log 2} \int_{2^{-(n+1)/3}}^{2^{-n/3}} \frac{\fonction_d(\delta)^{\big|\frac 1 p - \frac 1 2\big|}}{\delta}d\delta + \frac{1}{|2^{(n-2)/3} \sin \theta|^{\frac 1 3}},\]
  and therefore for every integer $N$,
  \begin{equation}\label{eq:bound_cns} \sum_{n \geq N} c_n(\theta) \lesssim \int_0^{2^{-N/3}} \frac{\fonction_d(\delta)^{\big|\frac 1 p - \frac 1 2\big|}}{\delta}d\delta + \frac{1}{|2^{N/3} \sin \theta|^{\frac 1 3}}.
  \end{equation}

Write $\varphi=m\circ \cos$ and $\varphi_n = W_n \ast \varphi$. As in the proof of Lemma~\ref{lem:continuity_from_LittlewoodPaley}, Theorem~\ref{thm:Spbddness_LP_quantitative} allows us to write for almost every $s<t$ and every $N\geq 0$,
\begin{align*} |\varphi(s)-\varphi(t)| & \leq  \sum_{n\leq N} \int_{s}^t |\varphi_n'(\theta)| d\theta + \sum_{n>N} |\varphi_n(s)|+|\varphi_n(t)|\\
  & \leq \sum_{n \leq N} \int_{s}^t 2^n c_n(\theta) d\theta + \sum_{n>N} c_n(s)+c_n(t)\\
  & \leq \sum_{n \leq N} |s-t| 2^n c_n(s) + \sum_{n>N} c_n(s)+c_n(t).
  \end{align*}
The inequality $|\varphi_n'(\theta)| = |(W_n' \ast \varphi)(\theta)| \lesssim 2^n c_n(\theta)$ is where we use Theorem~\ref{thm:Spbddness_LP_quantitative}, using also Proposition~\ref{prop:from_one_h_to_all} and Example~\ref{ex:HardyLittlewood}, and the inequality $\int_{s}^t c_n(\theta) d\theta \leq |s-t| c_n(s)$ is Example~\ref{ex:HardyLittlewood}.

The sequence $2^{n/3} c_n(s)$ is clearly non-decreasing, so
  \[ \sum_{n \leq N} 2^n c_n(s) \leq 2^{N} c_N(s) \sum_{n\leq N} 2^{2(n-N)/3} \leq \frac{1}{2^{2/3}-1} 2^{N} c_N(s).\] If $N \geq 0$ is the smallest integer such that $2^{-N} \leq |s-t|$, 
  \[ |\varphi(s)-\varphi(t)| \lesssim  \sum_{n \geq N} c_n(s)+c_n(t),\]
  which by \eqref{eq:bound_cns} proves the corollary.
\end{proof}

\begin{cor}\label{cor:Spbddness_implies_integrable_modulus}
  If $d \in \N, 1 \leq p \leq \infty$ are such that the function
  \[ \delta \mapsto \frac{\fonction_d(\delta)^{|\frac 1 p - \frac 1 2|} \log |\log \delta|}{\delta}\]
  is integrable at $0$, then every function $m:(-1,1) \to \C$ such that the Schur multiplier with symbol $(\xi,\eta)\mapsto m(\langle \xi,\eta\rangle)$ is bounded on $S_p(L_2(\R^d))$ with norm $\leq 1$ coincides almost everywhere with a continuous function whose modulus of continuity $\omega$ at $0$ is independent of $m$ and such that $t\mapsto \frac{\omega(t)}{t|\log t|}$ is integrable at $0$.

  In particular, if $\Gamma$ is $\SL_{2d-1}(\Z)$ or any group as in Theorem~\ref{thm:rank0reductionImproved}, then $L_p(\LL \Gamma)$ does not have the Operator space Approximation Property.
\end{cor}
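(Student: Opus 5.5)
The plan is to feed Corollary~\ref{cor:Spbddness_LP_quantitative_precise} into Theorem~\ref{thm:rank0reductionImproved}, taking $\varepsilon(\delta)$ to be an explicit modulus of continuity at $\theta=\pi/2$ (that is, at $m(0)$, since $\cos(\pi/2)=0$). Write $a=|\frac 1 p-\frac 1 2|$ and $I(u)=\int_0^u \fonction_d(\delta)^a\frac{d\delta}{\delta}$. The integrability hypothesis implies in particular that $I(u)<\infty$ and $I(u)\to 0$ as $u\to 0$. Corollary~\ref{cor:Spbddness_LP_quantitative_precise} then gives, for almost every $s,t$ in a compact subinterval $J\subset(0,\pi)$ containing $\pi/2$,
\[ |m(\cos s)-m(\cos t)|\lesssim I(|s-t|^{1/3})+|s-t|^{1/9}. \]
The right-hand side is independent of $m$ and tends to $0$ as $|s-t|\to0$. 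Hence $m\circ\cos$ agrees almost everywhere on $J$ with a uniformly continuous function, and the same holds for $m$ on $\cos(J)$, since $\cos$ is bi-Lipschitz there. Covering $(0,\pi)$ by such intervals shows that $m$ agrees almost everywhere on $(-1,1)$ with a continuous function. From now on I replace $m$ by this representative; the Schur multiplier is unchanged, since the two symbols agree almost everywhere on $\sphere^d\times\sphere^d$.

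Near $0$, $|\arccos x-\pi/2|\asymp |x|$, so the modulus at $0$ is
\[ \omega(t)\lesssim I(Ct^{1/3})+t^{1/9}. \]
To check the integrability of $\omega(t)/(t|\log t|)$ near $0$, the term $t^{1/9}$ contributes a finite amount. For the main term, substitute $u=Ct^{1/3}$: then $dt/t=3\,du/u$ and $|\log t|\asymp|\log u|$, so it suffices to show
\[ \int_0^{c}\frac{I(u)}{u|\log u|}\,du<\infty. \]
By Fubini this equals
\[ \int_0^c \frac{\fonction_d(\delta)^a}{\delta}\int_\delta^c\frac{du}{u|\log u|}\,d\delta, \]
and the inner integral is $\log|\log\delta|-\log|\log c|\le\log|\log\delta|$. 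This is exactly where the hypothesis involving $\log|\log\delta|$ is used, and it gives the required finiteness.

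For the operator-algebra consequence, I apply Theorem~\ref{thm:rank0reductionImproved} with $\varepsilon(\delta):=\sup_{|x|\le\delta}\omega'(x)$, where $\omega'(x)=C\big(I(C|x|^{1/3})+|x|^{1/9}\big)$ is the explicit bound above. This $\varepsilon$ is non-decreasing, and the integrability of $\varepsilon(\delta)/(\delta|\log\delta|)$ at $0$ is what was just shown. The step I expect to need the most care is the hypothesis of Theorem~\ref{thm:rank0reductionImproved}: it requires $|\varphi(0)-\varphi(\delta)|\le\varepsilon(|\delta|)\,\|\varphi(\langle\cdot,\cdot\rangle)\|_{MS_p}$ for continuous $\varphi$, whereas the corollary is stated for symbols of norm exactly $1$. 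I would handle this in three steps.
\begin{enumerate}
\item Rescale: applying the bound to $\varphi/\|\varphi\|_{MS_p}$ gives the estimate for any continuous $\varphi$ with nonzero multiplier norm.
\item For continuous $\varphi$, the almost-everywhere inequality holds everywhere, because both sides are continuous in $(s,t)$.
\item The norm of $\varphi(\langle\cdot,\cdot\rangle)$ on $S_p(L_2(\sphere^d))$ is the norm that appears in Lemma~\ref{lem:Spbddness_quantitative}. The statement's ``$S_p(L_2(\R^d))$'' appears to be a typo for the sphere, and I would read it that way.
\end{enumerate}
With these in place, Theorem~\ref{thm:rank0reductionImproved} yields the failure of the OAP for $L_p(\LL\Gamma)$.
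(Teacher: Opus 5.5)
Your proposal is correct and follows the paper's route. You take the modulus from Corollary~\ref{cor:Spbddness_LP_quantitative_precise} and swap the order of integration, so that the inner integral $\int_\delta^c \frac{du}{u|\log u|}$ produces exactly the factor $\log|\log\delta|$; the paper does the same Fubini computation directly in $t$, over $[\delta^3,e^{-3}]$. You then invoke Theorem~\ref{thm:rank0reductionImproved}.

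One small detail is missing. Your bound only controls $|\varphi(0)-\varphi(\delta)|$ for $|\delta|$ small, but Theorem~\ref{thm:rank0reductionImproved} asks for a non-decreasing $\varepsilon$ on all of $[0,1]$. Away from $0$ this is immediate from $|\varphi(0)-\varphi(\delta)|\le 2\|\varphi\|_\infty\le 2\|\varphi(\langle\cdot,\cdot\rangle)\|_{MS_p}$, so you can take $\varepsilon$ equal to a constant beyond some $\delta_0$. Only integrability at $0$ matters.
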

\begin{proof}
  The consequence in terms of OAP is Theorem~\ref{thm:rank0reductionImproved}. For the first part, by Corollary~\ref{cor:Lpbddness_LP_quantitative_precise} we have to show that
  \[ \omega(t):= \int_0^{t^{1/3}} \frac{\fonction_d(\delta)^{|\frac 1 p - \frac 1 2|}}{\delta} d\delta + t^{\frac 1 9} \]
  satisfies that $t\mapsto \frac{\omega(t)}{t|\log t|}$ is integrable at $0$. The $t^{\frac 1 9}$ term does not cause any problem, so we compute the integral
  \begin{align*} \int_0^{1/e^3} \frac{1}{t |\log t|} \int_0^{t^{1/3}} \frac{\fonction_d(\delta)^{|\frac 1 p - \frac 1 2|}}{\delta} d\delta    & = \int_0^{1/e} \frac{\fonction_d(\delta)^{|\frac 1 p - \frac 1 2|}}{\delta} \int_{\delta^3}^{1/e^3} \frac{1}{t|\log t|} dt d\delta\\
    & =\int_0^{1/e} \frac{\fonction_d(\delta)^{|\frac 1 p - \frac 1 2|}}{\delta} \log |\log \delta | d\delta<\infty.\qedhere
  \end{align*}
\end{proof}

\bibliographystyle{plain}
\bibliography{biblio.bib}

\begin{thebibliography}{10}

\bibitem{MR2628799}
A.~B. Aleksandrov and V.~V. Peller.
\newblock Operator {H}\"older-{Z}ygmund functions.
\newblock {\em Adv. Math.}, 224(3):910--966, 2010.

\bibitem{MR1357166}
Stefan Banach.
\newblock {\em Th\'eorie des op\'erations lin\'eaires}.
\newblock \'Editions Jacques Gabay, Sceaux, 1993.
\newblock Reprint of the 1932 original.

\bibitem{MR1354598}
H.~Brezis and L.~Nirenberg.
\newblock Degree theory and {BMO}. {I}. {C}ompact manifolds without boundaries.
\newblock {\em Selecta Math. (N.S.)}, 1(2):197--263, 1995.

\bibitem{CaspersdlS}
Martijn Caspers and Mikael de~la Salle.
\newblock Schur and {F}ourier multipliers of an amenable group acting on
  non-commutative {$L^p$}-spaces.
\newblock {\em Trans. Amer. Math. Soc.}, 367(10):6997--7013, 2015.

\bibitem{MR4660138}
Jos\'e{}~M. Conde-Alonso, Adri\'an~M. Gonz\'alez-P\'erez, Javier Parcet, and
  Eduardo Tablate.
\newblock Schur multipliers in {S}chatten--von {N}eumann classes.
\newblock {\em Ann. of Math. (2)}, 198(3):1229--1260, 2023.

\bibitem{MR272988}
Roy~O. Davies.
\newblock Some remarks on the {K}akeya problem.
\newblock {\em Proc. Cambridge Philos. Soc.}, 69:417--421, 1971.

\bibitem{dlS_habil}
Mikael de~la Salle.
\newblock Rigidity and malleability aspects of groups and their
  representations.
\newblock Habilitation thesis, ENS Lyon,
  \url{http://perso.ens-lyon.fr/mikael.de.la.salle/HDR_delaSalle.pdf}, 2016.

\bibitem{MR4680356}
Mikael de~la Salle.
\newblock Analysis on simple {L}ie groups and lattices.
\newblock In {\em I{CM}---{I}nternational {C}ongress of {M}athematicians.
  {V}ol. 4. {S}ections 5--8}, pages 3166--3188. EMS Press, Berlin, [2023]
  \copyright 2023.

\bibitem{MR3035056}
Tim de~Laat.
\newblock Approximation properties for noncommutative {$L^p$}-spaces associated
  with lattices in {L}ie groups.
\newblock {\em J. Funct. Anal.}, 264(10):2300--2322, 2013.

\bibitem{MR3781331}
Tim de~Laat and Mikael de~la Salle.
\newblock Approximation properties for noncommutative {$L^p$}-spaces of high
  rank lattices and nonembeddability of expanders.
\newblock {\em J. Reine Angew. Math.}, 737:49--69, 2018.

\bibitem{zbMATH07962858}
Guillaume Dumas.
\newblock Regularity of matrix coefficients of a compact symmetric pair of
  {Lie} groups.
\newblock {\em Trans. Am. Math. Soc.}, 377(10):7421--7474, 2024.

\bibitem{MR402468}
Per Enflo.
\newblock A counterexample to the approximation problem in {B}anach spaces.
\newblock {\em Acta Math.}, 130:309--317, 1973.

\bibitem{zbMATH03370942}
Charles Fefferman.
\newblock The multiplier problem for the ball.
\newblock {\em Ann. Math. (2)}, 94:330--336, 1971.

\bibitem{MR2463316}
Loukas Grafakos.
\newblock {\em Modern {F}ourier analysis}, volume 250 of {\em Graduate Texts in
  Mathematics}.
\newblock Springer, New York, second edition, 2009.

\bibitem{MR94682}
A.~Grothendieck.
\newblock R\'{e}sum\'{e} de la th\'{e}orie m\'{e}trique des produits tensoriels
  topologiques.
\newblock {\em Bol. Soc. Mat. S\~{a}o Paulo}, 8:1--79, 1953.

\bibitem{MR0075539}
Alexandre Grothendieck.
\newblock Produits tensoriels topologiques et espaces nucl\'{e}aires.
\newblock {\em Mem. Amer. Math. Soc.}, No. 16:Chapter 1: 196 pp.; Chapter 2:
  140, 1955.

\bibitem{MR3047470}
Uffe Haagerup and Tim de~Laat.
\newblock Simple {L}ie groups without the approximation property.
\newblock {\em Duke Math. J.}, 162(5):925--964, 2013.

\bibitem{MR2784663}
Yaryong Heo, F\"edor Nazarov, and Andreas Seeger.
\newblock Radial {F}ourier multipliers in high dimensions.
\newblock {\em Acta Math.}, 206(1):55--92, 2011.

\bibitem{zbMATH01344340}
U.~Keich.
\newblock On {{\(L^p\)}} bounds for {Kakeya} maximal functions and the
  {Minkowski} dimension in {{\(\mathbb{R}^2\)}}.
\newblock {\em Bull. Lond. Math. Soc.}, 31(2):213--221, 1999.

\bibitem{Lafforgue08}
Vincent Lafforgue.
\newblock Un renforcement de la propri\'{e}t\'{e} ({T}).
\newblock {\em Duke Math. J.}, 143(3):559--602, 2008.

\bibitem{LafforguedlS}
Vincent Lafforgue and Mikael de~la Salle.
\newblock Noncommutative {$L^p$}-spaces without the completely bounded
  approximation property.
\newblock {\em Duke Math. J.}, 160(1):71--116, 2011.

\bibitem{MR2866074}
Stefan Neuwirth and \'Eric Ricard.
\newblock Transfer of {F}ourier multipliers into {S}chur multipliers and
  sumsets in a discrete group.
\newblock {\em Canad. J. Math.}, 63(5):1161--1187, 2011.

\bibitem{parcet2025impactschurmultipliersharmonic}
Javier Parcet.
\newblock The impact of schur multipliers in harmonic analysis and operator
  algebras, proceedings of the 2026 icm, 2025.

\bibitem{MR4882285}
Javier Parcet, Mikael de~la Salle, and Eduardo Tablate.
\newblock The local geometry of idempotent {S}chur multipliers.
\newblock {\em Forum Math. Pi}, 13:Paper No. e14, 21, 2025.

\bibitem{ParcetRicarddlS}
Javier Parcet, \'{E}ric Ricard, and Mikael de~la Salle.
\newblock Fourier multipliers in {SL}n({R}).
\newblock {\em Duke Math. J.}, 171(6):1235--1297, 2022.

\bibitem{zbMATH03158261}
Jaak Peetre.
\newblock Rectification {\`a} l'article ''{Une} caract{\'e}risation abstraite
  des op{\'e}rateurs diff{\'e}rentiels''.
\newblock {\em Math. Scand.}, 8:116--120, 1960.

\bibitem{MR1648908}
Gilles Pisier.
\newblock Non-commutative vector valued {$L_p$}-spaces and completely
  {$p$}-summing maps.
\newblock {\em Ast\'erisque}, (247):vi+131, 1998.

\bibitem{PisierOS}
Gilles Pisier.
\newblock {\em Introduction to operator space theory}, volume 294 of {\em
  London Mathematical Society Lecture Note Series}.
\newblock Cambridge University Press, Cambridge, 2003.

\bibitem{PisierXu}
Gilles Pisier and Quanhua Xu.
\newblock Non-commutative {$L^p$}-spaces.
\newblock In {\em Handbook of the geometry of {B}anach spaces, {V}ol. 2}, pages
  1459--1517. North-Holland, Amsterdam, 2003.

\bibitem{MR631090}
Andrzej Szankowski.
\newblock {$B({H})$} does not have the approximation property.
\newblock {\em Acta Math.}, 147(1-2):89--108, 1981.

\bibitem{Vergara}
I.~Vergara.
\newblock The {$p$}-approximation property for simple {L}ie groups with finite
  center.
\newblock {\em J. Funct. Anal.}, 273(11):3463--3503, 2017.

\bibitem{Wang_2025}
Hong Wang and Joshua Zahl.
\newblock Sticky kakeya sets and the sticky kakeya conjecture.
\newblock {\em Journal of the American Mathematical Society}, November 2025.

\bibitem{wang2025volumeestimatesunionsconvex}
Hong Wang and Joshua Zahl.
\newblock Volume estimates for unions of convex sets, and the kakeya set
  conjecture in three dimensions, 2025.

\bibitem{zbMATH03096429}
Antoni Zygmund.
\newblock Smooth functions.
\newblock {\em Duke Math. J.}, 12:47--76, 1945.

\end{thebibliography}

\end{document}